\documentclass[10pt,reqno]{amsart}
\usepackage{amssymb,latexsym}
\usepackage{amsmath,color}
\usepackage{amsthm}
\usepackage{epsfig}
\usepackage{graphicx}
\usepackage{titletoc}
\usepackage{dsfont}
\usepackage{nccbbb}
\usepackage{amsmath}
\usepackage{amssymb}
\usepackage{amssymb}
\usepackage{amstext}
\usepackage{amsopn}
\usepackage{amsbsy}
\usepackage{amscd}
\usepackage{amsxtra}
\usepackage{accents}
\usepackage{bbm}
\usepackage{yfonts}
\usepackage{bbm}
\usepackage[pagewise]{lineno}
\usepackage[dvipsnames]{xcolor} 
\definecolor{darkgreen}{rgb}{0.0, 0.5, 0.0}

\usepackage[colorlinks=true, linkcolor=blue, citecolor=magenta, urlcolor=cyan]{hyperref}

\numberwithin{equation}{section}

\usepackage{xparse}
\usepackage{microtype}

        \makeatletter
        \newcommand{\ovset}[3][0ex]{%
          \mathrel{\mathop{#3}\limits^{
            \vbox to#1{\kern0\ex@
            \hbox{$\scriptstyle#2$}\vss}}}}
        \makeatother

\NewDocumentCommand{\hideparbox}{O{c}mm+m}
 {
  \group_begin:
  \vbox_set:Nn \l_hideparbox_box
   {
    \use:c { @parboxrestore }
    \hsize=#3\scan_stop:
    \strut#4\par
   }
  \vbadness=\c_ten_thousand 
  \vbox_set_split_to_ht:NNn \l_hideparbox_box \l_hideparbox_box { #2 }
  \parbox[#1][#2]{#3}
   {
    \vbox_unpack:N \l_hideparbox_box
   }
  \group_end:
 }
\ExplSyntaxOff

\usepackage{cases}

\usepackage[T1]{fontenc}

\newtheorem{theorem}{Theorem}[section]
\newtheorem{proposition}[theorem]{Proposition}
\newtheorem{lemma}[theorem]{Lemma}
\newtheorem{corollary}[theorem]{Corollary}

\theoremstyle{definition}

\newtheorem{remark}{Remark}

\usepackage{esint}

\newcommand{\ds}{\displaystyle}

\newcommand{\eps}{\varepsilon}

\newcommand{\dxx}{\,\mathrm{d}{x}}

\newcommand{\e}{\varepsilon}

\usepackage[misc]{ifsym}

\begin{document}
\title[Boundary layers in multiplicatively nonlocal elliptic equations]{}
\small


\begin{center}
{\LARGE\bf Refined boundary layer asymptotics for elliptic equations\\ with multiplicative nonlocal effects
}\vspace{3mm}\\
\large Chiun-Chang Lee\footnote{Institute for Computational and Modeling Science,  National Tsing Hua University,  Hsinchu 30013, Taiwan.\\
\hspace*{6mm}\Letter\hspace*{1.5mm}{ lee2@mx.nthu.edu.tw }
} 
 $\cdot$ Sang-Hyuck Moon\footnote{Department of Mathematics and Institute of Mathematical Science, Pusan National
University, Busan 46241, Republic of Korea\\
{\hspace*{6mm}\Letter\hspace*{1.5mm}{ shmoon@pusan.ac.kr}}} 
$\cdot$ Wen Yang\footnote{Department of Mathematics, Faculty of Science and Technology, University of
Macau, Taipa, Macau.\\
{\hspace*{6mm}\Letter\hspace*{1.5mm}{wenyang@um.edu.mo}}
}

\end{center}
\vspace{-1.5cm}
\maketitle

\begin{abstract} 
We investigate singularly perturbed elliptic problems with multiplicative nonlocal diffusion terms subject to Robin boundary conditions. The diffusion depends on a global quantity of the solution, which introduces a nonlocal coupling between the global behavior of the solution and the boundary asymptotics. As the perturbation parameter tends to zero, we establish precise asymptotic expansions of the solutions that capture the structure of boundary layers coupled with the multiplicative nonlocal diffusion effect. Moreover, the interaction between the nonlocal diffusion and the boundary geometry manifests as refined higher-order terms wherein geometric quantities, such as the mean curvature, appear explicitly; our analysis thus quantifies the influence of global coupling on the boundary layer structure, extending classical singular perturbation theory to multiplicative nonlocal frameworks.
\vspace{2mm}\\
{\it Keywords.} Singular perturbation, Multiplicative nonlocality, Boundary layer structure, Asymptotic expansion, Geometry effect\\
{\bf AMS Subject Classification.} 35B25, 35J25, 35B40
\end{abstract}
\section{\bf Introduction and statement of the main results}
\noindent

In various physical and biological modelling contexts, certain singularly perturbed models describing boundary layer phenomena incorporate \emph{nonlocal} effects through global coefficients that depend on the solution and enter in a \emph{multiplicative} manner~\cite{CF2011,L2019,LMWY2025,TBL2006,Z2017}. In contrast, in additive nonlocal models (e.g., Fredholm integro-differential equations~\cite{CLW2025}), the nonlocal terms typically arise as lower-order sources. By comparison, multiplicative nonlocal models involve global quantities directly in the diffusion coefficient, thereby altering the principal part of the differential operator through a solution-dependent coefficient (cf.~\cite{DS2010}).

As the singular perturbation parameter tends to zero, multiplicative nonlocal effects induce nontrivial boundary layers, whose structure is determined by the interplay between the domain geometry and the global nonlocal dependence of the solution. From a theoretical perspective, combining nonlocality with singular perturbation brings major analytical challenges, notably the absence of a natural variational structure~\cite{CSH2017,CC2009,KS2015,L2021,L2019N}. The present work aims to develop a rigorous asymptotic framework for a specific class of singularly perturbed  elliptic equations with multiplicative nonlocal structures. 

Specifically, let $\Omega \subset \mathbb{R}^n$ be a bounded, connected domain with a smooth boundary $\partial\Omega$. We investigate the asymptotic behavior of solutions to the singularly perturbed elliptic equation
\begin{equation}\label{maineq}
\varepsilon^{2}\mathcal{A}\left( \fint_{\Omega} q(u)\,\mathrm{d}x \right) \Delta u = f(u) \quad \text{in } \Omega,
\end{equation}
where $q, f: \mathbb{R} \to \mathbb{R}$ and $\mathcal{A}: \mathbb{R} \to \mathbb{R}^{+}$ are given functions. Throughout this work, we adopt the notation $\fint_{\Omega} := \frac{1}{|\Omega|} \int_{\Omega}$ to denote the average integral, where $|\Omega|$ represents the Lebesgue measure of $\Omega$ in $\mathbb{R}^n$.  The solution $u$ depends on the singular perturbed parameter~$0<\varepsilon\ll1$, it should properly be denoted by $u_{\varepsilon}$; however, this subscript will be omitted whenever no confusion arises. For \eqref{maineq}, we consider the Robin boundary condition
\begin{equation}\label{bd}
u+\varepsilon\gamma\,\partial_{\vec n}u=b_0 \qquad \text{on }\partial\Omega.
\end{equation}
Here $\gamma \ge 0$ and the boudnary data~$b_0$ are constants independent of $\varepsilon$. The operator $\partial_{\vec n}=\vec n\cdot\nabla$ denotes the outward normal derivative on $\partial\Omega$. Notably, the balanced scaling of $\varepsilon$ in both \eqref{maineq} and \eqref{bd} plays a crucial role in our analysis, as it leads to a rich variety of asymptotic behaviors~\cite{f1973}. 

\subsection{Modeling background and assumptions}  The multiplicative nonlocal structure of \eqref{maineq} arises in a wide range of applications. Prototypical examples include Carrier-type equations, where the diffusion coefficient is modulated by the $\text{L}^p$ norm of the solution, representing effects such as total mass or nonlinear beam deflections. Similar structures appear in stationary Keller--Segel systems, where global coupling terms like $\int_{\Omega} u\text{e}^u \,\mathrm{d}x$ or $\int_{\Omega} u^p \,\mathrm{d}x$ ($u \ge 0$ and $p>1$) regulate chemotactic aggregation~\cite{CLWY2026, LWY2020}. Such models are also prevalent in electrochemistry and semiconductor theory, notably in nonlocal Poisson--Boltzmann and sinh-Gordon equations. In these contexts, the singular perturbation parameter $\varepsilon$ is typically associated with the nanometer-scale Debye length, and the resulting boundary layers characterize the formation of electrical double layers~\cite{s2012, s2021, WL2014}. 

These diverse physical contexts lead to nonlocal terms of the form 
$\mathcal{A}\left(\fint_\Omega q(u)\,\mathrm{d}x\right)$, 
which introduce a multiplicative nonlocal perturbation into the singularly perturbed problem. 
Here the normalization factor $|\Omega|^{-1}$ naturally arises from dimensionless formulations. 
From a mathematical viewpoint, however, this factor can be absorbed into the function $q$, 
for instance by replacing $q(u)$ with $|\Omega|\,q(u)$, without affecting the essential features of the analysis. To ensure a rigorous derivation of the refined asymptotic expansions, we impose the following structural and regularity assumptions on the nonlinearities $q, \mathcal{A},$ and $f$:
\begin{equation}\label{q-A}
q\in \mathrm{C}^{1}(\mathbb{R};\mathbb{R}), \qquad \mathcal{A}\in \mathrm{C}^{3}(\mathbb{R};\mathbb{R}^{+}),
\end{equation}
and 
\begin{equation}\label{assume-f}
f \in \mathrm{C}^2(\mathbb{R}; \mathbb{R}), \quad \inf_{\mathbb{R}}f' > 0, \quad \text{and} \quad f(0) = 0.
\end{equation}
The conditions on $f$ are consistent with the aforementioned physical models, while the regularity requirements for $q$ and $\mathcal{A}$ provide the necessary analytical framework for capturing the delicate interaction between the boundary layer profiles and the global nonlocal effects as $0 < \varepsilon \ll 1$.

Our primary interest lies in boundary layer phenomena. Under the Robin boundary condition~\eqref{bd}, solutions $u$ develop boundary layers as $\eps\downarrow0$, while the quantity $\fint_{\Omega}q(u)\,\mathrm{d}x$ acts as an implicit parameter depending on both $\varepsilon$ and $\gamma$. Consequently, the asymptotic behavior of the nonlocal coefficient and that of the solution are strongly coupled. Since boundary layer structures are known to depend sensitively on the geometry of the domain, we aim to investigate how geometric features of $\Omega$, such as curvature effects, enter the refined asymptotics. In particular, we focus on the uniqueness of solutions for sufficiently small $\varepsilon$ and on establishing higher-order asymptotic expansions that explicitly reveal the influence of domain geometry.

\subsection{Uniqueness}

We first establish the uniqueness of the solution $u_\varepsilon$ to \eqref{maineq}--\eqref{bd} and characterize its leading-order asymptotic behavior as $\eps \downarrow 0$.

\begin{proposition}\label{existence}
Let $\Omega \subset \mathbb{R}^n$ be a connected, bounded domain with a smooth boundary $\partial\Omega$, and let $\gamma \geq 0$. Under the assumptions \eqref{q-A} and \eqref{assume-f}, there exists a  constant $\boldsymbol{\mathfrak{e}^*} > 0$ such that for each $\varepsilon \in (0, \boldsymbol{\mathfrak{e}^*})$, the problem \eqref{maineq} with boundary condition \eqref{bd} admits a unique solution $u \in \mathrm{C}^{2,\alpha}(\Omega) \cap \mathrm{C}^{1,\alpha}(\overline{\Omega})$, for some $\alpha\in(0,1)$. This solution satisfies the following exponential decay estimate:
    \begin{align}\label{int-est-u}
        |u(x)| \le b_0 \exp \left( - \frac{C_* \mathrm{dist}(x, \partial\Omega)}{\varepsilon} \right), \quad \text{for } x \in \Omega,
    \end{align}
where $C_*>0$ is a constant independent of $\eps$, and  $\mathrm{dist}(x, \partial\Omega)$ denotes the distance from $x$ to the boundary $\partial\Omega$. Moreover, 
the leading-order asymptotics of $u$ 
as $0<\eps\ll1$ is given by
    \begin{align}\label{u-to-W}
        \max_{x \in \overline{\Omega}} \left| u(x) - W \left( \frac{\mathrm{dist}(x, \partial\Omega)}{\varepsilon} \right) \right| \le C_{**} \varepsilon,
    \end{align}
    where $W$ is the unique solution to the ordinary differential equation
    \begin{align}\label{ode-W}
    \begin{cases}
    \mathcal{A}(q(0))W''(t) = f(W(t)), & t > 0, \\
    W(0) - \gamma W'(0) = b_0, & \lim\limits_{t \to \infty} W(t) = 0.
    \end{cases}
    \end{align}
\end{proposition}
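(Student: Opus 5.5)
The plan is to reduce the nonlocal problem to a one-parameter family of local problems and then close the loop with a scalar fixed-point argument. For each constant $\lambda>0$ consider the local problem
\begin{equation*}
\varepsilon^2\lambda\,\Delta v = f(v)\ \text{in }\Omega,\qquad v+\varepsilon\gamma\,\partial_{\vec n}v=b_0\ \text{on }\partial\Omega .
\end{equation*}
Since $\inf f'>0$ and $f(0)=0$, the operator $v\mapsto -\varepsilon^2\lambda\Delta v+f(v)$ is strictly monotone. The Robin condition with $\gamma\ge0$ has the correct sign. Hence sub/supersolutions (the constants $0$ and $b_0$, assuming say $b_0\ge0$; the other sign is symmetric) together with the comparison principle give a unique solution $v_\lambda\in \mathrm{C}^{2,\alpha}(\Omega)\cap\mathrm{C}^{1,\alpha}(\overline\Omega)$ with $0\le v_\lambda\le b_0$.

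A solution $u$ of \eqref{maineq}--\eqref{bd} is then exactly $u=v_\lambda$ with $\lambda$ a fixed point of the scalar map
\begin{equation*}
\Phi_\varepsilon(\lambda):=\mathcal{A}\Big(\fint_\Omega q(v_\lambda)\,\mathrm{d}x\Big).
\end{equation*}
Existence and uniqueness of $u$ thus reduce to existence and uniqueness of a fixed point of $\Phi_\varepsilon$ on a compact interval $\Lambda$ around $\mathcal{A}(q(0))$.

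The first key step is the decay estimate \eqref{int-est-u}, uniform in $\lambda\in\Lambda$. The bound $0\le v_\lambda\le b_0$ already confines every possible fixed point to a compact set of $\lambda$ values, since $\mathcal{A}$ is continuous and positive on the bounded range of $\fint q(v)$. On this set, the supersolution $b_0\exp(-C_*d(x)/\varepsilon)$ works, with $d$ a smoothed distance function and $C_*^2\lambda\sup|\Delta d|\ll\inf f'$. Near $\partial\Omega$ the function $f(v)\ge (\inf f')v$ dominates $\varepsilon^2\lambda\Delta$ applied to the exponential, up to $O(\varepsilon)$ curvature errors. The Robin condition is compatible because $\partial_{\vec n}$ of this barrier is positive, of order $b_0C_*/\varepsilon$.

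From the decay estimate we get $\fint_\Omega q(v_\lambda)=q(0)+O(\varepsilon)$ uniformly in $\lambda$. Hence $\Phi_\varepsilon$ maps $\Lambda$ into an $O(\varepsilon)$-neighbourhood of $\mathcal{A}(q(0))$, and continuity of $\lambda\mapsto v_\lambda$ gives existence of a fixed point by the intermediate value theorem.

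Uniqueness is the main obstacle: we must show $|\Phi_\varepsilon'(\lambda)|<1$, i.e. that $\Phi_\varepsilon$ is a contraction. Differentiating in $\lambda$, $w=\partial_\lambda v_\lambda$ solves
\begin{equation*}
\varepsilon^2\lambda\Delta w-f'(v_\lambda)w=-\varepsilon^2\Delta v_\lambda=-\lambda^{-1}f(v_\lambda),
\end{equation*}
with homogeneous Robin data. The right-hand side is bounded by $Ce^{-C_*d/\varepsilon}$, so a barrier of the same type gives $|w|\le C e^{-cd/\varepsilon}$. Consequently
\begin{equation*}
|\Phi_\varepsilon'(\lambda)|\le \|\mathcal{A}'\|_{L^\infty}\,\|q'\|_{L^\infty}\fint_\Omega|w|\,\mathrm{d}x\le C\varepsilon,
\end{equation*}
which is less than $1$ for $\varepsilon<\mathfrak{e}^*$. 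If differentiability of $\lambda\mapsto v_\lambda$ is awkward, we would instead compare two solutions $v_{\lambda_1},v_{\lambda_2}$ directly through the same linear equation with a difference quotient.

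Finally, for \eqref{u-to-W}, we note that $\lambda_\varepsilon=\mathcal{A}(q(0))+O(\varepsilon)$. In a tubular neighbourhood of $\partial\Omega$, written in Fermi coordinates $(y,t=d/\varepsilon)$, the Laplacian reads
\begin{equation*}
\varepsilon^{-2}\partial_t^2-\varepsilon^{-1}(n-1)H\,\partial_t+\dots,
\end{equation*}
so $W(t)$ solves the equation up to an $O(\varepsilon)$ error that is exponentially weighted. The same holds for the Robin condition, and the error coming from $\lambda_\varepsilon-\mathcal{A}(q(0))$ is also $O(\varepsilon)$. Sub/supersolutions of the form $W(d/\varepsilon)\pm C\varepsilon e^{-cd/\varepsilon}$, cut off away from the boundary where both $u$ and $W$ are exponentially small, combined with the comparison principle, yield the $C_{**}\varepsilon$ bound. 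Existence and uniqueness of $W$ itself follow from phase-plane analysis: the first integral is
\begin{equation*}
W'=-\sqrt{2F(W)/\mathcal{A}(q(0))},\qquad F(s)=\int_0^s f,
\end{equation*}
and the Robin condition becomes a monotone scalar equation for $W(0)$.
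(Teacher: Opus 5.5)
Your proposal is correct and follows the paper's route. The paper likewise solves the local problems $\theta^2\Delta v=f(v)$ with the same Robin condition (your $\theta^2=\varepsilon^2\lambda$), and reduces \eqref{maineq}--\eqref{bd} to the zeros of $\boldsymbol{\mathsf{M}}_{\varepsilon}(\theta)=\theta^2-\varepsilon^2\mathcal{A}\big(\fint_\Omega q(v_{\theta,\varepsilon})\,\mathrm{d}x\big)$. It then gets existence from the intermediate value theorem, uniqueness from the sign of $\mathrm{d}\boldsymbol{\mathsf{M}}_{\varepsilon}/\mathrm{d}\theta$ on the $O(\varepsilon)$-window containing all zeros (equivalent to your $\Phi_\varepsilon'<1$), and the $W$-approximation from a maximum-principle comparison.

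The differences are only technical. You bound $\partial_\lambda v_\lambda$ pointwise by an exponential barrier, which gives $|\Phi_\varepsilon'|=O(\varepsilon)$; this is sharper than the paper's energy estimate $\fint_\Omega\frac{\mathrm{d}v_{\theta,\varepsilon}}{\mathrm{d}\theta}\,\mathrm{d}x\lesssim\theta^{-1/2}$, which yields only $O(\sqrt{\varepsilon})$ in your variable. You also localize the comparison for $u-W(\delta/\varepsilon)$ to the tubular neighbourhood where $\delta$ is smooth, whereas the paper applies the maximum principle to $(u-W(\delta/\varepsilon))^2$ on all of $\Omega$.
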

In Section~\ref{prop1-sec}, we refer the reader to Lemma~\ref{lem-W} for the properties of $W$ and present the proof of Proposition~\ref{existence}. 

We outline the heuristic idea behind Proposition~\ref{existence}. To establish the uniqueness of $u$, we introduce a family of solutions to the corresponding \textit{local-type} equations. For each fixed ${\theta} > 0$, let $v_{{\theta},\varepsilon}$ satisfy
\begin{align}\label{vy-eq}
\begin{cases}
    {\theta}^2 \Delta v_{{\theta},\varepsilon} = f(v_{{\theta},\varepsilon}) & \text{in } \Omega, \\
    v_{{\theta},\varepsilon} + \gamma \varepsilon \partial_{\vec{n}} v_{{\theta},\varepsilon} = b_0 & \text{on } \partial\Omega.
\end{cases}
\end{align}
Since $f:\mathbb{R}\to\mathbb{R}$ is strictly increasing (cf.~\eqref{assume-f}) and $\gamma \ge 0$, problem~\eqref{vy-eq} admits a unique solution $v_{{\theta},\varepsilon}$ for each ${\theta}>0$ and $\varepsilon>0$. Moreover, the comparison principle yields $\min\{0,b_0\}\leq v_{{\theta},\varepsilon}\leq \max\{0,b_0\}$ in $\overline{\Omega}$. In view of this uniqueness, we introduce the consistency mapping
$\boldsymbol{\mathsf{M}}_{\varepsilon}\colon (0,\infty)\to\mathbb{R}$ defined by
\begin{align}\label{vy-map}
\boldsymbol{\mathsf{M}}_{\varepsilon}({\theta})
:=
{\theta}^2
-
\varepsilon^2
\mathcal{A}\left(
\fint_\Omega q(v_{{\theta},\varepsilon})\,\mathrm{d}x
\right).
\end{align}
Consequently, any solution $u$ of problem~\eqref{maineq}--\eqref{bd} corresponds to a root of the equation~$\boldsymbol{\mathsf{M}}_{\varepsilon}({\theta})=0.$

Furthermore, by standard elliptic theory (cf.~\cite[Theorem~1.1]{T2000}), the solution $v_{{\theta},\varepsilon}$ depends smoothly on the parameter ${\theta}>0$. Consequently, $\boldsymbol{\mathsf{M}}_{\varepsilon}$ is a $\mathrm{C}^1$ function on $(0,\infty)$. To establish the uniqueness of $u$ in the problem~\eqref{maineq}--\eqref{bd}, we analyze the derivative of $\boldsymbol{\mathsf{M}}_{\varepsilon}$ with respect to $\theta$. In particular, by determining the sign of this derivative for a sufficiently small $\varepsilon$, we show that there exists a positive constant $\boldsymbol{\mathfrak{e}^*}$ such that for every $\varepsilon \in (0,\boldsymbol{\mathfrak{e}^*})$, the function $\boldsymbol{\mathsf{M}}_{\varepsilon}$ has a unique zero, denoted by ${\theta}(\varepsilon)$. Consequently, $u=v_{{\theta}(\varepsilon),\varepsilon}$ is the unique solution of \eqref{maineq}--\eqref{bd} for all sufficiently small $\varepsilon$. The asymptotic estimates \eqref{int-est-u} and \eqref{u-to-W} then follow directly from the comparison principle.

 We remark that if either of the following conditions holds:
\begin{equation}\label{ass-i-ii}
\mathrm{(i)}\, \mathcal{A}' \le 0 \text{ and } q' \ge 0; \quad \text{or} \quad \mathrm{(ii)}\, \mathcal{A}' \ge 0 \text{ and } q' \le 0,
\end{equation}
Then we can apply the comparison principle to show that problem \eqref{maineq}--\eqref{bd} admits a unique solution $u$ for all $\varepsilon > 0$ (see Remark~\ref{rk-prop1}). However, if neither (i) nor (ii) is satisfied, the comparison principle alone is no longer sufficient to ensure uniqueness. Since the primary objective of this work is to characterize the asymptotic behavior of $u$ as $\varepsilon \to 0^+$, we shall not further discuss the uniqueness or multiplicity of solutions for general functions $\mathcal{A}$ and $q$ when $\varepsilon$ is not small.

\subsection{Preliminaries on nonlocal and geometric effects}\label{sec-sp}
Before presenting the main theorems, we describe the nonlocal and geometric effects governing the asymptotic behavior of the unique solution $u$ in problem \eqref{maineq}--\eqref{bd} as $\varepsilon \downarrow 0$. 
Since the analysis involves several auxiliary quantities, we introduce the necessary notation and preliminary results in this section. 
The main theorems will be stated in Section~\ref{s-mthm}.

 Since $b_0 = 0$ leads to the trivial solution $u \equiv 0$ for $\varepsilon \in (0, \boldsymbol{\mathfrak{e}^*})$, we assume, without loss of generality, that
\begin{align}\label{a-0}
    b_0 > 0.
\end{align}
It follows from \eqref{u-to-W} and \eqref{ode-W} that $u$ converges uniformly to $b_*$ on $\partial\Omega$ as $\varepsilon \downarrow 0$, where the boundary value $b_* := W(0) \in (0, b_0]$ is the unique solution of the algebraic equation
\begin{align}\label{b0}
b_* + \gamma \sqrt{\frac{2F(b_*)}{\mathcal{A}(q(0))}} = b_0,
\end{align}
with
\begin{equation}\label{big-F}
F(t):=\int_0^t f(s)\,\mathrm{d}s.
\end{equation}

This relation is derived by substituting the boundary condition $W(0) - \gamma W'(0) = b_0$ into the first integral of \eqref{ode-W}, which yields the explicit derivative $W'(0)=-\sqrt{\frac{2F(W(0))}{\mathcal{A}(q(0))}}$.
Consequently, $u$ develops a boundary layer near the boundary $\partial\Omega$ as $\varepsilon$ tends to zero.

Note that the coefficient $\mathcal{A} \!\left( \fint_\Omega q(u)\,\mathrm{d}x \right)$ depends on both $\varepsilon$ and the domain $\Omega$ through the nonlocal dependence on the solution $u$. The leading-order asymptotics \eqref{u-to-W} alone is insufficient to capture how the nonlocal term and the domain geometry influence the boundary layer structure of $u$. Indeed, although the estimate \eqref{int-est-u} implies
$\lim_{\eps\downarrow0}
\mathcal{A}\left(\fint_\Omega q(u)\,\mathrm{d}x\right)
=
\mathcal{A}(q(0)),$
this limit does not distinguish $u$ from the solution $\widetilde{u}$ of the corresponding local-type equation $\varepsilon^2 \mathcal{A}(q(0))\Delta\widetilde{u}=f(\widetilde{u}).$ Therefore, it is necessary to characterize the nonlocal perturbation by deriving a refined asymptotic expansion of $\mathcal{A}\left(\fint_\Omega q(u)\,\mathrm{d}x\right)$ beyond the constant $\mathcal{A}(q(0))$, which is essential for revealing the geometric effects in the boundary layer.

To capture the influence of the domain geometry on the asymptotic structure of $u$, we derive a higher-order expansion of $\fint_\Omega q(u)\,\mathrm{d}x$ as $\varepsilon \downarrow 0$. For this purpose, we introduce the geometric properties and the quantities appearing in the asymptotic expansions that will be used throughout the analysis.

\begin{itemize}
\item[\bf(G)] {\bf Geometric Properties Review.} Denote by $|\partial\Omega|$ the $(N-1)$-dimensional surface measure of $\partial\Omega$.  For $x\in\partial\Omega$, let $\kappa_i(x)$, $i=1,\ldots,N-1$, be the principal curvatures at $x$. The mean curvature of $\partial\Omega$ is given by
\begin{align*}
\mathcal{H}_{\partial\Omega}(x):=\frac{1}{N-1}\sum_{i=1}^{N-1}\kappa_i(x), \quad x\in\partial\Omega.
\end{align*}
It is known (cf. \cite[Theorem~1]{F1984} and \cite[Lemma~14.16]{GT2001}) that if $\partial\Omega\in\mathrm{C}^{4}$, then there exists $d_0>0$ depending on $\Omega$ such that for each $d\in(0,d_0)$, there hold $\Gamma_d\in\mathrm{C}^{4}$, where we define
\begin{align}\label{o-n3}
    \Omega_d:= \{x \in \Omega: \ \text{dist}(x,\partial\Omega)\in(0,d)\},\,\,\Gamma_d:= \{x \in \overline{\Omega}: \, \text{dist}(x,\partial\Omega)=d\}.
\end{align}
Moreover, we have that:
\begin{itemize}
    \item[\bf(i)] For each $x\in\Omega_{d_0}$, the nearest point $\sigma(x)\in\partial\Omega$ of $x$ to the boundary~$\partial\Omega$  is unique, and
    \begin{align}\label{x-to=sigma}
    x=\sigma(x)-\mathrm{dist}(x,\partial\Omega)\vec{n}(\sigma(x)).    
    \end{align}
    \item[\bf(ii)]  The mapping $x\mapsto(\delta(x),\sigma(x))$ is a $\mathrm{C}^{4}$-diffeomorphism from $\Omega_{d_0}$ onto $(0,d_0)\times\partial\Omega$, where $\delta(x)=\mathrm{dist}(x,\partial\Omega)$. Moreover, we choose
    \begin{align}\label{d-star}
        d_{*}=\min\left\{d_0,\frac{1}{1 + 2 \sup\limits_{\sigma \in \partial\Omega}
\max\limits_{1 \le i \le N-1} |\kappa_i(\sigma)|}\right\}\in(0,d_0].
    \end{align}
    Then, in terms of the curvilinear coordinates~$(\delta,\sigma)$, the Laplace operator in $\Omega_{d_{*}}$ is definitely represented as 
    \begin{align}\label{Laplace}
        \Delta=\frac{\partial^2}{\partial\delta^2}-(N-1)\mathcal{H}_{\Gamma_{\delta}}(\sigma)\frac{\partial}{\partial\delta}+\Delta_{\Gamma_{\delta}},\,\,(\delta,\sigma)\in(0,d_{*})\times\partial\Omega,
    \end{align}
    where
    \begin{align}\label{me-H}
        \mathcal{H}_{\Gamma_{\delta}}(\sigma):=\frac{1}{N-1}\sum_{i=1}^{N-1}\frac{\kappa_i(\sigma)}{1-\kappa_i(\sigma)\delta}
    \end{align}
    is the mean curvature of $\Gamma_{\delta}$ measured at $\sigma-\delta\vec{n}(\sigma)$, and $\Delta_{\Gamma_{\delta}}$ stands for the Beltrami--Laplacian on $\Gamma_{\delta}$.
\end{itemize}
\item[\bf(N)] 
We introduce several quantities and notations arising in the asymptotic expansions.

\begin{itemize}
\item[\bf (i)] 
Let $\Phi$ and $\Psi$ be the unique solutions to the following ODE problems:
\begin{equation}\label{Phi-new}
\left\{
\begin{aligned}
&\mathcal{A}(q(0))\Phi''(t)= f'(W(t))\Phi(t)+\mathcal{A}(q(0))f(W(t)), \quad t>0,\\
&\Phi(0)-\gamma \Phi'(0)=0, \quad \text{and} \quad \Phi(t)\to 0 \ \text{as } t\to\infty,
\end{aligned}
\right.
\end{equation}
and
\begin{equation}\label{Psi-new}
\left\{
\begin{aligned}
&\mathcal{A}(q(0))\Psi''(t)= f'(W(t))\Psi(t)-\sqrt{2\mathcal{A}(q(0))F(W(t))}, \quad t>0,\\
&\Psi(0)-\gamma \Psi'(0)=0, \quad \text{and} \quad \Psi(t)\to 0 \ \text{as } t\to\infty,
\end{aligned}
\right.
\end{equation}
respectively. The properties of $\Phi$ and $\Psi$ will be established in Lemma~\ref{lem-PhiPsi}.

\item[\bf (ii)] 
Define the function $\mathcal{Q}_F$ by
\begin{align}\label{Q-F}
\mathcal{Q}_F(t):=\int_0^t\frac{q(s)-q(0)}{\sqrt{2F(s)}}\,\mathrm{d}s, \quad t\in[0,\infty),
\end{align}
where $F$ is given in \eqref{big-F}. By \eqref{assume-f}, we have $\mathcal{Q}_F \in C^1([0,\infty))$ and
$\mathcal{Q}_F'(0)=\frac{q'(0)}{\sqrt{f'(0)}}.$ Furthermore, define
\begin{align}\label{I-J}
\begin{cases}
\displaystyle \boldsymbol{\mathcal{I}}_{W,\Phi}:= \int_0^{\infty} q'(W(s))\Phi(s)\,\mathrm{d}s,\\[6pt]
\displaystyle \boldsymbol{\mathcal{J}}_{W,\Psi}:= \int_0^\infty\left(\sqrt{\mathcal{A}(q(0))}\,\mathcal{Q}_F(W(s)) - q'(W(s))\Psi(s)\right)\mathrm{d}s,
\end{cases}
\end{align}
where $W$, $\Phi$, and $\Psi$ are the unique solutions to \eqref{ode-W}, \eqref{Phi-new}, and \eqref{Psi-new}, respectively. 
The quantities $\boldsymbol{\mathcal{I}}_{W,\Phi}$ and $\boldsymbol{\mathcal{J}}_{W,\Psi}$ are well defined in view of the estimates \eqref{w3w} and \eqref{cw-exp}.

\item[\bf (iii)] We write $a\lesssim b$ if there exists a positive constant $C$ such that $a\leq Cb$, where $C$ denotes a generic constant that may vary from line to line. We use the standard big-$O$ notation as $\varepsilon \downarrow 0$, and write $o_{\varepsilon}(1)$ for quantities that vanish in this limit, without specifying their precise rate.
\end{itemize}
\end{itemize}

%

\subsection{Statement of the main result}\label{s-mthm}

To derive refined asymptotics of $u$ as $\varepsilon \downarrow 0$, we establish a detailed asymptotic expansion of $\fint_\Omega q(u)\,\mathrm{d}x$. 
The following theorem identifies a key novel feature in this expansion, namely that both the boundary mean curvature $\mathcal{H}_{\partial\Omega}$ and the geometric quantity  $\frac{|\partial\Omega|}{|\Omega|}$ (surface-to-volume ratio) enter explicitly.

\begin{theorem}[Nonlocal perturbation involving geometric effects]\label{thm-nonlocal} Under the same hypotheses as in Proposition~\ref{existence}, we assume, without loss of generality, that \eqref{a-0} holds. Then, for arbitrarily small $\eps>0$ we have the following regularly perturbed expansion involving the domain geometry:
\begin{equation}\label{q-three}
  \begin{aligned}
    \fint_{\Omega} q(u) \dxx=&\,q(0)+\eps\frac{|\partial\Omega|}{|\Omega|}\sqrt{\mathcal{A}(q(0))}\mathcal{Q}_F(b_*)\\
    &\,-\eps^2\frac{|\partial\Omega|^2}{|\Omega|^2}\left(\frac{\mathcal{A}'(q(0))\mathcal{Q}_F(b_*)}{\left(\mathcal{A}(q(0))\right)^{\frac32}}{\boldsymbol{\mathcal{I}}}_{W,\Phi}+(N-1)\frac{|\Omega|\int_{\partial\Omega}\mathcal{H}_{\partial\Omega}\,\mathrm{d}\sigma}{|\partial\Omega|^2}{\boldsymbol{\mathcal{J}}}_{W,\Psi}+\boldsymbol{\mathrm{o}}_\eps\right),
\end{aligned}  
\end{equation}
where $b_*\in(0,b_0]$ is uniquely determined by \eqref{b0}, and  all quantities in \eqref{q-three} are defined by (G) and (N).
\end{theorem}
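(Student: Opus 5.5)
The plan is to write the nonlocal coefficient as $\theta(\varepsilon)^2=\varepsilon^2\mathcal{A}(\fint_\Omega q(u)\,\mathrm{d}x)$, so that $u=v_{\theta(\varepsilon),\varepsilon}$ as in the proof of Proposition~\ref{existence}. Set
\[
\lambda_\varepsilon:=\mathcal{A}\Big(\fint_\Omega q(u)\,\mathrm{d}x\Big)=\mathcal{A}(q(0))+\mu_\varepsilon ,
\]
where $\mu_\varepsilon\to0$ by \eqref{int-est-u}. I will expand $u$ in the boundary layer to second order in $\varepsilon$, with two sources of correction. The first is the curvature term in \eqref{Laplace}. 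The second is the perturbation $\mu_\varepsilon$ of the diffusion coefficient. After that, $\mu_\varepsilon$ is determined self-consistently.

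\textbf{Step 1 (inner expansion).} In the coordinates $(\delta,\sigma)$ I put $t=\delta/\varepsilon$ and look for
\[
u\approx W_{\lambda_\varepsilon}(t)+\varepsilon(N-1)\mathcal{H}_{\partial\Omega}(\sigma)\,\widetilde\Psi(t),
\]
where $W_\lambda$ solves \eqref{ode-W} with $\mathcal{A}(q(0))$ replaced by $\lambda$. Differentiating in $\lambda$ at $\lambda=\mathcal{A}(q(0))$ gives $W_{\lambda}=W+\mu_\varepsilon\,\partial_\lambda W+O(\mu_\varepsilon^2)$. The function $\partial_\lambda W$ solves the linearized equation with source $-f(W)/\lambda$. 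Comparing with \eqref{Phi-new}, I expect $\partial_\lambda W=-\Phi/\mathcal{A}(q(0))^2$, up to the normalization. The curvature corrector must satisfy
\[
\mathcal{A}(q(0))\widetilde\Psi''=f'(W)\widetilde\Psi+\mathcal{A}(q(0))W'.
\]
Since $W'=-\sqrt{2F(W)/\mathcal{A}(q(0))}$, this is \eqref{Psi-new} up to a factor, with $\widetilde\Psi=\Psi/\sqrt{\mathcal{A}(q(0))}$. To justify the expansion I would build sub- and supersolutions of the form
\[
W_{\lambda_\varepsilon}(\delta/\varepsilon)+\varepsilon(N-1)\mathcal{H}\widetilde\Psi\pm C(\varepsilon^2+\varepsilon|\mu_\varepsilon|)\,e^{-c\delta/\varepsilon},
\]
cut off beyond $d_*$. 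The Robin condition \eqref{bd} is handled by the $\gamma$-conditions already built into the ODEs, and comparison works because $f'>0$. I take the existence and decay of $\Phi,\Psi$ from Lemma~\ref{lem-PhiPsi}.

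\textbf{Step 2 (integrate).} Using the volume element $\prod_i(1-\kappa_i\delta)\,\mathrm{d}\delta\,\mathrm{d}\sigma$ and exponential decay, I compute
\[
\int_\Omega\big(q(u)-q(0)\big)\,\mathrm{d}x
=\varepsilon\int_{\partial\Omega}\int_0^\infty\big(q(u)-q(0)\big)\big(1-\varepsilon(N-1)\mathcal{H}\,t+O(\varepsilon^2t^2)\big)\,\mathrm{d}t\,\mathrm{d}\sigma .
\]
The leading term is
\[
\varepsilon|\partial\Omega|\int_0^\infty\big(q(W_\lambda)-q(0)\big)\,\mathrm{d}t
=\varepsilon|\partial\Omega|\sqrt{\lambda}\,\mathcal{Q}_F(b_\lambda),
\]
obtained by the substitution $s=W(t)$ with $\mathrm{d}t=-\sqrt{\lambda}\,\mathrm{d}s/\sqrt{2F(s)}$. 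I expect the $\lambda$-dependence of $b_\lambda$ and $\sqrt\lambda$ to be exactly what the term $\int q'(W)\partial_\lambda W$ accounts for, so I would keep the form $\int q'(W)\Phi$ rather than re-simplifying. The curvature contributions come from two places: the Jacobian term $-\varepsilon(N-1)\mathcal{H}\int t\,(q(W)-q(0))\,\mathrm{d}t$ and the corrector $\varepsilon(N-1)\mathcal{H}\int q'(W)\widetilde\Psi$. These should combine into $-\varepsilon^2(N-1)\int_{\partial\Omega}\mathcal{H}\,\boldsymbol{\mathcal{J}}_{W,\Psi}$. To see this, integrate by parts:
\[
\int_0^\infty t\,\big(q(W)-q(0)\big)\,\mathrm{d}t=\int_0^\infty\sqrt{\mathcal{A}(q(0))}\,\mathcal{Q}_F(W(s))\,\mathrm{d}s,
\]
which holds because $\frac{\mathrm{d}}{\mathrm{d}t}\big[\sqrt{\mathcal{A}(q(0))}\,\mathcal{Q}_F(W)\big]=-(q(W)-q(0))$. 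This explains the shape of $\boldsymbol{\mathcal{J}}_{W,\Psi}$.

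\textbf{Step 3 (self-consistency).} The first-order result gives $\mu_\varepsilon=\mathcal{A}'(q(0))\,\varepsilon\frac{|\partial\Omega|}{|\Omega|}\sqrt{\mathcal{A}(q(0))}\,\mathcal{Q}_F(b_*)+o(\varepsilon)$, obtained by Taylor expanding $\mathcal{A}\in\mathrm{C}^3$. Substituting this into the $\mu_\varepsilon$-term $\varepsilon\frac{|\partial\Omega|}{|\Omega|}\mu_\varepsilon\int q'(W)\partial_\lambda W$ produces the $\mathcal{A}'\mathcal{Q}_F\boldsymbol{\mathcal{I}}_{W,\Phi}$ term with the factor $\mathcal{A}^{-3/2}$.

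The main obstacle is Step 1. The comparison functions must be controlled uniformly, to within $o(\varepsilon^2)$ in $L^1$ after averaging, even though $\lambda_\varepsilon$ itself depends on $u$. I would break this circularity with a bootstrap. First get $\mu_\varepsilon=O(\varepsilon)$ from the crude estimate \eqref{u-to-W}. Then run the barrier construction for fixed $\lambda$ near $\mathcal{A}(q(0))$, with constants uniform in $\lambda$. Only at the end insert $\lambda=\lambda_\varepsilon$. The tangential Laplacian $\Delta_{\Gamma_\delta}$ acting on $\mathcal{H}(\sigma)\widetilde\Psi$ contributes only $O(\varepsilon^2)$ to the residual, so it is harmless.
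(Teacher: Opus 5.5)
Your proposal is correct and follows the same scheme as the paper. Both use correctors built from $\Phi$ (for the shift of the nonlocal coefficient) and $\Psi$ (for the curvature term in \eqref{Laplace}), a maximum-principle bound on the remainder in $\Omega_{d_*}$, integration via the tube expansion \eqref{coarea}, and a self-consistent determination of $B_\eps=\fint_\Omega q(u)\,\mathrm{d}x$.

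There are three technical differences.
\begin{itemize}
\item \emph{Profile.} The paper (Lemma~\ref{lem2}) uses the linearized profile $W+\big(\mathcal{A}(B_\eps)^{-1}-\mathcal{A}(q(0))^{-1}\big)\Phi$. You keep the exact profile $W_{\lambda_\eps}$ and linearize only in the scalar $\sqrt{\lambda}\,\mathcal{Q}_F(b_\lambda)$. Your identification $\partial_\lambda W=-\Phi/\mathcal{A}(q(0))^2$ is exact, including the boundary condition and decay. Hence $\frac{\mathrm{d}}{\mathrm{d}\lambda}\big[\sqrt\lambda\,\mathcal{Q}_F(b_\lambda)\big]=-\boldsymbol{\mathcal{I}}_{W,\Phi}/\mathcal{A}(q(0))^2$ at $\lambda=\mathcal{A}(q(0))$.
\item \emph{Remainder bound.} The paper applies the maximum principle to $U_\eps^2$, which gives only a uniform $O(\eps^2)$ bound with no decay in $\delta/\eps$. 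That is why it needs the intermediate scale $\eps\ll d_\eps\ll\eps^{2/3}$ and carries the error $O(\eps^2)|\Omega_{d_\eps}|$ in \eqref{qB*}. Your barriers with weight $\mathrm{e}^{-c\delta/\eps}$, $c>0$ small, work because every residual term decays exponentially in $\delta/\eps$ by \eqref{w3w} and \eqref{cw-exp}. The remainder is then localized in the layer and integrates to $O(\eps^3)$, so no $d_\eps$ is needed.
\item \emph{Self-consistency.} The paper inserts the ansatz \eqref{b(i)} and matches coefficients. Your bootstrap ($\mu_\eps=O(\eps)$, then first order, then second order) actually derives the expansion rather than presupposing its form.
\end{itemize}
The price of your route is that you need $\mathrm{C}^1$ dependence of $b_\lambda$ and $W_\lambda$ on $\lambda$, with exponential decay uniform for $\lambda$ near $\mathcal{A}(q(0))$. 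This follows easily from \eqref{b0} and the first integral.

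One slip needs correcting. The curvature-corrector equation you wrote, $\mathcal{A}(q(0))\widetilde\Psi''=f'(W)\widetilde\Psi+\mathcal{A}(q(0))W'$, is exactly \eqref{Psi-new}, because $\mathcal{A}(q(0))W'=-\sqrt{2\mathcal{A}(q(0))F(W)}$. So $\widetilde\Psi=\Psi$, not $\Psi/\sqrt{\mathcal{A}(q(0))}$. With your stated normalization the corrector would contribute $\mathcal{A}(q(0))^{-1/2}\int_0^\infty q'(W)\Psi\,\mathrm{d}t$. That term does not combine with $\sqrt{\mathcal{A}(q(0))}\int_0^\infty\mathcal{Q}_F(W)\,\mathrm{d}t$ into $\boldsymbol{\mathcal{J}}_{W,\Psi}$ unless $\mathcal{A}(q(0))=1$. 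Your Step~2 silently uses $\widetilde\Psi=\Psi$, which is the correct choice.
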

It should be emphasized that \eqref{q-three} represents the ``best-fit form,'' since both $\frac{|\partial\Omega|}{|\Omega|}$ and the boundary mean curvature $\mathcal{H}_{\partial\Omega}$ carry the physical dimension of inverse length, whereas the quantity $\frac{|\Omega|}{|\partial\Omega|^2}\int_{\partial\Omega}\mathcal{H}_{\partial\Omega}\,\mathrm{d}\sigma$
is dimensionless. More precisely, let $L$ denote the physical dimension of length, and let $[\,\cdot\,]$ indicate the dimension of a given physical quantity. Then $[|\Omega|] = L^N,~ [|\partial\Omega|] = L^{N-1},~ [\mathcal{H}_{\partial\Omega}] = L^{-1}.$ It follows that
$\left[\frac{|\Omega|\int_{\partial\Omega}\mathcal{H}_{\partial\Omega}\,\mathrm{d}\sigma}{|\partial\Omega|^2}\right]
= \frac{L^N \cdot L^{-1} \cdot L^{N-1}}{L^{2N-2}} = 1,$
confirming that this combination is indeed dimensionless.

Recall \eqref{x-to=sigma} and \eqref{me-H}. 
In view of Theorem~\ref{thm-nonlocal}, we are now prepared to present the main result concerning the refined asymptotics of $u$ near the boundary as $\varepsilon \downarrow 0$.

\begin{theorem}\label{mm-thm}
Let $\Omega_{d_*}$, defined by \eqref{o-n3}, be a subdomain of $\Omega$, where $d_*>0$ satisfies \eqref{d-star}. Under the same hypotheses as in Proposition~\ref{existence}, we further assume \eqref{a-0}. Then 
$\sup_{\Omega\setminus\overline{\Omega_{d_*}}}\left(|u|+\eps|\nabla{u}|\right)\xrightarrow{\eps\downarrow0}0$ exponentially.
 Moreover, we have
the following asymptotic expansions which are uniformly in $\overline{\Omega_{d_*}}$:
\begin{align}\label{thm-u(i)}
\frac{1}{\eps}\left(u(x)- W(\frac{\delta(x)}{\e})\right)+ \frac{|\partial\Omega|\mathcal{A}'(q(0))\mathcal{Q}_F(b_*)}{|\Omega|\left(\mathcal{A}(q(0))\right)^{\frac32}} \Phi(\frac{\delta(x)}{\eps})- (N-1) \mathcal{H}_{\Gamma_{\delta(x)}}(\sigma(x))\Psi(\frac{\delta(x)}{\eps} )\xrightarrow{\eps\downarrow0}0,
\end{align}
and
\begin{equation}\label{thm-u(ii)}
\begin{aligned}
\partial_{\vec{n}}{u(x)}-\frac{1}{\eps}\sqrt{\frac{2}{\mathcal{A}(q(0))}F(W(\frac{\delta(x)}{\eps}))} -&\, \frac{|\partial\Omega|\mathcal{A}'(q(0))\mathcal{Q}_F(b_*)}{|\Omega|(\mathcal{A}(q(0)))^{\frac32}} \Phi'(\frac{\delta(x)}{\eps}) \\
+&\, (N-1) \mathcal{H}_{\Gamma_{\delta(x)}}(\sigma(x)) \Psi'(\frac{\delta(x)}{\eps})\xrightarrow{\eps\downarrow0}0,   
\end{aligned}
\end{equation}
where $\vec{n}=\vec{n}(x)\parallel\vec{n}(\sigma(x))$ is the unit normal vector at $x\in\Gamma_{\delta(x)}$  pointing to the boundary~$\partial\Omega$.
\end{theorem}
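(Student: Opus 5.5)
The plan is a matched second-order boundary-layer expansion in the curvilinear coordinates $(\delta,\sigma)$ of (G), coupled to the expansion of the nonlocal coefficient from Theorem~\ref{thm-nonlocal}. We write $\mathcal{A}_\eps:=\mathcal{A}(\fint_\Omega q(u)\,\mathrm{d}x)$ and put $a_0:=\mathcal{A}(q(0))$. By \eqref{q-three} and $\mathcal{A}\in\mathrm{C}^3$,
\[
\mathcal{A}_\eps=a_0+\eps\,\kappa_1+o(\eps),\qquad \kappa_1:=\frac{|\partial\Omega|}{|\Omega|}\sqrt{a_0}\,\mathcal{A}'(q(0))\mathcal{Q}_F(b_*).
\]
The exponential smallness of $|u|+\eps|\nabla u|$ on $\Omega\setminus\overline{\Omega_{d_*}}$ comes from \eqref{int-est-u} together with interior gradient estimates. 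We rescale on balls of radius $\eps$, where $\Delta u=\eps^{-2}\mathcal{A}_\eps^{-1}f(u)$ with $\mathcal{A}_\eps$ bounded below. The rescaled function is then $\mathrm{C}^{1}$-bounded by $\sup|u|$ on the doubled ball, which is exponentially small there.

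Inside $\Omega_{d_*}$ we set $t=\delta/\eps$ and use \eqref{Laplace}. The equation becomes
\[
\mathcal{A}_\eps\big(u_{tt}-\eps(N-1)\mathcal{H}_{\Gamma_\delta}(\sigma)u_t+\eps^2\Delta_{\Gamma_\delta}u\big)=f(u).
\]
We make the ansatz
\[
U=W(t)+\eps\big(-\tfrac{\kappa_1}{a_0^2}\Phi(t)+(N-1)\mathcal{H}_{\Gamma_\delta}(\sigma)\Psi(t)\big).
\]
Substituting it, the $O(\eps)$ terms read $a_0\Phi_1''-f'(W)\Phi_1=-\kappa_1W''+(N-1)\mathcal{H}a_0W'$, where $\Phi_1$ is the $O(\eps)$ correction. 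We use $W''=f(W)/a_0$ and $W'=-\sqrt{2F(W)/a_0}$, so that $a_0W'=-\sqrt{2a_0F(W)}$. Matching with \eqref{Phi-new} and \eqref{Psi-new} gives exactly the stated coefficients. The sign and normalization check is $-\kappa_1/a_0^2=-\frac{|\partial\Omega|\mathcal{A}'(q(0))\mathcal{Q}_F(b_*)}{|\Omega|a_0^{3/2}}$, which agrees with \eqref{thm-u(i)}. The Robin condition holds to order $\eps$ because $\Phi(0)-\gamma\Phi'(0)=\Psi(0)-\gamma\Psi'(0)=0$ and $\partial_{\vec n}$ on $\partial\Omega$ equals $-\eps^{-1}\partial_t$.

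Several contributions remain in the residual of the ansatz:
\begin{itemize}
\item terms of size $o(\eps)$ from the $\mathcal{A}_\eps$ expansion;
\item $O(\eps^2)$ terms, namely $\eps^2\Delta_{\Gamma_\delta}$ acting on $\mathcal{H}_{\Gamma_\delta}\Psi$ and the Taylor remainder $f''$;
\item the $t$-dependence of $\mathcal{H}_{\Gamma_\delta}$ hidden in $\partial_t$, which produces an $O(\eps^2)$ term times $\partial_\delta\mathcal{H}$.
\end{itemize}
All of these carry the exponential decay of $W,\Phi,\Psi$ (Lemmas~\ref{lem-W} and \ref{lem-PhiPsi}).

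To compare $u$ with $U$, we cut $U$ off smoothly near $\delta=d_*$, where everything is exponentially small. We then set $w=u-U$, which solves a linear equation
\[
\eps^2\mathcal{A}_\eps\Delta w-c_\eps(x)w=R_\eps,
\]
with $c_\eps=\int_0^1 f'(U+s w)\,\mathrm{d}s\ge\inf f'>0$, and boundary condition $w+\eps\gamma\partial_{\vec n}w=o(\eps)$. The maximum principle, using barriers given by constants (admissible since $\gamma\ge0$), yields $\|w\|_\infty\le C\|R_\eps\|_\infty/\inf f'+o(\eps)=o(\eps)$, which is \eqref{thm-u(i)}. For \eqref{thm-u(ii)} we rescale by $\eps$ near each boundary point. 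The rescaled $w/\eps$ satisfies a uniformly elliptic equation with $o(1)$ data and Robin data, so boundary Schauder $\mathrm{C}^{1,\alpha}$ estimates give $\eps|\nabla w|=o(\eps)$. Differentiating $U$ in $\delta$ then gives \eqref{thm-u(ii)}, with the sign dictated by $\vec n$ pointing toward $\partial\Omega$, i.e.\ $\partial_{\vec n}=-\partial_\delta$. Note also that $\eps^{-1}W'\cdot(-1)=\eps^{-1}\sqrt{2F(W)/a_0}$.

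The main obstacle is that Theorem~\ref{thm-nonlocal} gives only an $o(\eps^2)$-type remainder for $\fint q(u)$, hence only $o(\eps)$ for $\mathcal{A}_\eps-a_0-\eps\kappa_1$. So $R_\eps$ is merely $o(\eps)$ with no rate, and we must make sure the rateless $o(1)$ in the statement is all that is claimed; it is. The delicate bookkeeping is verifying that the curvature term involves $\mathcal{H}_{\Gamma_\delta}$ rather than $\mathcal{H}_{\partial\Omega}$, so that no $O(\delta)=O(\eps t)$ mismatch appears at order $\eps$. The difference between the two is $O(\delta)$, which multiplied by $\eps\Psi(t)$ is $\eps^2 t\Psi(t)$ and is uniformly small thanks to exponential decay. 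Either choice therefore works, and we keep $\mathcal{H}_{\Gamma_\delta}$ as stated, which also makes the $\partial_\delta\mathcal{H}$ contribution harmless.
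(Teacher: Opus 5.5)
Your proposal is correct and follows essentially the paper's route. Both arguments use a corrector $W+\eps\big(c\,\Phi+(N-1)\mathcal{H}_{\Gamma_\delta}\Psi\big)$ in the $(\delta,\sigma)$ coordinates, a maximum-principle comparison that exploits the Robin condition with $\gamma\ge0$, and $\eps$-rescaled elliptic estimates for the normal derivative.

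The differences are technical. The paper first proves the $O(\eps^2)$ estimate of Lemma~\ref{lem2} with the coefficient $\frac{1}{\mathcal{A}(B_\eps)}-\frac{1}{\mathcal{A}(q(0))}$ left implicit, because that lemma is what feeds the proof of Theorem~\ref{thm-nonlocal}, and only afterwards substitutes the expansion of $\mathcal{A}(B_\eps)$. You instead insert $\kappa_1$ from Theorem~\ref{thm-nonlocal} at the outset and linearize by the mean value theorem around your ansatz rather than around $W$.

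Incidentally, Theorem~\ref{thm-nonlocal} already yields $\mathcal{A}_\eps-a_0-\eps\kappa_1=O(\eps^2)$, not merely $o(\eps)$, so the missing rate you flag as the main obstacle is not one.
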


Theorem~\ref{mm-thm} reveals in a precise manner how the geometry of the domain influences the boundary layer structure of $u$. In particular, \eqref{thm-u(i)} and \eqref{thm-u(ii)} provide refined asymptotic descriptions of $u(x_{\varepsilon})$ and $\partial_{\vec{n}}u(x_{\varepsilon})$ for points $x_{\varepsilon}\in\overline{\Omega_{d_*}}$ approaching the boundary, subject to the condition $\lim_{\varepsilon\downarrow 0}\frac{\delta(x_{\varepsilon})}{\varepsilon}<\infty.$ These results capture the delicate interplay between the singular perturbation and the boundary geometry. For explicit representations of $\Phi(t_0)$, $\Psi(t)$, $\Phi'(t)$, and $\Psi'(t)$, we refer the reader to \eqref{iph-1}, \eqref{ips-1}, and \eqref{ish}--\eqref{ihs}; see also Remark~\ref{rk-ph} in Section~\ref{sec-thm1.2}.

As a direct consequence of Theorem~\ref{mm-thm}, we further establish the boundary asymptotics of $u$ in a more explicit form.

\begin{corollary}[Boundary asymptotics]\label{cor-lm} Under the hypotheses of Theorem~\ref{mm-thm}, the following boundary asymptotics hold:
\begin{equation}\label{thm-BDL}
 \begin{aligned}
\left|\frac{u(x)- b_*}{\eps}- \frac{\gamma\mathcal{G}(x)}{\frac{\gamma f(b_*)}{\sqrt{\mathcal{A}(q(0))}} + \sqrt{2F(b_*)}}\right|
+\left|\partial_{\vec{n}}{u(x)}-\left(\frac{1}{\eps}\sqrt{\frac{2F(b_*)}{\mathcal{A}(q(0))}}- \frac{\mathcal{G}(x)}{\frac{\gamma f(b_*)}{\sqrt{\mathcal{A}(q(0))}} + \sqrt{2F(b_*)}}\right)\right|
\xrightarrow{\eps\downarrow0}0,
\end{aligned}
\end{equation}
uniformly for $x\in\partial\Omega$, where
\begin{equation}\label{inb-star}
 \mathcal{G}(x):= \frac{|\partial\Omega|\mathcal{A}'(q(0))}{|\Omega|\mathcal{A}(q(0))}
 F(b_*)\mathcal{Q}_F(b_*)
 +(N-1)\mathcal{H}_{\partial\Omega}(x)\int^{b_*}_0\sqrt{2F(s)}\,\mathrm{d}s
\end{equation}
 captures the combined nonlocal and geometric effects. Here,
$\mathcal{Q}_F(b_*)=\int_0^{b_*}\frac{q(s)-q(0)}{\sqrt{2F(s)}}\mathrm{d}s$
is defined in~\eqref{Q-F}.
\end{corollary}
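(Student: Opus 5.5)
The corollary follows by evaluating the uniform expansions \eqref{thm-u(i)} and \eqref{thm-u(ii)} of Theorem~\ref{mm-thm} on $\partial\Omega$. There $\delta(x)=0$, $\sigma(x)=x$ and $\mathcal{H}_{\Gamma_0}=\mathcal{H}_{\partial\Omega}$ by \eqref{me-H}. Since $W(0)=b_*$ and $\sqrt{2F(W(0))/\mathcal{A}(q(0))}=\sqrt{2F(b_*)/\mathcal{A}(q(0))}$, everything reduces to computing the four numbers $\Phi(0),\Phi'(0),\Psi(0),\Psi'(0)$ explicitly. Uniformity in $x\in\partial\Omega$ is inherited directly from the uniformity in Theorem~\ref{mm-thm}, and $\mathcal{H}_{\partial\Omega}$ is bounded on $\partial\Omega$.

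To compute these values, write $\mathcal{A}_0:=\mathcal{A}(q(0))$. I would use the fact that $W'$ solves the linearized equation $\mathcal{A}_0 W'''=f'(W)W'$, obtained by differentiating \eqref{ode-W}. A Wronskian-type identity then follows from \eqref{Phi-new}:
\begin{equation*}
\frac{d}{dt}\Big[\mathcal{A}_0\big(\Phi'W'-\Phi W''\big)\Big]=\mathcal{A}_0 f(W)W'=\mathcal{A}_0\frac{d}{dt}F(W).
\end{equation*}
Integrating over $(0,\infty)$ uses the decay of $W,W',W'',\Phi,\Phi'$ at infinity (Lemmas~\ref{lem-W} and~\ref{lem-PhiPsi}). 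This gives $\Phi'(0)W'(0)-\Phi(0)W''(0)=F(b_*)$.

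Next insert $W'(0)=-\sqrt{2F(b_*)/\mathcal{A}_0}$, $W''(0)=f(b_*)/\mathcal{A}_0$ and the Robin relation $\Phi(0)=\gamma\Phi'(0)$. Set $D:=\frac{\gamma f(b_*)}{\sqrt{\mathcal{A}_0}}+\sqrt{2F(b_*)}$. Then
\begin{equation*}
\Phi'(0)=-\frac{\sqrt{\mathcal{A}_0}\,F(b_*)}{D},\qquad \Phi(0)=\gamma\Phi'(0).
\end{equation*}
The same computation applies to \eqref{Psi-new}, now with right-hand side $-\sqrt{2\mathcal{A}_0F(W)}\,W'$. Changing variables $s=W(t)$ gives $\int_0^\infty\sqrt{2\mathcal{A}_0F(W)}\,W'\,dt=-\sqrt{\mathcal{A}_0}\int_0^{b_*}\sqrt{2F(s)}\,ds$. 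Hence
\begin{equation*}
\Psi'(0)=\frac{1}{D}\int_0^{b_*}\sqrt{2F(s)}\,ds,\qquad \Psi(0)=\gamma\Psi'(0).
\end{equation*}

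Substitution is then routine. Let $c:=\frac{|\partial\Omega|\mathcal{A}'(q(0))\mathcal{Q}_F(b_*)}{|\Omega|\mathcal{A}_0^{3/2}}$. By \eqref{thm-u(i)}, $\frac{u-b_*}{\varepsilon}\to -c\Phi(0)+(N-1)\mathcal{H}_{\partial\Omega}\Psi(0)$, and this equals $\gamma\mathcal{G}(x)/D$ by \eqref{inb-star}. Likewise, by \eqref{thm-u(ii)}, $\partial_{\vec n}u-\frac1\varepsilon\sqrt{2F(b_*)/\mathcal{A}_0}\to c\Phi'(0)-(N-1)\mathcal{H}_{\partial\Omega}\Psi'(0)=-\mathcal{G}(x)/D$. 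Together these give \eqref{thm-BDL}.

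The only delicate point is justifying the vanishing of the boundary terms at $t=\infty$ in the Wronskian identities. This needs exponential decay of $\Phi,\Psi$ and their derivatives, which I would take from Lemma~\ref{lem-PhiPsi}. A secondary point is keeping track of the sign convention for $\vec n$ as the normal pointing toward $\partial\Omega$ in \eqref{thm-u(ii)}. On $\partial\Omega$ this normal agrees with the outward normal, so the stated signs are consistent.
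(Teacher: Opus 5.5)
Your proposal is correct and follows the paper's intended argument: set $\delta(x)=0$ in \eqref{thm-u(i)}--\eqref{thm-u(ii)} of Theorem~\ref{mm-thm} and insert the explicit boundary values of $\Phi$ and $\Psi$. Your Wronskian identity against $W'$ is a cleaner derivation of the same first integrals \eqref{iph-1} and \eqref{ips-1} from Lemma~\ref{lem-PhiPsi}, so you could simply cite \eqref{iph-2} and \eqref{ips-2}, which agree with your values after multiplying numerator and denominator by $\sqrt{\mathcal{A}(q(0))}$.
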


When $\gamma>0$, the refined boundary asymptotic expansion of $u$ follows directly from the boundary condition~\eqref{bd} combined with \eqref{thm-BDL}. We emphasize that both $\gamma$ and the surface-to-volume ratio $\frac{|\partial\Omega|}{|\Omega|}$ play essential roles in shaping the boundary layer behavior.

In the special case $\gamma=0$ and $\mathcal{A}\equiv 1$ independent of the solution~$u$, we have $b_*=b_0$, and problem~\eqref{maineq}--\eqref{bd} reduces to a classical singularly perturbed Dirichlet problem, and \eqref{thm-BDL} simplifies to
\begin{equation*}
\partial_{\vec{n}}{u(x)}=\frac{1}{\varepsilon}\sqrt{2F(b_0)}- (N-1)\mathcal{H}_{\partial\Omega}(x)\int_0^{b_0}\sqrt{\frac{F(s)}{F(b_0)}}\,\mathrm{d}s + o_\varepsilon(1).
\end{equation*}
Such boundary asymptotics can be derived via a blow-up argument; see, for instance, \cite[Theorem~1]{s2004}. The present work, however, is concerned with a more general framework involving nonlocal effect and Robin boundary conditions, which lead to significantly richer boundary layer structures.

\subsection*{Organization of the paper} The remainder of the paper is organized as follows. 
In Section~\ref{prop1-sec}, we introduce the basic properties of $v_{\theta,\varepsilon}$ (see Section~\ref{sch-52}) and then establish Proposition~\ref{existence} (see Section~\ref{s2-pf1.1}). 
Section~\ref{sec-thm1.2} is devoted to the proof of Theorem~\ref{thm-nonlocal}, while Section~\ref{sec-thm1.3} completes the proofs of Theorem~\ref{mm-thm} and Corollary~\ref{cor-lm}. 
Finally, the appendix (Section~\ref{sec-ap}) contains the proofs of Lemmas~\ref{lem-W}, \ref{lem-PhiPsi}, and \ref{qw-lem}, which provide the key properties of $W$, $\Phi$, and $\Psi$ used throughout the analysis (see Sections~\ref{subap1}--\ref{subap3}).

\section{\bf Proof of Proposition~\ref{existence}}\label{prop1-sec}

We begin by establishing the existence and uniqueness of classical solutions to \eqref{vy-eq} for each ${\theta}>0$ and $\eps>0$. Without loss of generality, we focus on the cases $b_0 \geq 0$ and $\gamma > 0$. Consider the corresponding energy functional\begin{equation*}E_{{\theta},\eps} (V):= \int_\Omega\left( \frac{{\theta}^2}{2}|\nabla V|^2 + F(V)\right) \dxx+\frac{{\theta}^2}{2\gamma\eps} \int_{\partial\Omega} (V-b_0)^2 ,\mathrm{d}\sigma_x,\qquad V\in\mathrm{H}^1(\Omega).
\end{equation*}
By assumption \eqref{assume-f}, the strict convexity of $F$ implies that $E_{\theta,\eps}$ is a strictly convex functional on $\mathrm{H}^1(\Omega)$, which directly guarantees the uniqueness of any minimizer.

Furthermore, since $F$ satisfies the quadratic growth condition $F(s) \geq \frac{1}{2}(\inf_{\mathbb{R}} f') s^2$ and is strictly convex, the functional $E_{\theta,\eps}$ is both coercive and weakly lower semicontinuous in $\mathrm{H}^1(\Omega)$. The compactness of the trace operator $\Gamma: \mathrm{H}^1(\Omega) \to \mathrm{L}^2(\partial\Omega)$ (via the Kondrachov embedding theorem) further ensures that the boundary term is weakly continuous in $\mathrm{H}^1(\Omega)$. Consequently, by the direct method in the calculus of variations, $E_{\theta,\eps}$ attains a unique minimizer $v_{\theta,\eps} \in \mathrm{H}^1(\Omega)$, which is a weak solution to \eqref{vy-eq}.

To prove $0 \leq v_{{\theta},\eps} \leq b_0$, we employ a truncation argument. Define $$w(x) := \min\left\{\max\left\{v_{{\theta},\eps}(x), 0\right\}, b_0\right\},~x\in\overline{\Omega}.$$ 
By construction, $0 \leq w \leq b_0$ and $|\nabla w| \leq |\nabla v_{{\theta},\eps}|$ almost everywhere in $\Omega$.
By~\eqref{assume-f} and \eqref{big-F}, we have  $f(0)= 0 \leq f(b_0)$ and $F(w) \leq F(v_{{\theta},\eps})$.  Regarding the boundary term, since $b_0 \geq 0$ and noting that $w(x)=0 \neq b_0$ occurs only when $v_{{\theta},\eps}(x) \leq 0$, we have $(w-b_0)^2 \leq (v_{{\theta},\eps}-b_0)^2$ on $\partial\Omega$.  Consequently, $E_{{\theta},\eps}(w) \leq E_{{\theta},\eps}(v_{{\theta},\eps})$. By the uniqueness of the minimizer, we obtain
\begin{equation*}
\min\left\{\max\left\{v_{{\theta},\eps}(x), 0\right\}, b_0\right\} = v_{{\theta},\eps} \quad\text{in}\quad\mathrm{H}^1(\Omega),
\end{equation*}
implying $0 \leq v_{{\theta},\eps} \leq b_0$ almost everywhere in $\Omega$.

Applying the standard elliptic regularity theory (cf. \cite[Theorem 6.30]{L2013} and \cite{L1988,N}), we have $v_{{\theta},\eps}\in \mathrm{W}^{2,p}(\Omega)$ for any $p>1$, which implies $v_{{\theta},\eps}\in\mathrm{C}^{\alpha}(\overline{\Omega})$ for some $\alpha\in(0,1)$. Furthermore, the Schauder approach (cf. \cite[Theorems 6.31]{GT2001}) ensures that $v_{{\theta},\eps}\in\mathrm{C}^{1,\alpha}(\overline{\Omega})\cap\mathrm{C}^{2,\alpha}({\Omega})$ is a classical solution. The uniqueness of such a classical solution is guaranteed by the fact that $\gamma >0$ and $f$ is strictly increasing on $\mathbb{R}$.

\subsection{Basic properties of $v_{{\theta},\eps}$ }\label{sch-52}

 Due to the uniqueness of solutions to~\eqref{vy-eq}, the condition $b_0=0$ implies that $v_{\theta,\varepsilon}\equiv 0$ in $\overline{\Omega}$. Therefore, without loss of generality, we assume \eqref{a-0} and restrict attention to the nontrivial case. Under this assumption, since $0 \le v_{\theta,\varepsilon} \le b_0$ and $f$ satisfies \eqref{assume-f}, it follows that $f'(v_{\theta,\varepsilon}) \ge \min_{[0,b_0]} f'$. Consequently, equation~\eqref{vy-eq} yields
$\theta\Delta v_{{\theta},\eps}\geq (\min_{[0,b_0]}f')v_{{\theta},\eps}$ in $\Omega$. The following Lemma~\ref{intdecay} for interior estimate of $v_{\theta,\varepsilon}$ then follows from the argument in~\cite[Lemma~3.1]{LWY2020}, and we omit the proof.

\begin{lemma}\label{intdecay}
Under the same hypotheses as in Proposition~\ref{existence}, we assume \eqref{a-0}. Then, for ${\theta}\in(0,1)$ and $\eps>0$, there exists a positive constant $C^*$ independent of ${\theta}$ and $\eps$ such that
\begin{align}\label{int-est-v}
    0\leq v_{{\theta},\eps}(x)\leq{b_0}\exp\left(-\frac{C^*}{{\theta}}\mathrm{dist}(x,{\partial\Omega})\right),\,\,\mathrm{for}\,\,x\in\Omega.
\end{align}
\end{lemma}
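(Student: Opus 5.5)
We prove the bound by comparison with an explicit supersolution built from the distance function. Set $m:=\min_{[0,b_0]}f'>0$. Since $0\le v_{\theta,\varepsilon}\le b_0$ and $f(0)=0$, the mean value theorem gives $f(v_{\theta,\varepsilon})\ge m\,v_{\theta,\varepsilon}$. Hence $v:=v_{\theta,\varepsilon}$ satisfies the linear differential inequality
\[
\theta^2\Delta v\ge m\,v \quad\text{in }\Omega, \qquad 0\le v\le b_0 \ \text{ on }\overline{\Omega}.
\]
The upper bound $v\le b_0$ on $\partial\Omega$ is already known from the truncation argument, so the Robin condition plays no further role.

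We then construct a barrier. A pure exponential of $\mathrm{dist}(x,\partial\Omega)$ is not smooth away from the boundary layer, so we use a superposition of radial exponentials. Fix $x_0\in\Omega$ and let $r:=\mathrm{dist}(x_0,\partial\Omega)$. On the ball $B_r(x_0)$ consider
\[
\phi(x)=b_0\,\frac{\cosh(\mu|x-x_0|/\theta)}{\cosh(\mu r/\theta)}
\]
in the one-dimensional case, or more generally a radial function $\phi(x)=b_0\,g(|x-x_0|)/g(r)$, where $g$ solves $g''+\frac{N-1}{\rho}g'=\frac{\mu^2}{\theta^2}g$ with $g'(0)=0$; this $g$ is a modified Bessel-type function, positive and increasing. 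Taking $\mu^2=m$ gives $\theta^2\Delta\phi=m\phi$ in $B_r(x_0)$ and $\phi=b_0\ge v$ on $\partial B_r(x_0)$.

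The difference $w=v-\phi$ then satisfies $\theta^2\Delta w\ge m w$ in $B_r(x_0)$ and $w\le0$ on $\partial B_r(x_0)$. The weak maximum principle, applied at a positive maximum where $\Delta w\le 0$, forces $w\le0$. Therefore
\[
v(x_0)\le b_0\,\frac{g(0)}{g(r)}.
\]

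It remains to show $g(0)/g(r)\le e^{-C^*r/\theta}$ uniformly. This is the main technical step. We would avoid Bessel asymptotics and use the simpler radial subsolution $h(\rho)=\cosh(\mu\rho/(\sqrt{N}\,\theta))$ on $[0,r]$. Its derivative satisfies $h'\le (\mu/(\sqrt N\theta))\,h$, hence $\frac{N-1}{\rho}h'\le (N-1)\frac{\mu^2}{N\theta^2}h$, where we use $\tanh s\le s$. This gives
\[
\Delta h=h''+\tfrac{N-1}{\rho}h'\le \tfrac{\mu^2}{\theta^2}h .
\]
So $h$ is a supersolution of $\theta^2\Delta h\le m h$, and using $h$ in place of $g$ in the comparison above works directly. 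We obtain
\[
v(x_0)\le \frac{b_0}{\cosh(\mu r/(\sqrt N\theta))}\le 2b_0\,e^{-\sqrt{m}\,r/(\sqrt N\theta)}.
\]

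Finally we remove the factor $2$. For $r\ge \theta\ln 4\cdot\sqrt N/\sqrt m$, it is absorbed by halving the rate. For smaller $r$, we use $v\le b_0$ together with $e^{-cr/\theta}\ge 1/2$ once $c$ is small. Choosing $C^*=\sqrt{m}/(2\sqrt N)$ handles both ranges, and this constant is independent of $\theta$ and $\varepsilon$.
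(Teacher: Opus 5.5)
Your comparison on $B_r(x_0)$ is sound. The supersolution check for $h(\rho)=\cosh(c\rho/\theta)$ with $c=\sqrt{m/N}$ via $\tanh s\le s$ is fine, and it correctly gives $v(x_0)\le b_0/\cosh(cr/\theta)\le 2b_0e^{-cr/\theta}$. The gap is the final step, where you remove the factor $2$. For $r<\theta\ln 4/c$ your argument falls back on $v(x_0)\le b_0$, but $b_0>b_0e^{-C^*r/\theta}$ for every $r>0$. The remark $e^{-C^*r/\theta}\ge\frac12$ only yields $v\le 2b_0e^{-C^*r/\theta}$, so it reinstates the factor $2$.

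This cannot be fixed by tuning constants. The function $1/\cosh(cr/\theta)=1-\frac{c^2r^2}{2\theta^2}+\cdots$ is flat to second order at $r=0$, whereas $e^{-C^*r/\theta}=1-\frac{C^*r}{\theta}+\cdots$ has slope $-C^*/\theta$. So a ball whose radius is $r$ itself can never give the stated bound near $\partial\Omega$. Consistent with this, your argument never uses $\theta<1$ or any property of $\Omega$, and it outputs the domain-independent constant $C^*=\sqrt m/(2\sqrt N)$. That constant is false in general. Take $f(\tau)=m\tau$, $\gamma=0$, $\Omega=B_R(0)$. The solution is radial with $v(0)=b_0/g(s)=b_0\bigl(1-\frac{s^2}{2N}+O(s^4)\bigr)$, where $s=\sqrt mR/\theta$ and $g$ is your Bessel-type function with $g(0)=1$. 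This exceeds $b_0e^{-s/(2\sqrt N)}$ once $s$ is small, for instance when $R$ is small and $\theta$ is close to $1$. What you have actually proved is $v\le\min\{b_0,\,2b_0e^{-c\delta/\theta}\}$. That would suffice for the later uses in the paper, but it is not the stated lemma.

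The missing ingredient is a barrier at a fixed spatial scale, which is exactly where the smoothness of $\partial\Omega$ and the restriction $\theta<1$ enter. Let $\rho_0>0$ be a uniform interior-ball radius for $\partial\Omega$. Given $x_0$ with $r=\delta(x_0)$, construct $B_R(y_0)$ as follows:
\begin{itemize}
\item if $r\ge\rho_0$, take $B_R(y_0)=B_r(x_0)$;
\item otherwise take $R=\rho_0$ and $y_0=\sigma-\rho_0\vec{n}(\sigma)$, where $\sigma$ is a nearest boundary point, so that $x_0=\sigma-r\vec{n}(\sigma)$.
\end{itemize}
In both cases $B_R(y_0)\subset\Omega$, $R\ge\rho_0$, and $|x_0-y_0|=R-r$. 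Your comparison with $b_0h(|x-y_0|)/h(R)$ on $B_R(y_0)$ then gives, with $a=cR/\theta$ and $s=cr/\theta\in[0,a]$,
\[
v(x_0)\le b_0\,\frac{\cosh(a-s)}{\cosh a}=b_0\exp\Big(-\int_{a-s}^{a}\tanh t\,\mathrm{d}t\Big)\le b_0\exp\Big(-\frac{s}{2}\tanh\frac{a}{2}\Big).
\]
The last inequality follows from $\tanh\ge\tanh\frac a2$ on $[\frac a2,a]$, treating $s\le\frac a2$ and $s>\frac a2$ separately. Since $\theta<1$ forces $a\ge c\rho_0$, this gives the lemma with $C^*=\frac{c}{2}\tanh\frac{c\rho_0}{2}$. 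This constant depends on $\Omega$ through $\rho_0$, as the example above shows it must.
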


The following property plays a crucial role in dealing with \eqref{u-to-W}.
\begin{lemma}\label{lem-W}
Assume \eqref{assume-f} and \eqref{a-0}. Then for $\gamma\geq0$, \eqref{ode-W} has a unique solution $W$. Moreover,
\begin{align}\label{w1w}
    W(0)=b_*>0, \ \ W'(0)=-\sqrt{\frac{2F(b_*)}{\mathcal{A}(q(0))}}, \ \ W'(t)=-\sqrt{\frac{2F(W(t))}{\mathcal{A}(q(0))}}< 0,
\end{align}
and 
\begin{align}\label{w3w}
    0< W(t) \le b_* \mathrm{e}^{-M_0t},\quad|W'(t)|\leq\sqrt{\frac{2b_*f(b_*)}{\mathcal{A}(q(0))}}\mathrm{e}^{-\frac{M_0}{2}t},
\end{align}
for $t\geq0$, where $b_*>0$ was defined by \eqref{b0} and $M_0=\textstyle\sqrt{\frac{1}{\mathcal{A}(q(0))}\min\limits_{[0,b_*]}f'}>0$.
\end{lemma}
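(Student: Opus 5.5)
We prove Lemma~\ref{lem-W} with the standard first-integral (phase-plane) argument for autonomous second-order ODEs. Throughout, write $a:=\mathcal{A}(q(0))>0$.

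\emph{Step 1: reduction to a first-order problem.} Let $W$ solve \eqref{ode-W}. Multiply $aW''=f(W)$ by $W'$ to get $\frac{\mathrm d}{\mathrm dt}\bigl(\tfrac a2 (W')^2-F(W)\bigr)=0$, so $\tfrac a2(W')^2-F(W)$ is constant. The limit $W(t)\to0$ forces this constant to vanish. To see this, first note $W$ is bounded. Since $F(0)=0$, $F\ge \tfrac12(\inf f')s^2$ and $f$ is $\mathrm C^1$, the equation then shows $W''$ is bounded, hence $W'$ is uniformly continuous. Together with $W\to0$, this gives $W'\to0$ (a Barbalat-type argument).

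So $a(W')^2=2F(W)$. Since $F>0$ away from $0$, $W'$ can vanish only where $W=0$. By uniqueness for the ODE (the data $W=W'=0$ give the trivial solution), $W$ never vanishes unless $W\equiv0$, and $W\equiv0$ is incompatible with $W(0)-\gamma W'(0)=b_0>0$. Hence $W$ has a fixed sign and is monotone, and decay to $0$ forces $W'=-\operatorname{sgn}(W)\sqrt{2F(W)/a}$.

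The boundary condition now reads $W(0)+\gamma\operatorname{sgn}(W(0))\sqrt{2F(W(0))/a}=b_0>0$. The left side is strictly increasing in $W(0)$ and negative for $W(0)<0$, so $W(0)>0$. Uniqueness of $W(0)$ and the relation $W(0)=b_*\in(0,b_0]$ follow from \eqref{b0}: the map $s\mapsto s+\gamma\sqrt{2F(s)/a}$ is continuous and strictly increasing on $[0,\infty)$, equals $0$ at $s=0$, and tends to $+\infty$. This gives \eqref{w1w}.

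\emph{Step 2: existence and uniqueness.} Conversely, consider the autonomous problem $W'=-\sqrt{2F(W)/a}$, $W(0)=b_*$. The right-hand side is locally Lipschitz on $(0,\infty)$. Near $0$ we have $\sqrt{2F(s)}\sim\sqrt{f'(0)}\,s$, which is Lipschitz at $0$ because $F(s)/s^2\to f'(0)/2>0$ and $F\in\mathrm C^3$. Hence there is a unique global solution. It is positive (it cannot reach $0$ in finite time, since $0$ is an equilibrium), decreasing, and it solves $aW''=f(W)$ after differentiation. Combined with Step 1, this yields uniqueness.

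\emph{Step 3: decay estimates.} For $s\in[0,b_*]$ we have $F(s)=\int_0^s f\ge \tfrac12\min_{[0,b_*]}f'\,s^2$. Therefore $W'\le -\sqrt{\min f'/a}\,W=-M_0W$, and Grönwall gives $W(t)\le b_*\mathrm e^{-M_0t}$. For the derivative, convexity together with $f(0)=0$ gives $F(s)\le s f(s)$, and $f$ is increasing, so $2F(W)\le 2W f(W)\le 2W f(b_*)$. Hence
\[
|W'|\le\sqrt{\frac{2f(b_*)W}{a}}\le\sqrt{\frac{2b_*f(b_*)}{a}}\,\mathrm e^{-M_0t/2}.
\]

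The only delicate point is Step 1: justifying $W'\to0$ at infinity, so that the first-integral constant is zero, and handling a possible sign change of $W$ via ODE uniqueness at the equilibrium. Both are handled by the boundedness, uniform-continuity and uniqueness arguments indicated above.
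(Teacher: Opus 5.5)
Your proof is correct. Its decay estimates in Step 3 coincide with the paper's: $F(s)\ge \tfrac12\min_{[0,b_*]}f'\,s^2$ plus Grönwall for $W$, and $F(W)\le f(b_*)W$ for $W'$.

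Where you genuinely differ is existence and uniqueness. The paper argues variationally. For $\gamma>0$ it minimizes the strictly convex functional $\mathcal{E}_0[w]=\int_0^\infty\bigl(\tfrac12|w'|^2+F(w)/\mathcal{A}(q(0))\bigr)\,\mathrm{d}t+\tfrac{1}{2\gamma}(w(0)-b_0)^2$ over $\mathrm{H}^1(\mathbb{R}_+)$. It then gets $0<W\le b_0$ from the strong maximum principle and monotonicity from convexity. Only after that does it take the first integral, assuming $|W|+|W'|\to0$ without proof.

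You instead work entirely in the phase plane. You first extract necessary conditions from an arbitrary classical solution: the first integral with the Barbalat justification of $W'\to0$, the sign rigidity via ODE uniqueness at the equilibrium, and the strict monotonicity of $s\mapsto s+\gamma\operatorname{sgn}(s)\sqrt{2F(s)/a}$ (with $a=\mathcal{A}(q(0))$), which forces $W(0)=b_*$. You then construct the solution from the first-order problem $W'=-\sqrt{2F(W)/a}$. Its right-hand side is indeed $\mathrm{C}^1$ up to $0$, since $\sqrt{2F(s)}=s\bigl(2\int_0^1(1-\tau)f'(\tau s)\,\mathrm{d}\tau\bigr)^{1/2}$ for $s\ge0$.

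What your route buys:
\begin{itemize}
\item It treats $\gamma=0$ and $\gamma>0$ uniformly, whereas the paper's functional needs $\gamma>0$.
\item It gives uniqueness among all classical solutions with $W\to0$, not only among $\mathrm{H}^1$ critical points.
\item It supplies the justification of $W'\to0$ that the paper takes for granted.
\end{itemize}
What the paper's route buys is parallelism with the variational treatment of \eqref{vy-eq} in Section~\ref{prop1-sec}, and a method that does not depend on the one-dimensional autonomous first integral.

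Two small points should be made explicit in your Step 2. First, the constructed $W$ tends to $0$; this follows at once from the bound $W\le b_*\mathrm{e}^{-M_0t}$ of Step 3, which uses only the first-order equation. Second, $W$ satisfies the Robin condition at $t=0$, which is exactly \eqref{b0}.
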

For the sake of conciseness in the main text, the proof of Lemma~\ref{lem-W} is placed in the Appendix.
 
 \subsection{ Completion of the proof of Proposition~\ref{existence}}\label{s2-pf1.1}

For a fixed $\eps > 0$, we observe that if $\boldsymbol{\mathsf{M}}_{\eps}({\theta}) = 0$, then $v_{{\theta},\eps}$ is a solution to \eqref{maineq} with the boundary condition \eqref{bd}, where $\boldsymbol{\mathsf{M}}_{\eps}$ is defined in \eqref{vy-map}. Conversely, given a solution $u$ to \eqref{maineq}--\eqref{bd}, we define
\begin{equation*}
\widetilde{{\theta}}(\eps, u) := \eps \sqrt{\mathcal{A} \left( \fint_\Omega {q}(u) \,\mathrm{d}x \right)}.
\end{equation*}
It then follows from the uniqueness of solutions to \eqref{vy-eq} that
\begin{equation}\label{u-vye}
u \equiv v_{\widetilde{{\theta}}(\eps, u), \eps} \quad \text{and} \quad \boldsymbol{\mathsf{M}}_{\eps}(\widetilde{{\theta}}(\eps, u)) = 0.
\end{equation}
Consequently, $u$ is a solution to \eqref{maineq}--\eqref{bd} if and only if $u = v_{{\theta},\eps}$ for some $\theta > 0$ satisfying the algebraic relation $\boldsymbol{\mathsf{M}}_{\eps}({\theta}) = 0$.

To complete the proof of Proposition~\ref{existence}, it remains to show that, for sufficiently small $\varepsilon>0$, the equation $\boldsymbol{\mathsf{M}}_{\varepsilon}(\theta)=0$ admits a unique solution $\theta=\theta(\varepsilon)$. More precisely, we establish the following result.

\begin{lemma}\label{lem-vy-map}
Under the same assumptions as in Proposition~\ref{existence}, suppose that \eqref{a-0} holds. Then there exists a positive constant $\boldsymbol{\mathfrak{e}^*}$, depending only on $\mathcal{A}(q(0))$, $\displaystyle \max_{I}\mathcal{A}$, $\displaystyle \max_{I}|\mathcal{A}'|$, and $\displaystyle \max_{[0,b_0]}|q'|$, such that for each $\varepsilon \in (0,\boldsymbol{\mathfrak{e}^*})$, $\boldsymbol{\mathsf{M}}_{\varepsilon}(\theta)=0$ admits a unique zero $\theta=\theta(\varepsilon)>0$. Here,
$I=\ds[\min_{[0,b_0]}q,\max_{[0,b_0]}q]$.
\end{lemma}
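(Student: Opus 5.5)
We prove existence of a zero by the intermediate value theorem and uniqueness by a derivative sign argument, using only a priori bounds for $v_{\theta,\varepsilon}$ and for its $\theta$-derivative.

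\textbf{Step 1: existence.} Since $0\le v_{\theta,\varepsilon}\le b_0$, we have $\fint_\Omega q(v_{\theta,\varepsilon})\,\mathrm{d}x\in I$. Hence
$$\varepsilon^2\min_I\mathcal{A}\le \varepsilon^2\mathcal{A}\Big(\fint_\Omega q(v_{\theta,\varepsilon})\,\mathrm{d}x\Big)\le \varepsilon^2\max_I\mathcal{A}.$$
It follows that $\boldsymbol{\mathsf{M}}_\varepsilon(\theta)<0$ for $\theta<\varepsilon\sqrt{\min_I\mathcal{A}}$ and $\boldsymbol{\mathsf{M}}_\varepsilon(\theta)>0$ for $\theta>\varepsilon\sqrt{\max_I\mathcal{A}}$. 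Since $\boldsymbol{\mathsf{M}}_\varepsilon$ is continuous, it has a zero. Moreover, every zero lies in the window
$$\Theta_\varepsilon:=\Big[\varepsilon\sqrt{\min_I\mathcal{A}},\,\varepsilon\sqrt{\max_I\mathcal{A}}\Big],$$
so $\theta/\varepsilon$ is bounded above and below at any zero. For small $\varepsilon$ this also gives $\theta<1$, so Lemma~\ref{intdecay} applies there.

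\textbf{Step 2: uniqueness via the sign of $\boldsymbol{\mathsf{M}}_\varepsilon'$.} It suffices to show that $\boldsymbol{\mathsf{M}}_\varepsilon'(\theta)>0$ on $\Theta_\varepsilon$ for all $\varepsilon<\boldsymbol{\mathfrak{e}^*}$. Differentiating,
$$\boldsymbol{\mathsf{M}}_\varepsilon'(\theta)=2\theta-\varepsilon^2\mathcal{A}'\Big(\fint_\Omega q(v_{\theta,\varepsilon})\,\mathrm{d}x\Big)\fint_\Omega q'(v_{\theta,\varepsilon})\,\partial_\theta v_{\theta,\varepsilon}\,\mathrm{d}x .$$
The second term is bounded by $\varepsilon^2\max_I|\mathcal{A}'|\max_{[0,b_0]}|q'|\fint_\Omega|\partial_\theta v_{\theta,\varepsilon}|\,\mathrm{d}x$. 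The goal is therefore an estimate of the form
$$\fint_\Omega|\partial_\theta v_{\theta,\varepsilon}|\,\mathrm{d}x\lesssim \frac{\theta}{\varepsilon^2}\cdot o(1)\qquad\text{or simply}\qquad \fint_\Omega|\partial_\theta v_{\theta,\varepsilon}|\,\mathrm{d}x\le \frac{C}{\varepsilon}\ \text{ on }\Theta_\varepsilon .$$
With the latter, the correction is $O(\varepsilon)$ while $2\theta\ge 2\varepsilon\sqrt{\min_I\mathcal{A}}$. This is still borderline, so I want the sharper bound $\fint_\Omega|\partial_\theta v_{\theta,\varepsilon}|\,\mathrm{d}x\le C$ uniformly, which I expect to hold because $\partial_\theta v$ lives in a boundary layer of width $\theta$.

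\textbf{Step 3: bounding $w:=\partial_\theta v_{\theta,\varepsilon}$ (the main obstacle).} Differentiating \eqref{vy-eq} in $\theta$ gives
$$\theta^2\Delta w-f'(v)\,w=-2\theta\Delta v=-\frac{2}{\theta}f(v)\ \text{ in }\Omega,\qquad w+\gamma\varepsilon\,\partial_{\vec n}w=0\ \text{ on }\partial\Omega .$$
Because $f'\ge m:=\inf f'>0$ and the Robin condition is homogeneous, the maximum principle applied with the barrier $\frac{2}{\theta}\cdot\frac{f(v)}{\ldots}$ is too crude. Instead I compare $w$ with $\pm K\theta^{-1}\,\overline{V}$, where $\overline{V}(x)=b_0\exp(-C^*\delta(x)/(2\theta))$ is a supersolution-type profile, mimicking Lemma~\ref{intdecay}. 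Here $f(v)\le \max_{[0,b_0]}f'\,v$ and $v\le b_0\mathrm{e}^{-C^*\delta/\theta}$ is decaying. Choosing the decay rate $C^*/2$ leaves room to absorb the source term, while the curvature term in $\Delta$ near $\partial\Omega$ is lower order for $\theta$ small. This should yield
$$|w|\lesssim \theta^{-1}\mathrm{e}^{-c\,\delta(x)/\theta},$$
and integrating over $\Omega$ using the coarea formula (a layer of width $\theta$) gives $\int_\Omega|w|\,\mathrm{d}x\lesssim |\partial\Omega|$. Hence $\fint_\Omega|w|\,\mathrm{d}x\le C$, with $C$ depending on $b_0$, $f$ on $[0,b_0]$, and $\Omega$.

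\textbf{Step 4: conclusion.} On $\Theta_\varepsilon$,
$$\boldsymbol{\mathsf{M}}_\varepsilon'(\theta)\ge 2\varepsilon\sqrt{\min_I\mathcal{A}}-C\varepsilon^2\max_I|\mathcal{A}'|\max_{[0,b_0]}|q'|>0$$
for $\varepsilon<\boldsymbol{\mathfrak{e}^*}$, with $\boldsymbol{\mathfrak{e}^*}$ chosen explicitly from these constants. A $C^1$ function with positive derivative at each of its zeros has at most one zero, which gives uniqueness. The delicate point is Step 3: verifying the barrier at the Robin boundary and handling the geometry in the layer; if needed, I would fall back to an $L^1$ estimate obtained by testing the $w$-equation against $\mathrm{sgn}(w)$.
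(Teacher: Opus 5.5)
Your proposal is correct. It shares the paper's skeleton: the intermediate value theorem for existence, confinement of every zero to a window $\theta\asymp\varepsilon$, and positivity of $\boldsymbol{\mathsf{M}}_\varepsilon'$ on that window via a bound on $\fint_\Omega w\,\mathrm{d}x$. Both key ingredients, however, are obtained differently.

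\textbf{The window.} You use only $\fint_\Omega q(v_{\theta,\varepsilon})\,\mathrm{d}x\in I$, which places all zeros in $[\varepsilon\sqrt{\min_I\mathcal{A}},\varepsilon\sqrt{\max_I\mathcal{A}}]$ at once. The paper instead uses the decay \eqref{int-est-v} to show $\theta(\varepsilon)/\varepsilon\to\sqrt{\mathcal{A}(q(0))}$. That gives a tighter window but a non-explicit threshold $\mathfrak{e}$. Neither threshold literally respects the ``depending only on'' list: yours involves $\min_I\mathcal{A}$, the paper's involves $\mathfrak{e}$ and $C_f^*$.

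\textbf{The derivative bound.} The paper tests the $w$-equation against $w$ and applies Cauchy--Schwarz, obtaining only $\fint_\Omega w\,\mathrm{d}x\le C_f^*\theta^{-1/2}$ in \eqref{int-v}. This is an instance of your first target $\frac{\theta}{\varepsilon^2}\,o(1)$. It already suffices, because the correction term is then $O(\varepsilon^{3/2})\ll 2\theta$.

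Your barrier gives the sharper $O(1)$ bound plus pointwise localization, and it does close. The Robin boundary you flagged is harmless: $\partial_{\vec n}\overline V=\frac{\mu}{\theta}\overline V>0$ on $\partial\Omega$, so $\overline V+\gamma\varepsilon\partial_{\vec n}\overline V>0$ holds automatically. The one genuine technicality is that $\delta$ is smooth only in $\Omega_{d_0}$. Replace it by $\rho=h(\delta)$, where $h$ is smooth and nondecreasing with $h(s)=s$ for $s\le d_0/2$, $h'=0$ for $s\ge 3d_0/4$, and $h(s)\le s$. Then take $\mu\le C^*$ with $\mu^2\|\nabla\rho\|_\infty^2\le\frac14\inf f'$, and $\theta$ small.

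\textbf{Your fallback.} This is the cheapest route, and it coincides with the paper's \eqref{rk-diff-v}. The maximum principle gives $w\ge 0$, hence $\partial_{\vec n}w\le0$ on $\partial\Omega$: from the Robin condition if $\gamma>0$, and from $w|_{\partial\Omega}=0$ if $\gamma=0$. Integrating the $w$-equation over $\Omega$ then yields $\inf f'\int_\Omega w\,\mathrm{d}x\le\frac{2}{\theta}\int_\Omega f(v_{\theta,\varepsilon})\,\mathrm{d}x=O(1)$ by Lemma~\ref{intdecay}. No barrier or geometry is needed, and this also covers $\gamma=0$, which the paper's remark does not state.
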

 \begin{proof}
 For each fixed $\varepsilon>0$, one can follow the argument in~\cite[Theorem~1.1]{T2000} to conclude that $v_{\theta,\varepsilon}$ depends continuously differentiably on the parameter $\theta>0$ in $\mathrm{H}^1(\Omega)$ (see the footnote\footnote{Consider $E(\eta,V):=E_{\theta,\varepsilon}(V): \mathbb{R}_+ \times \mathrm{H}^1(\Omega)\to \mathbb{R}$. Then $v_{\theta,\varepsilon}$ is a critical point of $E(\theta,\cdot)$. The second variation $\frac{\partial^2 E}{\partial V^2}(\theta, v_{\theta,\varepsilon})$ defines a bounded and coercive bilinear form on $\mathrm{H}^1(\Omega)$ by \eqref{assume-f} and the trace inequality, and thus induces an equivalent inner product. In particular, it is an isomorphism in $\mathcal{L}(\mathrm{H}^1(\Omega),(\mathrm{H}^1(\Omega))^*)$ by the Lax--Milgram theorem. Hence, by the implicit function theorem, $v_{\theta,\varepsilon}$ depends $C^1$-smoothly on $\theta>0$.}). We denote by $\frac{\mathrm{d}v_{\theta,\varepsilon}}{\mathrm{d}\theta}$ the derivative defined as the limit $\lim_{h\to 0}\frac{v_{\theta+h,\varepsilon}-v_{\theta,\varepsilon}}{h}$ for $\theta\in(0,\infty)$. Consequently, for each $\varepsilon>0$, $\boldsymbol{\mathsf{M}}_{\varepsilon}(\theta)$ is a continuously differentiable function of $\theta\in(0,\infty)$.

 We now fix $\varepsilon>0$ sufficiently small such that $\varepsilon<\min_{I}\frac1{\sqrt{\mathcal{A}}}$. Then $\boldsymbol{\mathsf{M}}_{\eps}(1,\eps)=1-\eps^2{\mathcal{A}\left(\fint_\Omega {q}(v_{1,\eps}) \dxx\right)}>0$ since $0\leq v_{1,\eps}(x)\leq b_0$ on $\overline{\Omega}$. On the other hand, by \eqref{int-est-v} we have $\lim_{{\theta}\downarrow0}\boldsymbol{\mathsf{M}}_{\eps}({\theta})=-\eps^2\mathcal{A}(q(0))<0$. Hence, there exists at least one ${\theta}={\theta}(\eps)\in(0,1)$ depending on $\eps$ such that $\boldsymbol{\mathsf{M}}_{\eps}({\theta}(\eps),\eps)=0$. Moreover, by \eqref{int-est-v} again, any such $\theta(\varepsilon)$ satisfies
\begin{align}\label{lim-eta-eps}
\lim_{\varepsilon \downarrow 0} \frac{\theta(\varepsilon)}{\varepsilon}
= \lim_{\varepsilon \downarrow 0} \sqrt{\mathcal{A}\!\left(\fint_\Omega q(v_{\theta(\varepsilon),\varepsilon}) \,\mathrm{d}x\right)}
= \sqrt{\mathcal{A}(q(0))}.
\end{align}
Consequently, $u_{\varepsilon}=v_{\theta(\varepsilon),\varepsilon}$ is a solution of \eqref{maineq} subject to the boundary condition~\eqref{bd}, and \eqref{lim-eta-eps} ensures the existence of a positive constant $\mathfrak{e}$ such that, for all $\varepsilon \in (0,\mathfrak{e})$, any corresponding $\theta(\varepsilon)$ lies in the interval  $(\frac{\eps}{2}\sqrt{\mathcal{A}(q(0))},\frac{3\eps}{2}\sqrt{\mathcal{A}(q(0))})$. This conclusion also implies
\begin{align}\label{par-m}
\boldsymbol{\mathsf{M}}_{\varepsilon}(\theta) \neq 0 
\quad \text{whenever} \quad 
0<\varepsilon<\min\left\{\min_{I}\frac1{\sqrt{\mathcal{A}}},\mathfrak{e}\right\}
\ \text{and} \ 
\theta \notin  ( \frac{\varepsilon}{2}\sqrt{\mathcal{A}(q(0))}, \frac{3\varepsilon}{2}\sqrt{\mathcal{A}(q(0))}).
\end{align}
Accordingly, it suffices to find a constant $\boldsymbol{\mathfrak{e}^*} < \min\{\min_{I}\frac1{\sqrt{\mathcal{A}}},\mathfrak{e}\}$ such that, for each fixed $\varepsilon \in (0,\boldsymbol{\mathfrak{e}^*})$, the function $\boldsymbol{\mathsf{M}}_{\varepsilon}(\theta)$ is strictly monotone on $\left( \tfrac{\varepsilon}{2}\sqrt{\mathcal{A}(q(0))}, \tfrac{3\varepsilon}{2}\sqrt{\mathcal{A}(q(0))} \right)$. This would imply that, for each $\varepsilon \in (0,\boldsymbol{\mathfrak{e}^*})$, the equation $\boldsymbol{\mathsf{M}}_{\varepsilon}(\theta)=0$ admits a unique solution $\theta=\theta(\varepsilon)$.
 
 To achieve this goal, we now fix $0<\varepsilon< \min\{\min_{I}\frac1{\sqrt{\mathcal{A}}},\mathfrak{e}\}$. Differentiating $\boldsymbol{\mathsf{M}}_{\varepsilon}$ with respect to $\theta$, we obtain
\begin{align}\label{diff-m}
\frac{\mathrm{d}\boldsymbol{\mathsf{M}}_{\varepsilon}}{\mathrm{d}\theta}(\theta)
= 2\theta
- \varepsilon^2 \mathcal{A}'\!\left(\fint_{\Omega} q(v_{\theta,\varepsilon})\,\mathrm{d}x\right)
\fint_{\Omega} q'(v_{\theta,\varepsilon}) \frac{\mathrm{d}v_{\theta,\varepsilon}}{\mathrm{d}\theta}\,\mathrm{d}x.
\end{align}
To deal with the right-hand side of \eqref{diff-m}, we require the following claim.

\medskip
\noindent
\textbf{Claim 1.}
Let $\varepsilon>0$ be fixed. Then, for each $x\in\overline{\Omega}$,   $\frac{\mathrm{d}v_{{\theta},\eps}}{\mathrm{d}{\theta}}\geq0$. Moreover, it holds that
\begin{align}\label{max-v-eta}
\sup_{0<\theta<1} \sqrt{\theta} \int_{\Omega} \frac{\mathrm{d}v_{\theta,\varepsilon}}{\mathrm{d}\theta}\,\mathrm{d}x < \infty.
\end{align}
  \begin{proof}[Proof of Claim~1]
  Differentiating the equation of $v_{{\theta},\eps}$ in \eqref{vy-eq} with respect to ${\theta}$ and making appropriate manipulations, we arrive at
  \begin{align}\label{new-diff-v}
     {\theta}^2\Delta \frac{\mathrm{d}v_{{\theta},\eps}}{\mathrm{d}{\theta}}=f'(v_{{\theta},\eps})\frac{\mathrm{d}v_{{\theta},\eps}}{\mathrm{d}{\theta}}-\frac{2f(v_{{\theta},\eps})}{{\theta}}\,\,\mathrm{in}\,\,\Omega,
  \end{align}
  and 
  \begin{align}\label{diff-bd}
   \frac{\mathrm{d}v_{{\theta},\eps}}{\mathrm{d}{\theta}}+\gamma\eps\partial_{\vec{n}}\left(\frac{\mathrm{d}v_{{\theta},\eps}}{\mathrm{d}{\theta}}\right)=0\,\,\mathrm{on}\,\,\partial\Omega.   
  \end{align}
  Since $\gamma\geq0$ and $f'(v_{{\theta},\eps})\geq\ds\min_{[0,b_0]}f'>0$, we can apply the maximum principle to \eqref{new-diff-v}--\eqref{diff-bd} and thus obtain
  \begin{align}\label{diff-v}
    \frac{\mathrm{d}v_{{\theta},\eps}}{\mathrm{d}{\theta}}\geq0\,\,\mathrm{in}\,\,\overline{\Omega}.
  \end{align}

  Now we consider ${\theta}\in(0,1)$. Note that $\gamma\geq0$. Multiplying  \eqref{new-diff-v} by $\frac{\mathrm{d}v_{{\theta},\eps}}{\mathrm{d}{\theta}}$,  integrating the expression over $\Omega$ and using \eqref{diff-bd}, one may check that
  \begin{align*}   \fint_{\Omega}f'(v_{{\theta},\eps})\left(\frac{\mathrm{d}v_{{\theta},\eps}}{\mathrm{d}{\theta}}\right)^2\dxx=&\frac{2}{{\theta}}\fint_{\Omega}f(v_{{\theta},\eps})\frac{\mathrm{d}v_{{\theta},\eps}}{\mathrm{d}{\theta}}\dxx-{\theta}^2\fint_{\Omega}\left|\nabla\frac{\mathrm{d}v_{{\theta},\eps}}{\mathrm{d}{\theta}}\right|^2\dxx\\
      &+\frac{{\theta}^2}{|\Omega|}\int_{\partial\Omega}\frac{\mathrm{d}v_{{\theta},\eps}}{\mathrm{d}{\theta}}\partial_{\vec{n}}\left(\frac{\mathrm{d}v_{{\theta},\eps}}{\mathrm{d}{\theta}}\right)\mathrm{d}\sigma_x\\
      \leq&\frac{1}{{\theta}^2C_{f'}}\fint_{\Omega}(f(v_{{\theta},\eps}))^2\dxx+{C_{f'}}\fint_{\Omega}\left(\frac{\mathrm{d}v_{{\theta},\eps}}{\mathrm{d}{\theta}}\right)^2\dxx,
  \end{align*}
  where $C_{f'}:=\frac{1}{2}\min\limits_{[0,b_0]}f'>0$. Here we have used \eqref{diff-bd} and \eqref{diff-v} to verify $\partial_{\vec{n}}\left(\frac{\mathrm{d}v_{{\theta},\eps}}{\mathrm{d}{\theta}}\right)\leq0$ on $\partial\Omega$. The last estimate is obtained by the elementary inequality $\frac{2}{{\theta}}f(v_{{\theta},\eps})\frac{\mathrm{d}v_{{\theta},\eps}}{\mathrm{d}{\theta}}\leq\frac{1}{{\theta}^2C_{f'}}(f(v_{{\theta},\eps}))^2+C_{f'}(\frac{\mathrm{d}v_{{\theta},\eps}}{\mathrm{d}{\theta}})^2$.  Since $\min_{\overline{\Omega}}f'(v_{{\theta},\eps})\geq2C_{f'}$, we arrive at
  \begin{align*}
   \fint_{\Omega}\left(\frac{\mathrm{d}v_{{\theta},\eps}}{\mathrm{d}{\theta}}\right)^2\dxx\leq\frac{1}{{\theta}^2C_{f'}^2}\fint_{\Omega}(f(v_{{\theta},\eps}))^2\dxx.   
  \end{align*}
   Combining this estimate with \eqref{int-est-v} and \eqref{diff-v} yields that, for ${\theta}\in(0,1)$,
  \begin{equation}\label{int-v}
0\leq\fint_{\Omega}\frac{\mathrm{d}v_{{\theta},\eps}}{\mathrm{d}{\theta}}\dxx
 \leq  \frac{1}{{\theta}C_{f'}}\sqrt{\fint_{\Omega}(f(v_{{\theta},\eps}))^2\dxx}
   \leq\frac{1}{{\theta}C_{f'}}\max_{[0,b_0]}f'\sqrt{\fint_{\Omega}v_{{\theta},\eps}^2\dxx}\leq\frac{{ C}^{*}_f}{\sqrt{{\theta}}},
  \end{equation}
  where ${C}^{*}_f$ is a positive constant independent of ${\theta}$.  The last inequality of \eqref{int-v} comes from the estimate $\fint_{\Omega}v_{{\theta},\eps}^2\dxx=O({\theta})$ which can be obtained by \eqref{assume-f} and \eqref{int-est-v}. Therefore, we arrive at \eqref{max-v-eta} and complete the proof.
  \end{proof}

Next, we define
\begin{align}\label{math-e}
\boldsymbol{\mathfrak{e}^*}
=
\min\left\{
\mathfrak{e},\,\min_{I}
\frac{1}{\sqrt{\mathcal{A}}},\,
\frac{\big(\mathcal{A}(q(0))\big)^{3/2}}{2\big(C_f^* \max_I|\mathcal{A}'| \max_{[0,b_0]}|q'|\big)^2}
\right\},
\end{align}
where $\mathfrak{e}$ is given in \eqref{par-m}. The last term in \eqref{math-e} is chosen to guarantee the strict monotonicity of $\boldsymbol{\mathsf{M}}_{\varepsilon}$ on $\left(\tfrac{\varepsilon}{2}\sqrt{\mathcal{A}(q(0))}, \tfrac{3\varepsilon}{2}\sqrt{\mathcal{A}(q(0))}\right)$.

Indeed, for any fixed $\varepsilon \in (0,\boldsymbol{\mathfrak{e}^*})$ and $\theta \in \left(\tfrac{\varepsilon}{2}\sqrt{\mathcal{A}(q(0))}, \tfrac{3\varepsilon}{2}\sqrt{\mathcal{A}(q(0))}\right)$, combining \eqref{diff-m} with \eqref{int-v} yields
\begin{align*}
\frac{\mathrm{d}\boldsymbol{\mathsf{M}}_{\varepsilon}}{\mathrm{d}\theta}(\theta)
&\ge
2\theta
-
\varepsilon^2 \max_I |\mathcal{A}'| \max_{[0,b_0]} |q'|
\fint_{\Omega} \frac{\mathrm{d}v_{\theta,\varepsilon}}{\mathrm{d}\theta}\,\mathrm{d}x \\
&\ge
\frac{1}{\sqrt{\theta}}
\left(
2\theta\sqrt{\theta}
-
\varepsilon^2 C_f^* \max_I |\mathcal{A}'| \max_{[0,b_0]} |q'|
\right).
\end{align*}
By $\theta > \tfrac{\varepsilon}{2}\sqrt{\mathcal{A}(q(0))}$ and  \eqref{math-e}, we further obtain
\begin{align*}
\frac{\mathrm{d}\boldsymbol{\mathsf{M}}_{\varepsilon}}{\mathrm{d}\theta}(\theta)
\geq
\frac{\varepsilon\sqrt{\varepsilon}}{\sqrt{\theta}}
\left(
\frac{\big(\mathcal{A}(q(0))\big)^{3/4}}{\sqrt{2}}
-
\sqrt{\varepsilon}\, C_f^* \max_I |\mathcal{A}'| \max_{[0,b_0]} |q'|
\right)
> 0.
\end{align*}
Therefore, $\boldsymbol{\mathsf{M}}_{\varepsilon}$ is strictly increasing on 
$\left(\tfrac{\varepsilon}{2}\sqrt{\mathcal{A}(q(0))}, \tfrac{3\varepsilon}{2}\sqrt{\mathcal{A}(q(0))}\right)$ for all $\varepsilon \in (0,\boldsymbol{\mathfrak{e}^*})$.

Together with \eqref{lim-eta-eps} and \eqref{par-m}, this implies that, for each $\varepsilon \in (0,\boldsymbol{\mathfrak{e}^*})$, the equation $\boldsymbol{\mathsf{M}}_{\varepsilon}(\theta)=0$ admits a unique solution $\theta=\theta(\varepsilon)$.  This completes the proof of Lemma~\ref{lem-vy-map}.
\end{proof}

We are now ready to complete the proof of Proposition~\ref{existence}. 
Suppose, to the contrary, that there exists $\eps\in(0,\boldsymbol{{\mathfrak{e}}^*})$ such that the problem \eqref{maineq} with the boundary condition~\eqref{bd} admits at least two distinct solutions $u_1$ and $u_2$. 
Define
\begin{align*}
\widetilde{{\theta}}_i(\eps):=\eps\sqrt{\mathcal{A}\left(\fint_\Omega {q}(u_i)\,\dxx\right)}, \quad i=1,2.  
\end{align*}
Then, by \eqref{u-vye}, it follows that
\begin{align*}
u_i \equiv v_{\widetilde{{\theta}}_i(\eps),\eps}
\in \mathrm{C}^{1,\alpha}(\overline{\Omega}) \cap \mathrm{C}^{2,\alpha}({\Omega}),
\quad \text{and} \quad 
\boldsymbol{\mathsf{M}}_{\eps}(\widetilde{{\theta}}_i(\eps))=0.
\end{align*}
Since $u_1 \not\equiv u_2$, we have $\widetilde{{\theta}}_1(\eps)\neq \widetilde{{\theta}}_2(\eps)$. 
Thus, the equation $\boldsymbol{\mathsf{M}}_{\eps}({\theta})=0$ admits at least two distinct solutions, which contradicts Lemma~\ref{lem-vy-map}. 
Therefore, the solution to \eqref{maineq} with the boundary condition~\eqref{bd} is unique for all $\eps\in(0,\boldsymbol{{\mathfrak{e}}^*})$.

The estimate \eqref{int-est-u} follows immediately from \eqref{int-est-v} and \eqref{lim-eta-eps}, where $C_*>0$ is a constant independent of $\eps$, and behaves like $\frac{C^*}{\sqrt{\mathcal{A}(q(0))}}$ as $0<\eps \ll1$. 
It remains to establish \eqref{u-to-W}.

 Let $\delta(x):=\mathrm{dist}(x,\partial\Omega)$ (see the geometric property~(G-ii)). Then, by \eqref{maineq} and \eqref{ode-W} one may check that
  \begin{equation}\label{322-2021}
       \begin{aligned}
      \eps^2\Delta\Big(u(x)-W(\frac{\delta(x)}{\eps})\Big)
      =&\,\frac{f(u(x))}{\mathcal{A}(\fint_{\Omega}q(u(y))\,\mathrm{d}y)}-\frac{f(W(\frac{\delta(x)}{\eps}))}{\mathcal{A}(q(0))}-\eps{W'(\frac{\delta(x)}{\eps})}\Delta\delta(x)\\
      =&\,\frac{1}{\mathcal{A}(q(0))}\left(f(u(x))-f(W(\frac{\delta(x)}{\eps}))\right)+\mathcal{U}_{\eps}(x),
  \end{aligned}
  \end{equation}
  where
  \begin{align}\label{bbU}
   \mathcal{U}_{\eps}(x):=f(u(x))\left(\frac{1}{\mathcal{A}(\fint_{\Omega}q(u(y))\,\mathrm{d}y)}-\frac{1}{\mathcal{A}(q(0))}\right)-\eps{W'(\frac{\delta(x)}{\eps})}\Delta\delta(x).   
  \end{align}
   Here we have used the fact $|\nabla\delta(x)|=1$. 
   
To deal with $\mathcal{U}_{\eps}$, we first notice that \eqref{int-est-u} implies
  \begin{equation}\label{ae}
\mathcal{A}\left(\fint_\Omega q(u(y))\,\mathrm{d}y\right)=\mathcal{A}(q(0))+O(\eps)\quad\mathrm{as}\quad\eps\in(0,\boldsymbol{{\mathfrak{e}}^*}).
\end{equation}
Hence, for \eqref{bbU}, by \eqref{assume-f} and \eqref{ae}, we have the estimate
  \begin{align}\label{322-2021-0}
     |\mathcal{U}_{\eps}(x)|\leq\max_{[0,b_0]}f \left|\frac{1}{\mathcal{A}(q(0))+O(\eps)}-\frac{1}{\mathcal{A}(q(0))}\right|+\eps\sup_{[0,\infty)}|W'|\max_{\overline{\Omega}}|\Delta\delta|\leq{C}_3\eps,
  \end{align}
  for some positive constant $C_3$ independent of $\eps$. Since $u,W\in[0,b_0]$ (see  Lemma~\ref{lem-W}), \eqref{322-2021} and \eqref{322-2021-0} imply
  \begin{equation}\label{u-W-dieq}
   \begin{aligned}
      \eps^2\Delta\Big(u(x)-W(\frac{\delta(x)}{\eps})\Big)^2\geq&\,2\eps^2\Big(u(x)-W(\frac{\delta(x)}{\eps})\Big)\Delta\Big(u(x)-W(\frac{\delta(x)}{\eps})\Big)\\ \geq&\,\frac{2\min_{[0,b_0]}f'}{\mathcal{A}(q(0))}\Big(u(x)-W(\frac{\delta(x)}{\eps})\Big)^2-2C_3\eps\Big|u(x)-W(\frac{\delta(x)}{\eps})\Big|,
  \end{aligned}   
  \end{equation}
 where we recall again that  $\min_{[0,b_0]}f'>0$.
  
  On the other hand, since $\partial_{\vec{n}}\delta(x)=-1$ on $\partial\Omega$, by \eqref{bd} and \eqref{ode-W}, one obtains
  \begin{align}\label{u-w-bd}
      \Big(u(x)-W(\frac{\delta(x)}{\eps})\Big)^2+\frac{\gamma\eps}{2}\partial_{\vec{n}}\Big(u(x)-W(\frac{\delta(x)}{\eps})\Big)^2=0,\,\,x\in\partial\Omega.
  \end{align}
Since $\gamma \geq 0$, it follows from \eqref{322-2021}, \eqref{u-W-dieq} and \eqref{u-w-bd} that $(u(x)-W(\frac{\delta(x)}{\eps}))^2$ attains its (positive) maximum at some interior point $\widetilde{x}\in\Omega$. Consequently,
\begin{equation*}
 \left|u(x)-W(\frac{\delta(x)}{\eps})\right|
\leq 
\left|u(\widetilde{x})-W(\frac{\delta(\widetilde{x})}{\eps})\right|
\leq\frac{
C_3\eps}{\min_{[0,b_0]}f'},
\quad \forall\,x\in\overline{\Omega},  
\end{equation*}
which yields \eqref{u-to-W}. 
Combining the above results, we complete the proof of Proposition~\ref{existence}.
 
\begin{remark} It is worth highlighting that when $\gamma > 0$ in the boundary condition~\eqref{diff-bd}, the estimate \eqref{max-v-eta} can be further refined to 
\begin{align}\label{rk-diff-v}
\sup_{0<{\theta}<1} \int_{\Omega} \frac{\mathrm{d}v_{{\theta},\eps}}{\mathrm{d}{\theta}}\,\mathrm{d}x < \infty.
\end{align}
To see this, we integrate \eqref{new-diff-v} over $\Omega$. By invoking \eqref{diff-bd} and \eqref{diff-v}, we obtain
{\footnotesize\begin{align*}
\int_\Omega\frac{\mathrm{d}v_{{\theta},\eps}}{\mathrm{d}{\theta}}\dxx\leq\frac{1}{2C_{f'}}\int_{\Omega}f'(v_{{\theta},\eps})\frac{\mathrm{d}v_{{\theta},\eps}}{\mathrm{d}{\theta}}\dxx=\frac{1}{2C_{f'}}\left(\frac{2}{{\theta}}\int_{\Omega}f(v_{{\theta},\eps})\dxx-\frac{{\theta}^2}{\gamma\eps}\int_{\partial\Omega}\frac{\mathrm{d}v_{{\theta},\eps}}{\mathrm{d}{\theta}}\,\mathrm{d}\sigma_x\right)\leq\frac{1}{{\theta}C_{f'}}\int_{\Omega}f(v_{{\theta},\eps})\dxx.
\end{align*}}
Combining this with \eqref{assume-f} and \eqref{int-est-v} yields the desired estimate \eqref{rk-diff-v}.
\end{remark}
\begin{remark}[A sufficient condition for uniqueness of \eqref{maineq}--\eqref{bd} with special $\mathcal{A}$ and $q$]\label{rk-prop1}
Lemma~\ref{lem-vy-map} establishes the uniqueness of ${\theta}(\eps)$ for sufficiently small $\eps$ under general assumptions on $\mathcal{A}$ and $q$. In contrast, if one of the conditions in \eqref{ass-i-ii} is satisfied, the uniqueness of ${\theta}(\eps)$ holds for all $\eps>0$. This follows directly from \eqref{diff-m} and \eqref{diff-v}. Consequently, under either of the assumptions in \eqref{ass-i-ii}, the problem \eqref{maineq} with the boundary condition~\eqref{bd} admits a unique solution $u$ for all $\eps>0$.
\end{remark}

\section{\bf Proof of Theorem \ref{thm-nonlocal}}\label{sec-thm1.2} 

In what follows, we set
\begin{align}\label{b-eps}
    B_\e:= \fint_\Omega  q(u)\dxx.
\end{align}
It follows from \eqref{int-est-u} that 
$\limsup_{\eps\downarrow0}\frac1{\eps}|B_\e-q(0)|<\infty.$
To prove Theorem~\ref{thm-nonlocal}, it suffices to derive a refined asymptotic expansion of $\mathcal{A}(B_\varepsilon)$ in \eqref{ae} as $\varepsilon \downarrow 0$. We begin by establishing the following property for the solutions of \eqref{Phi-new} and \eqref{Psi-new}.
\begin{lemma}\label{lem-PhiPsi}
Assume \eqref{q-A}--\eqref{assume-f}. Then
 equations \eqref{Phi-new} and \eqref{Psi-new} have unique solutions. Moreover, we have that:
 \begin{itemize}
     \item[(i)]  $\Phi<0$ in $[0,\infty)$ and 
     \begin{align}
         \Phi'(t)+\frac{f(W(t))\Phi(t)}{\sqrt{2\mathcal{A}(q(0))F(W(t))}}&\,=-\sqrt{\frac{\mathcal{A}(q(0))F(W(t))}{2}},\quad\,for\,\,t\geq0,\label{iph-1}\\
         \Phi(0)=-\frac{\gamma \mathcal{A}(q(0)) F(b_*)}{\gamma f(b_*)+\sqrt{2\mathcal{A}(q(0)) F(b_*)}},& \quad \Phi'(0)=-\frac{\mathcal{A}(q(0))F(b_*)}{\gamma f(b_*)+\sqrt{2\mathcal{A}(q(0))F(b_*)}}.\label{iph-2}
     \end{align}
     \item[(ii)] $\Psi>0$ in $[0,\infty)$ and 
      \begin{align}
         \Psi'(t) +\frac{f(W(t))\Psi(t)}{\sqrt{2\mathcal{A}(q(0))F(W(t))}}&\,=\sqrt{\frac{\mathcal{A}(q(0))}{2F(W(t))}} G(W(t)),\quad\,for\,\,t\geq0,\label{ips-1}\\
         \Psi(0)=\frac{\gamma\mathcal{A}(q(0))G(b_*)}{\gamma f(b_*) + \sqrt{2\mathcal{A}(q(0))F(b_*)}},&\, \quad \Psi'(0)=\frac{\mathcal{A}(q(0))G(b_*)}{\gamma f(b_*)+\sqrt{2\mathcal{A}(q(0))F(b_*)}},\label{ips-2}
     \end{align}
     where $G(t)=\int^{t}_0\sqrt{\frac{2F(s)}{\mathcal{A}(q(0))}}\mathrm{d}s.$
\item[(iii)] There exist $\widetilde{C}$ and $\widetilde{M}$ independent of $\eps$ such that
\begin{align}\label{cw-exp}
  |\Phi(t)|+|\Phi'(t)| + |\Psi(t)|+|\Psi'(t)|\leq\widetilde{C}\mathrm{e}^{-\widetilde{M}t},\quad\,t\in[0,\infty).
\end{align}
 \end{itemize}
\end{lemma}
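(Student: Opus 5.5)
Throughout we write $a:=\mathcal{A}(q(0))$. The plan is to reduce both linear problems to first-order ODEs by reduction of order, using the explicit first integral of $W$ from Lemma~\ref{lem-W}.

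\textbf{The homogeneous solution.} Differentiating $aW''=f(W)$ shows that $W'$ solves the homogeneous equation $aY''=f'(W)Y$. Moreover, $W'<0$ and $W'$ decays exponentially by \eqref{w3w}. Since $f'(W)\ge \min_{[0,b_*]}f'>0$, any solution of $aY''=f'(W)Y$ with $Y(0)=Y'(0)\gamma$ and $Y\to0$ must vanish identically, by the maximum principle. This is because $a(Y^2)''\ge 2f'(W)Y^2$, so $Y^2$ has no positive interior maximum; and at $t=0$, if $Y(0)\neq0$, then $YY'(0)=Y(0)^2/\gamma>0$, which excludes a maximum there. (For $\gamma=0$ we simply have $Y(0)=0$.) This gives uniqueness for both \eqref{Phi-new} and \eqref{Psi-new}.

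\textbf{Existence via a first-order equation.} For existence, rather than variation of constants, I would verify directly that the first-order equations \eqref{iph-1} and \eqref{ips-1} are compatible with the second-order ones. Write $\mu(t):=\frac{f(W)}{\sqrt{2aF(W)}}=-\frac{W''}{W'}$, using $W'=-\sqrt{2F(W)/a}$ and $aW''=f(W)$. Then $Y'+\mu Y=g$ is equivalent to $(Y/W')'=g/W'$.

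Differentiating $Y'+\mu Y=g$ gives $Y''=-\mu'Y-\mu Y'+g'=-\mu'Y-\mu(g-\mu Y)+g'$. A computation gives $\mu^2-\mu'=f'(W)/a$, since $\mu=-W''/W'$ and $(W''/W')'=W'''/W'-(W''/W')^2$ with $W'''=f'(W)W'/a$. Hence
\[
aY''=f'(W)Y+a(g'-\mu g).
\]
For $g=-\sqrt{aF(W)/2}$, one computes $g'=-\sqrt{a/2}\,\frac{f(W)W'}{2\sqrt{F(W)}}=\frac{f(W)}{2}$. Also $\mu g=-\frac{f(W)}{2}$. So $a(g'-\mu g)=af(W)$, which matches \eqref{Phi-new}.

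For $g=\sqrt{a/(2F(W))}\,G(W)$, a similar chain-rule computation should give $a(g'-\mu g)=-\sqrt{2aF(W)}$. The key inputs are $G'(W)W'=-2F(W)/a$ and the fact that the $\mu$-term cancels the derivative of $F^{-1/2}$. This matches \eqref{Psi-new}.

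So I solve the first-order linear ODE
\[
Y(t)=W'(t)\Bigl(c+\int_0^t \frac{g}{W'}\,ds\Bigr).
\]
Here $g/W'$ is $\frac{\sqrt a}{2}\cdot a^{1/2}/\sqrt{\cdot}$-type bounded in the $\Phi$ case: explicitly $g/W'=a/2$ for $\Phi$. For $\Psi$, $g/W'=-\frac{a\,G(W)}{2F(W)}$, which is bounded near $W=0$ since $G(W)\sim cW^2$ and $F(W)\sim f'(0)W^2/2$.

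\textbf{Fixing the constant and the boundary values.} The constant $c$ is chosen so that $Y(0)-\gamma Y'(0)=0$. Using $Y'(0)=g(0)-\mu(0)Y(0)$, this becomes $Y(0)\bigl(1+\gamma\mu(0)\bigr)=\gamma g(0)$. Substituting $W(0)=b_*$ yields exactly \eqref{iph-2} and \eqref{ips-2}, and then $Y'(0)=Y(0)/\gamma$, or $Y'(0)=g(0)/(1+\gamma\mu(0))$ when $\gamma=0$.

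\textbf{Signs.} The sign statements follow from the explicit formula. In the $\Phi$ case, $Y/W'=c+at/2$ with $c=\Phi(0)/W'(0)\ge0$, so $Y/W'>0$ for $t>0$. Since $W'<0$, this gives $\Phi<0$; at $t=0$ with $\gamma=0$ we have $\Phi(0)=0$, so strict negativity holds on $(0,\infty)$, and the precise claim should be checked at that endpoint. In the $\Psi$ case the integrand has the opposite sign, so $\Psi>0$.

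\textbf{Decay (main obstacle).} The hardest step is the decay estimate (iii), because $Y$ grows like $W'(t)\,t$. Since $|W'|\le Ce^{-M_0t/2}$, we get $|Y|\le C(1+t)e^{-M_0t/2}\le \widetilde Ce^{-M_0t/4}$. Then $Y'=g-\mu Y$, where $\mu$ is bounded (as $\mu\to\sqrt{f'(0)/a}$) and $|g|\lesssim W\le b_*e^{-M_0t}$. This gives \eqref{cw-exp} with $\widetilde M=M_0/4$.

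The last things to check are the endpoint behaviour of $g$ and $\mu$ near $W=0$, which requires that $F(s)\ge \tfrac12(\min f')s^2$, and that the resulting solution indeed tends to $0$.
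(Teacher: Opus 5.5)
Your proposal is correct, but it proves the lemma by a different route from the paper's.

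\textbf{The paper's route.} The paper calls existence and uniqueness of $\Phi,\Psi$ standard and omits them. It then derives \eqref{iph-1} and \eqref{ips-1} in the direction second-order $\Rightarrow$ first-order, by integrating energy-type identities over $(t,\infty)$. For $\Phi$ it compares $\frac12\Phi'^2$ with $\frac12(\Phi'-aW')^2$, using $(\Phi-aW)''=f'(W)\Phi/a$. This tacitly assumes $\Phi'(t),\Psi'(t)\to0$ as $t\to\infty$. The signs come from an interior-extremum contradiction based on the first-order identity. The decay comes from the differential inequality $(-\Phi)'+M_1(-\Phi)\le Ce^{-M_0t/2}$ and an integrating factor, which gives the rate $\min\{M_1,M_0/2\}$ under the side condition $M_1\neq M_0/2$.

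\textbf{Your route.} You use that $W'$ is a decaying solution of the homogeneous linearized equation. You verify first-order $\Rightarrow$ second-order through $\mu^2-\mu'=f'(W)/a$, build the solution by reduction of order, and close the argument with your maximum-principle uniqueness proof.

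\textbf{What each buys.} Your route supplies the existence and uniqueness the paper omits, and it needs no a priori information on $\Phi',\Psi'$ at infinity. It also yields very simple closed forms, for example $\Phi(t)=\frac{\Phi(0)}{W'(0)}W'(t)+\frac a2\,tW'(t)$. These are exactly the formulas of Remark~\ref{rk-ph}, once one notices that $e^{-\mathfrak{X}(t)}=W'(t)/W'(0)$. From them the signs and the decay can be read off directly. The cost is a slightly worse decay rate, $M_0/4$, which is harmless here. The paper's route avoids having to spot $W'$ as a homogeneous solution, at the price of the tacit decay assumption.

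\textbf{Endpoint caveat.} Your caveat is right, and it applies to both functions. For $\gamma=0$, \eqref{iph-2} and \eqref{ips-2} give $\Phi(0)=\Psi(0)=0$. So the strict inequalities $\Phi<0$ and $\Psi>0$ hold only on $(0,\infty)$ in that case. The paper's assertion that \eqref{iph-2} yields $\Phi(0)<0$ is valid only for $\gamma>0$.
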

For the sake of brevity, the proof of Lemma~\ref{lem-PhiPsi} is deferred to Section~\ref{subap2}.

To capture the influence of boundary curvature on $u_\varepsilon$, we derive a refined asymptotic expansion beyond \eqref{u-to-W}. Let $\delta(x)=\mathrm{dist}(x,\partial\Omega)$, and denote by $\mathcal{H}_{\Gamma_{\delta(x)}}$ and $\mathcal{H}_{\partial\Omega}$ the mean curvatures of the parallel surface $\Gamma_{\delta(x)}$ and the boundary $\partial\Omega$, respectively, as introduced in (G-ii). We also make use of \eqref{ode-W}, \eqref{Phi-new}, \eqref{Psi-new}, and the definition of the nonlocal coefficient $B_\varepsilon$ in \eqref{b-eps}. The following estimate holds.

\begin{lemma}\label{lem2}
Let $\Omega_{d_*}$ be the subdomain of $\Omega$ defined via $d_*$ in \eqref{d-star}. Then, for each $x \in \overline{\Omega_{d_*}}$, there exists a unique $\sigma(x)\in\partial\Omega$ such that \eqref{x-to=sigma} holds. Moreover, for $0<\varepsilon\ll1$, there exists a constant $C_{d_*}>0$, independent of $\varepsilon$, such that
\begin{align}\label{moon-1}
\max_{x\in\overline{\Omega_{d_*}}}\left|u(x)- W(\frac{\delta(x)}{\e}) -\eps \boldsymbol{\Xi}_\e(\delta(x),\sigma(x)) \right| \le C_{d_*} \eps^2,
\end{align}
where
\begin{equation}\label{Xi0} \boldsymbol{\Xi}_\e(\delta(x),\sigma(x)):=
\frac{\mathcal{A}(q(0))-\mathcal{A}(B_\e)}{\eps(\mathcal{A}(q(0)))^2}
\Phi(\frac{\delta(x)}{\e})
+  (N-1)\mathcal{H}_{\Gamma_{\delta(x)}}(\sigma(x))
\Psi(\frac{\delta(x)}{\e}).
\end{equation}
In particular, if $x_\varepsilon \in \overline{\Omega_{d_*}}$ satisfies $\delta(x_\varepsilon)=O(\eps)$ as $0<\eps\ll1$, then the refined pointwise asymptotic expansion of $u(x_\eps)$ is given by
{\small\begin{align}\label{udstar}
    u(x_\eps)= W(\frac{\delta(x_\eps)}{\eps}) + \frac{\mathcal{A}(q(0))-\mathcal{A}(B_\e)}{\left(\mathcal{A}(q(0))\right)^2}\Phi(\frac{\delta(x_\eps)}{\eps})
+\eps (N-1) \mathcal{H}_{\partial\Omega}(\sigma(x_\eps))\Psi(\frac{\delta(x_\eps)}{\eps})+O(\eps^2).
\end{align}}
\end{lemma}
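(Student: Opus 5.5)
The plan is to construct the second-order approximation $Z_\eps(x):=W(\frac{\delta(x)}{\eps})+\eps\boldsymbol{\Xi}_\eps(\delta(x),\sigma(x))$ in $\overline{\Omega_{d_*}}$, compute the residual of \eqref{maineq}--\eqref{bd} on $Z_\eps$, and then run the same maximum-principle argument used for \eqref{u-to-W} (the inequality \eqref{u-W-dieq} for $(u-Z_\eps)^2$ together with the Robin condition), now on $\Omega_{d_*}$, with the inner boundary $\Gamma_{d_*}$ controlled by the exponential decay \eqref{int-est-u}, \eqref{w3w} and \eqref{cw-exp}. Write $\lambda_\eps:=\frac{\mathcal{A}(q(0))-\mathcal{A}(B_\eps)}{\eps(\mathcal{A}(q(0)))^2}$. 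By \eqref{ae} this quantity is bounded, which is all that will be needed; no refined information on $B_\eps$ is used here, since that is the content of Theorem~\ref{thm-nonlocal}.

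First, rewrite \eqref{maineq} as $\eps^2\Delta u=f(u)/\mathcal{A}(B_\eps)$ and expand
\begin{equation*}
\frac{1}{\mathcal{A}(B_\eps)}=\frac{1}{\mathcal{A}(q(0))}+\eps\lambda_\eps+O(\eps^2).
\end{equation*}
Use the curvilinear form \eqref{Laplace} with $t=\delta/\eps$. Applied to $W(t)$, this gives
\begin{equation*}
\eps^2\Delta W=W''-\eps(N-1)\mathcal{H}_{\Gamma_\delta}W'.
\end{equation*}
Applied to $\eps\boldsymbol{\Xi}_\eps$, it gives $\eps\big(\lambda_\eps\Phi''+(N-1)\mathcal{H}_{\Gamma_\delta}\Psi''\big)+O(\eps^2)$. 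The $O(\eps^2)$ remainder collects three sources: the $\delta$-derivatives of $\mathcal{H}_{\Gamma_\delta}$, which produce factors $\eps\,\partial_\delta\mathcal{H}\,\Psi'$ that are $O(\eps^2)$ after the overall $\eps$; the first-order term $\eps\mathcal{H}\,\partial_\delta$ acting on $\eps\boldsymbol{\Xi}$; and the tangential Beltrami term $\eps^3\Delta_{\Gamma_\delta}\mathcal{H}\,\Psi$. The $\mathrm{C}^4$ regularity of $\Gamma_\delta$ and the choice \eqref{d-star} keep all of these uniformly bounded.

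Next, Taylor-expand $f(Z_\eps)=f(W)+\eps f'(W)\boldsymbol{\Xi}_\eps+O(\eps^2)$, using $f\in\mathrm{C}^2$. Matching the order-$\eps$ terms and using \eqref{Phi-new} and \eqref{Psi-new}, the $\lambda_\eps$-terms cancel because $\Phi$ absorbs the source $\mathcal{A}(q(0))f(W)$. The curvature terms cancel because $\mathcal{A}(q(0))W'=-\sqrt{2\mathcal{A}(q(0))F(W)}$ by \eqref{w1w}, so the term $-(N-1)\mathcal{H}W'$ is exactly compensated by the source in \eqref{Psi-new}. The factor $\mathcal{A}(q(0))$ must be tracked carefully here; I would normalize by multiplying the equation through by $\mathcal{A}(q(0))$ before matching. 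The result is
\begin{equation*}
\eps^2\Delta(u-Z_\eps)=c_\eps(x)\,(u-Z_\eps)+R_\eps,
\end{equation*}
where $c_\eps\ge\min f'/\mathcal{A}(B_\eps)>0$ by the mean value theorem and $|R_\eps|\le C\eps^2$ in $\Omega_{d_*}$. On $\partial\Omega$, $\partial_{\vec n}=-\partial_\delta$. Since $W$, $\Phi$ and $\Psi$ satisfy the Robin conditions $\cdot-\gamma\,\cdot'=\cdot$ at $t=0$, the only boundary defect comes from $\partial_\delta\mathcal{H}_{\Gamma_\delta}$ hitting $\Psi$, which gives $(u-Z_\eps)+\gamma\eps\partial_{\vec n}(u-Z_\eps)=O(\eps^2)$. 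On $\Gamma_{d_*}$ we have $|u-Z_\eps|\le C\mathrm{e}^{-c/\eps}\le C\eps^2$.

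Then compare $u-Z_\eps$ with $\pm K\eps^2$. Since $c_\eps$ is bounded below, a constant barrier $K\eps^2$ handles the interior residual. It also handles the Robin defect when $\gamma\ge0$, because a positive maximum of $u-Z_\eps-K\eps^2$ on $\partial\Omega$ forces $\partial_{\vec n}\ge0$, which contradicts the boundary relation once $K$ is large. This gives \eqref{moon-1}. For \eqref{udstar}, when $\delta(x_\eps)=O(\eps)$ we have $\mathcal{H}_{\Gamma_{\delta}}(\sigma)=\mathcal{H}_{\partial\Omega}(\sigma)+O(\delta)$ by \eqref{me-H}, and $\Psi$ is bounded, so the replacement costs $O(\eps^2)$.

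I expect the main obstacle to be the bookkeeping of the residual, specifically verifying the exact cancellation of the $O(\eps)$ curvature terms with the right $\mathcal{A}(q(0))$ normalization, and showing that the $\delta$-dependence of $\mathcal{H}_{\Gamma_\delta}$ contributes only $O(\eps^2)$ uniformly up to $\Gamma_{d_*}$. The exponential decay \eqref{cw-exp} is what keeps the terms $t\Psi'(t)$ bounded there.
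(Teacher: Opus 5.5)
Your proposal is correct and follows essentially the paper's route. The paper sets $U_\eps=u-W-\bigl(\frac{1}{\mathcal{A}(B_\eps)}-\frac{1}{\mathcal{A}(q(0))}\bigr)\Phi-\eps(N-1)\mathcal{H}_{\Gamma_\delta}\Psi$ and derives $\eps^2\Delta U_\eps=\frac{f'(W)}{\mathcal{A}(q(0))}U_\eps+J_\eps$ with $|J_\eps|\lesssim\eps^2$; it linearizes at $W$ and uses \eqref{u-to-W} to control $J_{3,\eps},J_{4,\eps}$, where you use the mean value theorem instead, and it concludes by the maximum principle for $U_\eps^2$ on $\Omega_{d_*}$. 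Your observation that the Robin condition for $u-Z_\eps$ holds only up to the $O(\eps^2)$ defect $\gamma\eps^2(N-1)\Psi(0)\,\partial_\delta\mathcal{H}_{\Gamma_\delta}|_{\delta=0}$ (nonzero for curved $\partial\Omega$ when $\gamma>0$) is more careful than the paper's exact identity \eqref{u-w-bd1}, and your $\pm K\eps^2$ barrier absorbs that defect correctly.
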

\begin{proof}
For $x \in \Omega_{d_*}$, it follows from \eqref{Laplace}, \eqref{Phi-new}, and \eqref{Psi-new} that 
\begin{align}
    \e^2 \Delta \Phi(\frac{\delta(x)}{\e})=&\,\frac{f'(W(\frac{\delta(x)}{\e}))}{\mathcal{A}(q(0))}\Phi(\frac{\delta(x)}{\e})+f(W(\frac{\delta(x)}{\e})) - \e (N-1)\mathcal{H}_{\Gamma_{\delta(x)}}(\sigma(x)) \Phi'(\frac{\delta(x)}{\e}),\label{Phi-2}\\
\e^2 \Delta \Psi(\frac{\delta(x)}{\e})=&\,\frac{f'(W(\frac{\delta(x)}{\e}))}{\mathcal{A}(q(0))}\Psi(\frac{\delta(x)}{\e})+W'(\frac{\delta(x)}{\e}) - \e (N-1)\mathcal{H}_{\Gamma_{\delta(x)}}(\sigma(x)) \Psi'(\frac{\delta(x)}{\e}).\label{Psi-2}
\end{align}
Here we have used the fact that both $\Phi\!\left(\frac{\delta(x)}{\e}\right)$ and $\Psi\!\left(\frac{\delta(x)}{\e}\right)$ are constant along each level surface $\Gamma_{\delta}$, and hence their Beltrami--Laplacians vanish, i.e., $\Delta_{\Gamma_{\delta}}\Phi(\frac{\delta(x)}{\e})=0$ and $\Delta_{\Gamma_{\delta}}\Psi(\frac{\delta(x)}{\e})=0$.

We now outline a direct approach to analyze the asymptotic behavior of $u_\e$. 
Recalling the definition of $B_\e$ in \eqref{b-eps}, it follows from \eqref{Laplace} and \eqref{322-2021}--\eqref{bbU} that, for $x \in \Omega_{d_*}$,
\begin{align}\label{w-de}
 \eps^2\Delta\left(u(x)-W(\frac{\delta(x)}{\eps})\right)=&\,\frac{f(u(x))}{\mathcal{A}(B_{\eps})}-\frac{f(W(\frac{\delta(x)}{\eps}))}{\mathcal{A}(q(0))}+\eps (N-1)\mathcal{H}_{\Gamma_{\delta(x)}}(\sigma(x))W'(\frac{\delta(x)}{\eps})
\end{align}
where we have used the identity $\Delta\delta(x)=-(N-1)\mathcal{H}_{\Gamma_{\delta(x)}}(\sigma(x))$ in $\Omega_{d_*}$. We observe that the terms $f\!\left(W\!\left(\frac{\delta(x)}{\eps}\right)\right)$ and $W'\!\left(\frac{\delta(x)}{\eps}\right)$ also appear in \eqref{Phi-2} and \eqref{Psi-2}. 
Motivated by this structure, and by comparing the corresponding terms in \eqref{Phi-2}--\eqref{w-de}, we set
\begin{align}\label{new-u}
   U_{\eps}(x):= u(x)-W(\frac{\delta(x)}{\eps})-\left(\frac{1}{\mathcal{A}(B_\e)}-\frac{1}{\mathcal{A}(q(0))}\right)\Phi(\frac{\delta(x)}{\e})- \e (N-1) \mathcal{H}_{\Gamma_{\delta(x)}}(\sigma(x))\Psi(\frac{\delta(x)}{\e}).
\end{align}
Then for $x\in\Omega_{d_*}$, by \eqref{Phi-2}--\eqref{new-u}, one may check that
\begin{align*}
      \eps^2\Delta U_{\eps}(x)
     =&\,\eps^2\Delta\left(u(x)-W(\frac{\delta(x)}{\eps})\right)-\eps^2\left(\frac{1}{\mathcal{A}(B_\e)}-\frac{1}{\mathcal{A}(q(0))}\right)\Delta\Phi(\frac{\delta(x)}{\e})\notag\\
     &\,-\eps^3(N-1)\Delta\left(\mathcal{H}_{\Gamma_{\delta(x)}}(\sigma(x))\Psi(\frac{\delta(x)}{\e})\right)\notag\\
     =&
     \,\frac{f(u(x))}{\mathcal{A}(B_\e)}-\frac{f(W(\frac{\delta(x)}{\eps}))}{\mathcal{A}(q(0))} +\eps (N-1)\mathcal{H}_{\Gamma_{\delta(x)}}(\sigma(x))W'(\frac{\delta(x)}{\eps}) \notag \\
 & \, -\left(\frac{1}{\mathcal{A}(B_\e)}-\frac{1}{\mathcal{A}(q(0))}\right) \left(\frac{f'(W(\frac{\delta(x)}{\e}))}{\mathcal{A}(q(0))}\Phi(\frac{\delta(x)}{\e})+f(W(\frac{\delta(x)}{\e}))\right) \notag\\
& \, - \eps(N-1) \mathcal{H}_{\Gamma_{\delta(x)}}(\sigma(x))\left(\frac{f'(W(\frac{\delta(x)}{\e}))}{\mathcal{A}(q(0))}\Psi(\frac{\delta(x)}{\e})+W'(\frac{\delta(x)}{\e})\right)\\
&\,+\underbrace{\eps(N-1)\left(\frac{1}{\mathcal{A}(B_\e)}-\frac{1}{\mathcal{A}(q(0))}\right) \mathcal{H}_{\Gamma_{\delta(x)}}(\sigma(x))\Phi'(\frac{\delta(x)}{\e})}_{:=J_{1,\eps}(x)}\notag\\
&\,+\underbrace{\eps^3(N-1)\left[\mathcal{H}_{\Gamma_{\delta(x)}}(\sigma(x))\Delta\Psi(\frac{\delta(x)}{\e})-\Delta\left(\mathcal{H}_{\Gamma_{\delta(x)}}(\sigma(x))\Psi(\frac{\delta(x)}{\e})\right)\right]}_{:=J_{2,\eps}(x)}\notag\\
=&\,\frac{1}{\mathcal{A}(B_\e)}\left(f(u(x))-f(W(\frac{\delta(x)}{\eps}))\right) -\left(\frac{1}{\mathcal{A}(B_\e)}-\frac{1}{\mathcal{A}(q(0))}\right) \frac{f'(W(\frac{\delta(x)}{\e}))}{\mathcal{A}(q(0))}\Phi(\frac{\delta(x)}{\e})\notag \\
& \, - \e (N-1) \mathcal{H}_{\Gamma_{\delta(x)}}(\sigma(x)) \frac{f'(W(\frac{\delta(x)}{\e}))}{\mathcal{A}(q(0))}\Psi(\frac{\delta(x)}{\e})  + J_{1,\eps}(x)+J_{2,\eps}(x)\notag\\
     = &\, \frac{f'(W(\frac{\delta(x)}{\e}))}{\mathcal{A}(q(0))} U_{\eps}(x) +\underbrace{\left(\frac{1}{\mathcal{A}(B_\e)}-\frac{1}{\mathcal{A}(q(0))}\right)\left(f(u(x))-f(W(\frac{\delta(x)}{\eps}))\right)}_{:=J_{3,\eps}(x)}\notag\\
     &\,+\underbrace{\frac{1}{\mathcal{A}(q(0))}\left[f(u(x))-f(W(\frac{\delta(x)}{\eps}))-f'(W(\frac{\delta(x)}{\eps}))\left(u(x)-W(\frac{\delta(x)}{\eps})\right)\right]}_{:=J_{4,\eps}(x)}+J_{1,\eps}(x)+J_{2,\eps}(x).\notag
  \end{align*}
As a consequence,
 \begin{equation}\label{30U}
     \eps^2\Delta U_{\eps}(x)= \frac{f'(W(\frac{\delta(x)}{\e}))}{\mathcal{A}(q(0))} U_{\eps}(x)+   J_{\eps}(x),~ x\in\Omega_{d_*},~\text{where}~J_{\eps}:=\sum_{i=1}^4J_{i,\eps}.
 \end{equation}
Furthermore, by \eqref{assume-f} and \eqref{int-est-u} we have 
 \begin{equation}\label{cur-e}
\eps^2\Delta{U}^2_{\eps}
 \geq \frac{2\min_{[0,b_*]}f'}{\mathcal{A}(q(0))}{U}^2_{\eps}+2U_{\eps}J_{\eps}\geq \frac{\min_{[0,b_*]}f'}{\mathcal{A}(q(0))}{U}^2_{\eps}-\frac{\mathcal{A}(q(0))}{\min_{[0,b_*]}f'}J_{\eps}^2\quad\mathrm{in}\,\,\Omega_{d_*}.
 \end{equation}

 We next estimate $U_{\eps}^2$ on $\partial\Omega$. 
Since $\partial_{\vec{n}}\delta = -1$ on $\partial\Omega$, it follows from \eqref{bd}, \eqref{ode-W}, \eqref{Phi-new} and \eqref{Psi-new} that, for each $\sigma \in \partial\Omega$,
\begin{align*}
    U_{\eps}(\sigma)+\gamma\eps\partial_{\vec{n}}U_{\eps}(\sigma)=&\,b_0-(W(0)-\gamma{W}'(0))-\left(\frac{1}{\mathcal{A}(B_\e)}-\frac{1}{\mathcal{A}(q(0))}\right)(\Phi(0)-\gamma\Phi'(0))\\
    &\,-\eps(N-1)\mathcal{H}_{\partial\Omega}(\sigma)(\Psi(0)-\gamma\Psi'(0))=0.
\end{align*}
 This implies
\begin{equation}\label{u-w-bd1}
U_{\eps}^2 + \frac{\gamma\eps}{2}\,\partial_{\vec{n}}(U_{\eps}^2) = 0
\qquad \text{on } \partial\Omega.
\end{equation}

On the other hand, when $\delta(x)=d_*$, it follows from \eqref{int-est-u}, \eqref{w3w}, \eqref{ae}, and \eqref{cw-exp} that
\begin{equation}\label{bd*}
\begin{aligned}
U_{\eps}^2(x)
\leq&\,4\Biggl(
u^2(x)
+ W^2(\frac{d_*}{\eps})
+ \left(\frac{1}{\mathcal{A}(B_\e)}-\frac{1}{\mathcal{A}(q(0))}\right)^2
\Phi^2(\frac{d_*}{\e}) \\
&\quad+ \eps^2 (N-1)^2 \bigl(\mathcal{H}_{\Gamma_{d_*}}(\sigma(x))\bigr)^2
\Psi^2(\frac{d_*}{\e})
\Biggr) \\
\lesssim&\, \exp\!\left(-M\frac{d_*}{\eps}\right)
\qquad \text{for some } M>0 \text{ independent of } \eps.
\end{aligned}
\end{equation}
Consequently, if $U_{\eps}^2$ attains its maximum on $\partial\Omega_{d_*}$, then \eqref{u-w-bd1}--\eqref{bd*} yield
\[
\max_{\overline{\Omega_{d_*}}} |U_{\eps}|
\lesssim \exp\!\left(-\frac{M d_*}{2\eps}\right)
\ll \eps^2
\qquad \text{as } 0<\eps\ll1.
\]

It remains to consider the case where $U_{\eps}^2$ attains its maximum at an interior point of $\Omega_{d_*}$. 
In this case, combining the estimate \eqref{cur-e} with the maximum principle yields
\begin{align}\label{UaJ}
\max_{\overline{\Omega_{d_*}}}|U_{\eps}|
\leq
\frac{\mathcal{A}(q(0))}{\min_{[0,b_*]}f'}
\max_{\overline{\Omega_{d_*}}}|J_{\eps}|.
\end{align}
Note that both $\Delta\mathcal{H}_{\Gamma_{\delta(x)}}(\sigma(x))$ and $|\nabla\mathcal{H}_{\Gamma_{\delta(x)}}(\sigma(x))|$ are uniformly bounded in $\overline{\Omega_{d_*}}$, since $\Omega$ is bounded and $\partial\Omega$ is sufficiently smooth.
We now establish the following estimates to derive an upper bound for $\max_{\overline{\Omega_{d_*}}}|J_{\eps}|$:
  \begin{align*}
      \max_{x\in\overline{\Omega_{d_*}}}|J_{1,\eps}(x)|\leq&\,\frac{2\eps}{N-1}\left|\frac{1}{\mathcal{A}(B_\e)}-\frac{1}{\mathcal{A}(q(0))}\right|\left(\sum_{i=1}^{N-1}\max_{\sigma\in\partial\Omega}|\kappa_i(\sigma)|\right)\sup_{[0,\infty)}|\Phi'|\\
      \lesssim&\,\eps^2\quad(\mathrm{by\,\,\eqref{d-star},\,\,\eqref{me-H},\,\,\eqref{ae}\,\,and\,\,Lemma~\ref{lem-PhiPsi}(i)});\\[1em]
      \max_{x\in\overline{\Omega_{d_*}}}|J_{2,\eps}(x)|=&\,\eps^3(N-1)\max_{x\in\overline{\Omega_{d_*}}}\left|\Psi(\frac{\delta(x)}{\e})\Delta\mathcal{H}_{\Gamma_{\delta(x)}}(\sigma(x))+\frac{2}{\eps}\Psi'(\frac{\delta(x)}{\e})\nabla\mathcal{H}_{\Gamma_{\delta(x)}}(\sigma(x))\cdot\nabla\delta(x)\right|\\
      \lesssim&\,\eps^2\quad\mathrm{(by\,\,Lemma~\ref{lem-PhiPsi}(ii))};\\[1em]
      \max_{x\in\overline{\Omega_{d_*}}}|J_{3,\eps}(x)|\leq&\,\left|\frac{1}{\mathcal{A}(B_\e)}-\frac{1}{\mathcal{A}(q(0))}\right|\left(\max_{[0,b_*]}f'\right)\max_{x\in\overline{\Omega_{d_*}}}\left|u(x)-W(\frac{\delta(x)}{\eps})\right|\\
      \lesssim&\,\eps^2\quad\mathrm{(by\,\,\eqref{assume-f},\,\,\eqref{u-to-W},\,\,\eqref{w1w}\,\,and\,\,\eqref{ae})};\\[1em]
       \max_{x\in\overline{\Omega_{d_*}}}|J_{4,\eps}(x)|\leq&\,\frac{1}{\mathcal{A}(q(0))}\left(\max_{[0,b_*]}|f''|\right)\max_{x\in\overline{\Omega_{d_*}}}\left(u(x)-W(\frac{\delta(x)}{\eps})\right)^2\lesssim\eps^2\quad\mathrm{(by\,\,\eqref{u-to-W}\,\,and\,\,\eqref{w1w})}.
  \end{align*}
 Here we have used the estimate $\delta(x)\leq d_*\leq\min_{1 \le i \le N-1}1/{\left(1 + 2 
 \sup_{\sigma \in \partial\Omega}|\kappa_i(\sigma)|\right)}$ in $\overline{\Omega_{d_*}}$ (by \eqref{d-star}).  With these bounds at hand, \eqref{UaJ} yields
\begin{align}\label{JJ}
\max_{\overline{\Omega_{d_*}}}|U_{\eps}|
\leq
\frac{\mathcal{A}(q(0))}{\min_{[0,b_*]}f'}
\max_{\overline{\Omega_{d_*}}}|J_{\eps}|
\leq\frac{\mathcal{A}(q(0))}{\min_{[0,b_*]}f'}
\max_{\overline{\Omega_{d_*}}} \sum_{i=1}^4 |J_{i,\eps}|
\lesssim \eps^2.
\end{align}
Combining \eqref{JJ} with the refined asymptotic estimate $\frac{1}{\mathcal{A}(B_\e)}-\frac{1}{\mathcal{A}(q(0))}
=\frac{\mathcal{A}(q(0))-\mathcal{A}(B_\e)}{\left(\mathcal{A}(q(0))\right)^2}
+O(\eps^2)$ as $0<\eps\ll1$, which follows directly from \eqref{ae}, we conclude that \eqref{moon-1}--\eqref{Xi0} holds.

Let $x_{\varepsilon}\in\overline{\Omega_{d_*}}$ depend on $\varepsilon$ and satisfy 
$\delta(x_{\varepsilon})=O(\varepsilon)$ as $0<\varepsilon\ll1$. 
Then, by \eqref{d-star} and \eqref{me-H}, we have $\big|\mathcal{H}_{\partial\Omega}(\sigma(x_{\varepsilon}))
-\mathcal{H}_{\Gamma_{\delta(x_{\varepsilon})}}(\sigma(x_{\varepsilon}))\big|
\lesssim \delta(x_{\varepsilon})
\lesssim \varepsilon.$ Combining this estimate with \eqref{moon-1}, we obtain \eqref{udstar}. 
This completes the proof of Lemma~\ref{lem2}.
\end{proof}

\subsection{ Completion of the proof of Theorem~\ref{thm-nonlocal}}

\eqref{moon-1} indicates that, for sufficiently small $\varepsilon>0$, 
the nonlocal contribution arising from $\mathcal{A}(B_\varepsilon)-\mathcal{A}(q(0))=O(\varepsilon)$ 
plays a significant role in the geometric aspects of the asymptotic behavior of $u$. 
Therefore, a central analytical task is to determine the precise leading-order behavior of 
$\varepsilon^{-1}\big(\mathcal{A}(B_\varepsilon)-\mathcal{A}(q(0))\big)$ as $\varepsilon \downarrow 0$. 
To this end, we recall the following asymptotic expansion based on the coarea formula.
\begin{lemma}[cf.~Lemma~2.1 of \cite{LMWY2025}]\label{coarea-lem}
Let $h:\overline{\Omega}\to\mathbb{R}$ be a smooth function. Then for each $d\in(0,d_*]$, 
the domain $\Omega_d\subset\Omega_{d_*}$ satisfies property (G-i). Moreover, for each $x\in\Omega_d$, 
$h(x)$ can be expressed as $h(\sigma(x)-\delta(x)\vec{n}(\sigma(x)))=\widetilde{h}(\delta(x), \sigma(x))$ in the sense of \eqref{x-to=sigma}. 
The following expansion holds:
\begin{equation}\label{coarea}
\int_{\Omega_d} h(x)\,\mathrm{d}x 
=
\int_0^d \int_{\partial\Omega} 
\widetilde{h}(\delta, \sigma) 
\Big(1-(N-1)\mathcal{H}_{\partial\Omega}(\sigma)\,\delta\Big) 
\, \mathrm{d}\sigma \mathrm{d}\delta+O(d^3),
\end{equation}
where $O(d^3)$ denotes a quantity bounded by $C d^3$ for some constant $C>0$ independent of $d\in[0,d_*]$.
\end{lemma}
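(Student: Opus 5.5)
The statement to prove is Lemma~\ref{coarea-lem}, the tubular-neighbourhood expansion \eqref{coarea}; the plan is to derive it from the change of variables in (G-ii).

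\textbf{Step 1: change of variables.} For $d\le d_*\le d_0$, property (G-i) holds on $\Omega_d$, since $\Omega_d\subset\Omega_{d_0}$. The map $(\delta,\sigma)\mapsto \sigma-\delta\vec n(\sigma)$ is a $\mathrm{C}^4$-diffeomorphism from $(0,d)\times\partial\Omega$ onto $\Omega_d$ by (G-ii). I would compute its Jacobian in principal coordinates at $\sigma$. The differential of $\sigma\mapsto\vec n(\sigma)$ is the shape operator, with eigenvalues $\kappa_i(\sigma)$. Hence the tangential directions are stretched by the factors $1-\kappa_i(\sigma)\delta$, with the sign convention consistent with \eqref{me-H}. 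The normal direction has unit length because $|\nabla\delta|=1$. Therefore
\begin{equation*}
\int_{\Omega_d}h\,\mathrm{d}x=\int_0^d\int_{\partial\Omega}\widetilde h(\delta,\sigma)\prod_{i=1}^{N-1}\bigl(1-\kappa_i(\sigma)\delta\bigr)\,\mathrm{d}\sigma\,\mathrm{d}\delta .
\end{equation*}
Equivalently, one can apply the coarea formula with $|\nabla\delta|=1$ and use the area element of $\Gamma_\delta$ relative to $\partial\Omega$.

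\textbf{Step 2: expand the Jacobian.} Write
\begin{equation*}
\prod_{i=1}^{N-1}(1-\kappa_i\delta)=1-\delta\sum_i\kappa_i+R(\delta,\sigma)=1-(N-1)\mathcal H_{\partial\Omega}(\sigma)\delta+R(\delta,\sigma),
\end{equation*}
where $R$ collects the terms of order at least two in $\delta$. Since $|\kappa_i|$ is uniformly bounded on the compact smooth surface $\partial\Omega$ and $\delta\le d_*<1$, we get $|R|\le C\delta^2$ with $C$ depending only on $N$ and $\sup|\kappa_i|$.

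\textbf{Step 3: bound the remainder.} The error term is bounded by
\begin{equation*}
\sup|h|\;|\partial\Omega|\int_0^d C\delta^2\,\mathrm{d}\delta\lesssim d^3 .
\end{equation*}
The constant depends on $\sup_{\overline\Omega}|h|$ and the geometry but not on $d$. The main point requiring care is this uniformity. It holds because $h$ is bounded on $\overline\Omega$, and the choice \eqref{d-star} keeps $1-\kappa_i\delta\ge \tfrac12$. That lower bound guarantees the map stays a diffeomorphism and the Jacobian stays positive, so no singularity of the parallel surfaces occurs.

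In applications $h$ will depend on $\varepsilon$ (e.g.\ $q(u)-q(0)$). The constant in the $O(d^3)$ term involves only $\sup|h|$, so it stays uniform provided $h$ is uniformly bounded in $\varepsilon$.
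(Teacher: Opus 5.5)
Your proof is correct and takes essentially the route the paper points to. The paper omits the proof, citing Lemma~2.1 of \cite{LMWY2025} and presenting \eqref{coarea} as a coarea-formula generalization of Weyl's tube formula \eqref{weyl}; your argument (exact Jacobian $\prod_{i=1}^{N-1}(1-\kappa_i(\sigma)\delta)$, keep the linear term $-(N-1)\mathcal{H}_{\partial\Omega}(\sigma)\delta$, bound the quadratic remainder by $C\sup_{\overline{\Omega}}|h|\,|\partial\Omega|\,d^3$) is that standard derivation. One small precision: global injectivity of $(\delta,\sigma)\mapsto\sigma-\delta\vec n(\sigma)$ on $(0,d)\times\partial\Omega$ comes from $d\le d_*\le d_0$ in (G-ii), whereas the bound $1-\kappa_i\delta\ge\tfrac12$ from \eqref{d-star} only gives positivity of the Jacobian.
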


 \eqref{coarea} is a generalization of Weyl's tube formula~\cite{W1939}:
 \begin{equation}\label{weyl}
  |\Omega_d| = d|\partial\Omega| - \frac{d^2}{2}(N-1)\int_{\partial\Omega}\mathcal{H}_{\partial\Omega}(\sigma) \text{d}\sigma + O(d^3),~\text{as}~0<d\ll 1.   
 \end{equation}
A proof of \eqref{coarea} can be found in \cite[Lemma~2.1]{LMWY2025}, and is therefore omitted here.

To complete the proof of Theorem~\ref{thm-nonlocal}, we proceed in two steps.\\

\noindent
{\bf Step~1 - Refining the asymptotic expansion of the nonlocal term.}

 In this step, we derive a refined asymptotic expansion of 
$B_\varepsilon=\fint_{\Omega}q(u)\,\mathrm{d}x$ as $\varepsilon\to0^+$ 
by combining \eqref{moon-1}--\eqref{Xi0} with the coarea expansion \eqref{coarea}. 
We begin by establishing estimates for $q(u)$ in $\Omega$. 
Recall that $u(x)\in(0,b_0)$ and $W(t)\in(0,b_*]$. 
For any fixed $d\in(0,d_*)$, it follows from \eqref{u-to-W}, \eqref{ae}, and Lemma~\ref{lem2} that

\begin{align}\label{*qu*}
\max_{x\in\overline{\Omega_{d}}}\left|q(u(x))-q(W(\frac{\delta(x)}{\e}))-\eps  q'(W(\frac{\delta(x)}{\e}))\boldsymbol{\Xi}_\e(\delta(x),\sigma(x))\right|\lesssim\e^2, 
\end{align}
where $\boldsymbol{\Xi}_\e(\delta(x),\sigma(x))$ describing the geometric effect is defined by \eqref{Xi0}. Note that \eqref{int-est-u} yields
\begin{align}\label{qu0}
\sup_{x\in\overline{\Omega}\setminus\overline{\Omega_{d}}}\left|q(u(x))-q(0)\right|
\lesssim \mathrm{e}^{-\frac{C_*}{\eps}d}.
\end{align}
Since $B_{\varepsilon}=\fint_{\Omega}q(u)\,\mathrm{d}x$, 
 \eqref{*qu*} and \eqref{qu0} indicate that a refined asymptotic expansion of 
$\int_{\Omega}q(u)\,\mathrm{d}x$ can be obtained by decomposing the domain into the boundary layer region and the interior. 
To this end, we choose $d_{\varepsilon}\in(0,d_*)$ depending on $\varepsilon$ such that
\begin{equation}\label{d-infi}
\lim_{\varepsilon\downarrow0}\varepsilon^{-1}d_\varepsilon=\infty
\quad\text{and}\quad
\lim_{\varepsilon\downarrow0}\varepsilon^{-\frac{2}{3}}d_\varepsilon=0,
\end{equation}
where the latter condition is crucial for the subsequent estimates. Under this choice, the contribution from $\Omega\setminus\overline{\Omega_{d_{\eps}}}$ is exponentially small, namely~$\int_{\Omega\setminus\overline{\Omega_{d_{\eps}}}}(q(u)-q(0))\,\mathrm{d}x
=O\left(\mathrm{e}^{-\frac{C_*}{\eps}d_{\eps}}\right).$
Therefore, using \eqref{b-eps} together with the expansion \eqref{*qu*} in $\Omega_{d_{\eps}}$, we obtain
\begin{equation}\label{qB*}
\begin{aligned}
\left(B_\e-q(0)\right)|\Omega|
=&\,\left\{\int_{\Omega_{d_{\eps}}}+\int_{\Omega\setminus\overline{\Omega_{d_{\eps}}}}\right\}(q(u)-q(0))\,\mathrm{d}x \\
=&\,\int_{\Omega_{d_{\eps}}}(q(u)-q(0))\,\mathrm{d}x
+O\left(\mathrm{e}^{-\frac{C_*}{\eps}d_{\eps}}\right)\\
=&\,\int_{\Omega_{d_{\eps}}} \left(q(W(\frac{\delta(x)}{\e}))-q(0)\right)\mathrm{d}x\\
&+\frac{\mathcal{A}(q(0))-\mathcal{A}(B_\e)}{(\mathcal{A}(q(0)))^2}
\int_{\Omega_{d_{\eps}}} q'(W(\frac{\delta(x)}{\e}))
\Phi(\frac{\delta(x)}{\e})\mathrm{d}x \\
&+\e (N-1)
\int_{\Omega_{d_{\eps}}}
q'(W(\frac{\delta(x)}{\e}))
\Psi(\frac{\delta(x)}{\e})
\mathcal{H}_{\Gamma_{\delta(x)}}(\sigma(x))\,\mathrm{d}x + O(\e^2)|\Omega_{d_{\eps}}|.
\end{aligned}
\end{equation}
We proceed by applying Lemma~\ref{coarea-lem} to each term in \eqref{qB*} to derive a refined asymptotic expansion of $B_{\eps}$ as $\eps\to0^+$. Firstly, one may check that

\begin{equation}\label{qw-1st}
    \begin{aligned}
    \int_{\Omega_{d_{\eps}}}&\, \left(q(W(\frac{\delta(x)}{\e}))-q(0)\right)\mathrm{d}x\\
    =&\,\int_0^{d_{\eps}} \int_{\partial\Omega} \left(q(W(\frac{\delta}{\e}))-q(0)\right) \Big(1-(N-1)\mathcal{H}_{\partial\Omega}(\sigma)\delta \Big)\, \mathrm{d}\sigma \mathrm{d}\delta+O(d_\eps^3)\\  =&\,|\partial\Omega|\int_0^{d_{\eps}}  \left(q(W(\frac{\delta}{\e}))-q(0)\right)\,  \mathrm{d}\delta\\
    &\,\,-(N-1)\int_{\partial\Omega}\mathcal{H}_{\partial\Omega}(\sigma)\, \mathrm{d}\sigma\int_0^{d_{\eps}}\delta\left(q(W(\frac{\delta}{\e}))-q(0)\right)\mathrm{d}\delta+O(d_\eps^3)\\ =&\,\eps|\partial\Omega|\int_0^{\frac{d_{\eps}}{\eps}}  \left(q(W(t)-q(0)\right)\,  \mathrm{d}t\\
    &-\eps^2(N-1)\int_{\partial\Omega}\mathcal{H}_{\partial\Omega}(\sigma)\, \mathrm{d}\sigma\int_0^{\frac{d_{\eps}}{\eps}}t\left(q(W(t))-q(0)\right)\mathrm{d}t+O(d_\eps^3),
    \end{aligned}
\end{equation}
and
\begin{equation}\label{qw-2nd}
\begin{aligned}
\int_{\Omega_{d_{\eps}}} q'(W(\frac{\delta(x)}{\e}))\Phi(\frac{\delta(x)}{\e})\,\mathrm{d}x
    =&\,\eps|\partial\Omega|\int_0^{\frac{d_{\eps}}{\eps}} q'(W(t))\Phi(t)\,  \mathrm{d}t\\
    &\,-\eps^2(N-1)\int_{\partial\Omega}\mathcal{H}_{\partial\Omega}(\sigma)\, \mathrm{d}\sigma\int_0^{\frac{d_{\eps}}{\eps}}tq'(W(t))\Phi(t)\,\mathrm{d}t+O(d_\eps^3).
\end{aligned}
\end{equation}
Note also that $\max\limits_{x\in\Omega_{d_{\eps}}}|\mathcal{H}_{\Gamma_{\delta(x)}}(\sigma(x))-\mathcal{H}_{\partial\Omega}(\sigma(x))|=O(d_{\eps})$ (by \eqref{d-star} and \eqref{me-H}). Thus, we have
\begin{equation}\label{qw-3rd}
\begin{aligned}
 \int_{\Omega_{d_{\eps}}}&\,  q'(W(\frac{\delta(x)}{\e}))\Psi(\frac{\delta(x)}{\e}) \mathcal{H}_{\Gamma_{\delta(x)}}(\sigma(x))\,\mathrm{d}x\\
 =&\,\int_{\Omega_{d_{\eps}}}  q'(W(\frac{\delta(x)}{\e}))\Psi(\frac{\delta(x)}{\e}) \left(\mathcal{H}_{\partial\Omega}(\sigma(x))+O(d_{\eps})\right)\mathrm{d}x\\
=&\,\eps\left(\int_{\partial\Omega}\mathcal{H}_{\partial\Omega}(\sigma)\mathrm{d}\sigma+O(d_{\eps})\right) \left( \int_0^{\frac{d_{\eps}}{\eps}}q'(W(t))\Psi(t)\,  \mathrm{d}t+O(d_\eps^3)\right)\\
&-\eps^2(N-1)\left(\int_{\partial\Omega}\mathcal{H}^2_{\partial\Omega}(\sigma)\, \mathrm{d}\sigma+O(d_{\eps})\right)\int_0^{\frac{d_{\eps}}{\eps}} tq'(W(t))\Psi(t)\, \mathrm{d}t.
\end{aligned}
\end{equation}

To derive the refined asymptotic expansion of $B_{\varepsilon}$ from \eqref{qB*}--\eqref{qw-3rd}, we first establish the following preliminary estimates.
\begin{lemma}\label{qw-lem}
Let $d_{\eps}$ satisfy \eqref{d-infi}. Then 
    \begin{align}
    \int_0^{\frac{d_{\eps}}{\eps}}  \left(q(W(t)-q(0)\right)\,  \mathrm{d}t=&\,\sqrt{\mathcal{A}(q(0))} \mathcal{Q}_F(b_*) +O(\mathrm{e}^{-M_0\frac{d_{\eps}}{\eps}}),\label{0qW}\\
    \int_0^{\frac{d_{\eps}}{\eps}}  t\left(q(W(t)-q(0)\right)\,  \mathrm{d}t=&\,\sqrt{\mathcal{A}(q(0))}\int_0^\infty \mathcal{Q}_F(W(t))\,\mathrm{d}t+O(\frac{d_{\eps}}{\eps}\mathrm{e}^{-M_0\frac{d_{\eps}}{\eps}}),\label{1qW}\\
    \int_0^{\frac{d_{\eps}}{\eps}}  t^2\left(q(W(t)-q(0)\right)\,  \mathrm{d}t=&\,2{\mathcal{A}(q(0))}\int_0^\infty \widetilde{\mathcal{Q}}_F(W(t))\,\mathrm{d}t+O(\frac{d_{\eps}^2}{\eps^2}\mathrm{e}^{-M_0\frac{d_{\eps}}{\eps}}),\label{2qW}\\
   \int_0^{\frac{d_{\eps}}{\eps}}t^kq'(W(t))\Phi(t)\,\mathrm{d}t    =&\,\int_0^{\infty}t^kq'(W(t))\Phi(t)\,\mathrm{d}t+O(\frac{d_{\eps}^k}{\eps^k}\mathrm{e}^{-\widetilde{M}\frac{d_{\eps}}{\eps}}),\label{kqh}\\
    \int_0^{\frac{d_{\eps}}{\eps}}t^kq'(W(t))\Psi(t)\,\mathrm{d}t
    =&\,\int_0^{\infty}t^kq'(W(t))\Psi(t)\,\mathrm{d}t+O(\frac{d_{\eps}^k}{\eps^k}\mathrm{e}^{-\widetilde{M}\frac{d_{\eps}}{\eps}}),\quad\,k=0,1,2,\label{kqs}
\end{align}
where $M_0$ is a positive constant, independent of $\varepsilon$, defined in Lemma~\ref{lem-W}; $\mathcal{Q}_F$ is given by \eqref{Q-F}, $\widetilde{\mathcal{Q}}_F(t):=\int_0^t\frac{\mathcal{Q}_F(s)}{\sqrt{2F(s)}}\,\mathrm{d}s$
and $\widetilde{M}$ is defined in \eqref{cw-exp}.
\end{lemma}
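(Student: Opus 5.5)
The plan is to write every finite integral as the full integral over $[0,\infty)$ minus a tail over $[d_\eps/\eps,\infty)$. The full integrals are then evaluated exactly through the first integral of \eqref{ode-W}. The tails are bounded by the exponential decay of $W$, $\Phi$, $\Psi$ from \eqref{w3w} and \eqref{cw-exp}. The condition $d_\eps/\eps\to\infty$ from \eqref{d-infi} guarantees that these tails vanish.

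\textbf{Tail bounds.} Since $q\in\mathrm{C}^1$ and $0<W\le b_*$, we have $|q(W(t))-q(0)|\le \max_{[0,b_*]}|q'|\,W(t)\le C\mathrm{e}^{-M_0t}$. For $T:=d_\eps/\eps\ge1$,
\[\int_T^\infty t^k\mathrm{e}^{-M_0t}\,\mathrm{d}t\lesssim T^k\mathrm{e}^{-M_0T},\qquad k=0,1,2.\]
This gives the error terms in \eqref{0qW}--\eqref{2qW}. In the same way, $|q'(W)\Phi|+|q'(W)\Psi|\le C\mathrm{e}^{-\widetilde M t}$ by \eqref{cw-exp}, which gives \eqref{kqh}--\eqref{kqs} directly; no explicit evaluation of those full integrals is needed.

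\textbf{Exact evaluation for \eqref{0qW}.} By \eqref{w1w}, $W'=-\sqrt{2F(W)/\mathcal{A}(q(0))}<0$, so $W$ is a decreasing bijection of $[0,\infty)$ onto $(0,b_*]$. Substituting $s=W(t)$, so that $\mathrm{d}t=-\sqrt{\mathcal{A}(q(0))}\,\mathrm{d}s/\sqrt{2F(s)}$, we get
\[\int_0^\infty\bigl(q(W(t))-q(0)\bigr)\mathrm{d}t=\sqrt{\mathcal{A}(q(0))}\int_0^{b_*}\frac{q(s)-q(0)}{\sqrt{2F(s)}}\,\mathrm{d}s=\sqrt{\mathcal{A}(q(0))}\,\mathcal{Q}_F(b_*).\]
The integrand behaves like $|q'(0)|\,s/(\sqrt{f'(0)}\,s)$ near $s=0$ because $F(s)\sim f'(0)s^2/2$, so it is bounded there.

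\textbf{Moments via integration by parts, for \eqref{1qW} and \eqref{2qW}.} By the same change of variables,
\[\frac{\mathrm{d}}{\mathrm{d}t}\mathcal{Q}_F(W(t))=-\frac{q(W(t))-q(0)}{\sqrt{\mathcal{A}(q(0))}},\]
so $q(W)-q(0)=-\sqrt{\mathcal{A}(q(0))}\,\frac{\mathrm{d}}{\mathrm{d}t}\mathcal{Q}_F(W)$. Integrating $t\cdot$ this by parts on $[0,\infty)$, the boundary terms vanish: at $0$ because of the factor $t$, and at infinity because $\mathcal{Q}_F(W(t))=O(W(t))=O(\mathrm{e}^{-M_0t})$, since $\mathcal{Q}_F\in\mathrm{C}^1$ and $\mathcal{Q}_F(0)=0$. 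This gives $\sqrt{\mathcal{A}(q(0))}\int_0^\infty\mathcal{Q}_F(W)\,\mathrm{d}t$, which is \eqref{1qW}.

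For $k=2$, integrating by parts once produces $2\sqrt{\mathcal{A}(q(0))}\int_0^\infty t\,\mathcal{Q}_F(W)\,\mathrm{d}t$. By the chain rule again,
\[\frac{\mathrm{d}}{\mathrm{d}t}\widetilde{\mathcal{Q}}_F(W(t))=-\frac{\mathcal{Q}_F(W(t))}{\sqrt{\mathcal{A}(q(0))}}.\]
A second integration by parts then yields $2\mathcal{A}(q(0))\int_0^\infty\widetilde{\mathcal{Q}}_F(W)\,\mathrm{d}t$. The boundary terms vanish because $\widetilde{\mathcal{Q}}_F(s)=O(s)$ near $0$, as $\mathcal{Q}_F(s)/\sqrt{2F(s)}$ is bounded.

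\textbf{Main obstacle.} The delicate point is the behaviour near $s=0$. One must verify that $\mathcal{Q}_F$ and $\widetilde{\mathcal{Q}}_F$ are $\mathrm{C}^1$ up to $0$ and vanish linearly there, so that $\mathcal{Q}_F(W(t))$ and $\widetilde{\mathcal{Q}}_F(W(t))$ decay like $\mathrm{e}^{-M_0t}$. This follows from $F(s)\ge\frac12(\min_{[0,b_*]} f')\,s^2$. With that in hand, each truncated integral equals the full one up to the stated tails, which completes the proof.
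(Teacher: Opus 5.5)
Your proposal is correct and is essentially the paper's argument. It uses the same ingredients: the substitution $s=W(t)$ via \eqref{w1w}, integration by parts against $\frac{\mathrm{d}}{\mathrm{d}t}\mathcal{Q}_F(W(t))=-\frac{q(W(t))-q(0)}{\sqrt{\mathcal{A}(q(0))}}$ and $\frac{\mathrm{d}}{\mathrm{d}t}\widetilde{\mathcal{Q}}_F(W(t))=-\frac{\mathcal{Q}_F(W(t))}{\sqrt{\mathcal{A}(q(0))}}$, and exponential tail bounds from \eqref{w3w} and \eqref{cw-exp}. The only difference is bookkeeping: you evaluate the integrals on $[0,\infty)$ and subtract tails bounded directly through $|q(W)-q(0)|\lesssim W$, while the paper works on $[0,d_\eps/\eps]$ and estimates the boundary terms at $t=d_\eps/\eps$, which is equivalent and gives the same error rates.
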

The proof of Lemma~\ref{qw-lem} is given in Appendix~A (see Section~\ref{subap3}).

Note that \eqref{d-infi} implies $d_{\eps}\xrightarrow{\eps\downarrow0}0$ and $\frac{d_{\eps}}{\eps}\xrightarrow{\eps\downarrow0}\infty$. Thus, 
by  \eqref{d-infi}, \eqref{qw-1st}--\eqref{qw-3rd} and Lemma~\ref{qw-lem}, we have the following asymptotic expansions for each term in \eqref{qB*}:
\begin{equation}\label{stp-1}
  \begin{aligned}
 \int_{\Omega_{d_{\eps}}}& \left(q(W(\frac{\delta(x)}{\e}))-q(0)\right)\mathrm{d}x\\
 =&\,\eps|\partial\Omega|\left(\sqrt{\mathcal{A}(q(0))} \mathcal{Q}_F(b_*) +O(\eps^2)\right) \\ &\,-\,\eps^2(N-1)\int_{\partial\Omega}\mathcal{H}_{\partial\Omega}(\sigma)\, \mathrm{d}\sigma\left(\sqrt{\mathcal{A}(q(0))}\int_0^\infty \mathcal{Q}_F(W(t))\,\mathrm{d}t+O(\frac{d_{\eps}}{\eps}\mathrm{e}^{-M_0\frac{d_{\eps}}{\eps}})\right)\\
 =&\,\sqrt{\mathcal{A}(q(0))}\left(\eps|\partial\Omega| \mathcal{Q}_F(b_*)\,-\,\eps^2(N-1)\int_{\partial\Omega}\mathcal{H}_{\partial\Omega}(\sigma)\, \mathrm{d}\sigma\int_0^\infty \mathcal{Q}_F(W(t))\,\mathrm{d}t+O(\eps^3)\right),
\end{aligned}  
\end{equation}
\begin{equation}\label{stp-2}
    \begin{aligned}
    \int_{\Omega_{d_{\eps}}} q'(W(\frac{\delta(x)}{\e}))\Phi(\frac{\delta(x)}{\e})\,\mathrm{d}x=&\,\eps|\partial\Omega|\int_0^{\infty}q'(W(t))\Phi(t)\,\mathrm{d}t\\
    &-\,\eps^2(N-1)\int_{\partial\Omega}\mathcal{H}_{\partial\Omega}(\sigma)\, \mathrm{d}\sigma\int_0^{\infty}tq'(W(t))\Phi(t)\,\mathrm{d}t+O(\eps^3),
    \end{aligned}
\end{equation}
and
\begin{equation}\label{stp-3}
\begin{aligned}
 \int_{\Omega_{d_{\eps}}} &\, q'(W(\frac{\delta(x)}{\e}))\Psi(\frac{\delta(x)}{\e}) \mathcal{H}_{\Gamma_{\delta(x)}}(\sigma(x))\,\mathrm{d}x\\
=&\,\eps\left(\int_{\partial\Omega}\mathcal{H}_{\partial\Omega}(\sigma)\mathrm{d}\sigma+O(d_{\eps})\right) \left( \int_0^{\infty}q'(W(t))\Psi(t)\,  \mathrm{d}t  +O(\eps^2)\right)\\
&-\eps^2(N-1)\left(\int_{\partial\Omega}\mathcal{H}^2_{\partial\Omega}(\sigma)\, \mathrm{d}\sigma+O(d_{\eps})\right)\left(\int_0^{\infty}tq'(W(t))\Psi(t)\,\mathrm{d}t+O(\frac{d_{\eps}}{\eps}\mathrm{e}^{-\widetilde{M}\frac{d_{\eps}}{\eps}})\right).
\end{aligned}
\end{equation}

Note that, by \eqref{weyl} and \eqref{d-infi}, $|\Omega_{d_\varepsilon}|\ll\varepsilon^{\frac{2}{3}}$  as $\varepsilon \downarrow 0$. 
Combining \eqref{qB*} with \eqref{stp-1}--\eqref{stp-3}, we derive the asymptotic expansion of each term up to order $\varepsilon^2$, with the remaining contributions of order $\varepsilon^{\frac83}$ as follows:
\begin{equation}\label{*qB*}
{\begin{aligned}
&\left(B_\e-q(0)\right)|\Omega|\\
=&\,\int_{\Omega_{d_{\eps}}} \left(q(W(\frac{\delta(x)}{\e}))-q(0)\right)\mathrm{d}x +\frac{\mathcal{A}(q(0))-\mathcal{A}(B_\e)}{\left(\mathcal{A}(q(0))\right)^2}\int_{\Omega_{d_{\eps}}} q'(W(\frac{\delta(x)}{\e}))\Phi(\frac{\delta(x)}{\e})\,\mathrm{d}x \\
&+\eps (N-1) \int_{\Omega_{d_{\eps}}} q'(W(\frac{\delta(x)}{\e}))\Psi(\frac{\delta(x)}{\e}) \mathcal{H}_{\Gamma_{\delta(x)}}(\sigma(x))\,\mathrm{d}x + O(\e^2)|\Omega_{d_{\eps}}|\\
=&\,\eps\sqrt{\mathcal{A}(q(0))}\left(|\partial\Omega| \mathcal{Q}_F(b_*)\,-\,\eps(N-1)\int_{\partial\Omega}\mathcal{H}_{\partial\Omega}(\sigma)\, \mathrm{d}\sigma\int_0^\infty \mathcal{Q}_F(W(t))\,\mathrm{d}t+O(\eps^2)\right)\\
&+\eps\left(\frac{\mathcal{A}(q(0))-\mathcal{A}(B_\e)}{\left(\mathcal{A}(q(0))\right)^2}\right)\text{\scriptsize$\displaystyle\left[|\partial\Omega|\int_0^{\infty}q'(W(t))\Phi(t)\,\mathrm{d}t
    \,-\,\eps(N-1)\int_{\partial\Omega}\mathcal{H}_{\partial\Omega}(\sigma)\, \mathrm{d}\sigma\int_0^{\infty}tq'(W(t))\Phi(t)\,\mathrm{d}t+O(\eps^2)\right]$}\\
&+\eps^2(N-1)\left[\left(\int_{\partial\Omega}\mathcal{H}_{\partial\Omega}(\sigma)\mathrm{d}\sigma+O(d_{\eps})\right)  \int_0^{\infty}q'(W(t))\Psi(t)\,  \mathrm{d}t  +O(\eps)\right] +  O(\e^{\frac83}).
\end{aligned}}\end{equation}
\noindent
{\bf Step~2. Second- and third-order asymptotics of $B_{\varepsilon}$.}

We analyze each term in \eqref{*qB*}, recalling that $B_{\varepsilon}\to q(0)$ and $d_{\varepsilon}\to0$ as $\varepsilon\downarrow0$. 
Since $\mathcal{A}(B_\varepsilon)$ also appears in \eqref{*qB*}, it is natural to seek an expansion of $B_{\varepsilon}$ of the form
\begin{align}\label{b(i)}
B_{\varepsilon}
=
q(0)
+\boldsymbol{B}_{\boldsymbol{1}}\varepsilon
+\left(\boldsymbol{B}_{\boldsymbol{2}}+\boldsymbol{\mathrm{o}}_{\varepsilon}\right)\varepsilon^2,
\qquad 0<\varepsilon\ll1,
\end{align}
where $\boldsymbol{B}_{\boldsymbol{1}}$ and $\boldsymbol{B}_{\boldsymbol{2}}$ are constants independent of $\varepsilon$, and $\boldsymbol{\mathrm{o}}_{\varepsilon}\to0$ as $\varepsilon\downarrow0$ (see (N-iii)). 
We determine the coefficients $\boldsymbol{B}_{\boldsymbol{1}}$ and $\boldsymbol{B}_{\boldsymbol{2}}$ by matching the expansions in \eqref{*qB*}.

Since $\mathcal{A}\in\mathrm{C}^3(\mathbb{R};\mathbb{R}^+)$ is positive (cf. \eqref{q-A}), by \eqref{b(i)} one has
\begin{equation}\label{qAm}
  \begin{aligned}
 \mathcal{A}(B_{\eps})=&\,\mathcal{A}(q(0))+\mathcal{A}'(q(0))(B_{\eps}-q(0))+\frac{\mathcal{A''}(q(0))}{2}(B_{\eps}-q(0))^2+O((B_{\eps}-q(0))^3)   \\
 =&\,\mathcal{A}(q(0))\left[1+\frac{\mathcal{A}'(q(0))}{\mathcal{A}(q(0))}\boldsymbol{B}_{\boldsymbol{1}}\eps+\left(\frac{\mathcal{A}'(q(0))}{\mathcal{A}(q(0))}\boldsymbol{B}_{\boldsymbol{2}}+\frac{\mathcal{A}''(q(0))}{2\mathcal{A}(q(0))}\boldsymbol{B}_{\boldsymbol{1}}^2\right)\eps^2+O(\eps^3)\right],
 \end{aligned}  
\end{equation}
and
\begin{align}\label{mAq}
\frac{\mathcal{A}(q(0))-\mathcal{A}(B_\e)}{(\mathcal{A}(q(0)))^2}=-\frac{\mathcal{A}'(q(0))}{(\mathcal{A}(q(0)))^2}\boldsymbol{B}_{\boldsymbol{1}}\eps+O(\eps^2),
\end{align}
as $0<\eps\ll1$. Putting \eqref{b(i)} and \eqref{mAq} into \eqref{*qB*} and using \eqref{d-infi}, we arrive at
\begin{equation}\label{cqB}
\begin{aligned}
&\left(\boldsymbol{B}_{\boldsymbol{1}}\eps+(\boldsymbol{B}_{\boldsymbol{2}}+\boldsymbol{\mathrm{o}}_\eps)\eps^2\right)|\Omega|\\
&\quad=\eps\sqrt{\mathcal{A}(q(0))}\left(|\partial\Omega| \mathcal{Q}_F(b_*)\,-\,\eps(N-1)\int_{\partial\Omega}\mathcal{H}_{\partial\Omega}(\sigma)\, \mathrm{d}\sigma\int_0^\infty \mathcal{Q}_F(W(t))\,\mathrm{d}t\right)\\
&\quad\quad-\eps^2\frac{\mathcal{A}'(q(0))}{(\mathcal{A}(q(0)))^2}\boldsymbol{B}_{\boldsymbol{1}}|\partial\Omega|\int_0^{\infty}q'(W(t))\Phi(t)\,\mathrm{d}t\\
&\quad\quad+\eps^2(N-1)\int_{\partial\Omega}\mathcal{H}_{\partial\Omega}(\sigma)\mathrm{d}\sigma  \int_0^{\infty}q'(W(t))\Psi(t)\,  \mathrm{d}t+O(\eps^2d_{\eps})\\
    &\quad=\eps\sqrt{\mathcal{A}(q(0))}|\partial\Omega| \mathcal{Q}_F(b_*)+\eps^2\left[-\,(N-1)\sqrt{\mathcal{A}(q(0))}\int_{\partial\Omega}\mathcal{H}_{\partial\Omega}(\sigma)\, \mathrm{d}\sigma\int_0^\infty \mathcal{Q}_F(W(t))\,\mathrm{d}t\right.\\
    &\quad\quad-\frac{\mathcal{A}'(q(0))}{(\mathcal{A}(q(0)))^2}\boldsymbol{B}_{\boldsymbol{1}}|\partial\Omega|\int_0^{\infty}q'(W(t))\Phi(t)\,\mathrm{d}t\\
   &\quad\quad \left.+(N-1)\int_{\partial\Omega}\mathcal{H}_{\partial\Omega}(\sigma)\mathrm{d}\sigma  \int_0^{\infty}q'(W(t))\Psi(t)\,  \mathrm{d}t+O(d_{\eps})\right]\\
 &\quad=  \eps\sqrt{\mathcal{A}(q(0))}|\partial\Omega| \mathcal{Q}_F(b_*)\\
  &\quad\quad-\eps^2\left(\frac{\mathcal{A}'(q(0))}{(\mathcal{A}(q(0)))^2}\boldsymbol{B}_{\boldsymbol{1}}|\partial\Omega|\mathcal{I}_{W,\Phi}+(N-1)\int_{\partial\Omega}\mathcal{H}_{\partial\Omega}(\sigma)\mathrm{d}\sigma\mathcal{J}_{W,\Psi}+O(d_{\eps})\right), 
\end{aligned}\end{equation}
where $\mathcal{I}_{W,\Phi}$ and $\mathcal{J}_{W,\Psi}$ were defined by \eqref{I-J}. Consequently, \eqref{cqB} gives
\begin{align}\label{330-b1}
\boldsymbol{B}_{\boldsymbol{1}}=\sqrt{\mathcal{A}(q(0))} \mathcal{Q}_F(b_*)\frac{|\partial\Omega|}{|\Omega|},
\end{align}
and
\begin{align}\label{330-b2}
\boldsymbol{B}_{\boldsymbol{2}}=&\,-\frac{\mathcal{A}'(q(0))}{(\mathcal{A}(q(0)))^2}\boldsymbol{B}_{\boldsymbol{1}}\mathcal{I}_{W,\Phi}\frac{|\partial\Omega|}{|\Omega|}-(N-1)\mathcal{J}_{W,\Psi}\frac{\int_{\partial\Omega}\mathcal{H}_{\partial\Omega}(\sigma)\mathrm{d}\sigma}{|\Omega|}\notag\\
    =&\,-\frac{\mathcal{A}'(q(0))}{(\mathcal{A}(q(0)))^{\frac32}} \mathcal{Q}_F(b_*)\mathcal{I}_{W,\Phi}\frac{|\partial\Omega|^2}{|\Omega|^2}-(N-1)\mathcal{J}_{W,\Psi}\frac{\int_{\partial\Omega}\mathcal{H}_{\partial\Omega}(\sigma)\mathrm{d}\sigma}{|\Omega|}.
\end{align}
Along with \eqref{b(i)}, we obtain
\begin{equation}\label{bi*}
 \begin{aligned}
    B_{\eps}=&\,q(0)+\eps\sqrt{\mathcal{A}(q(0))} \mathcal{Q}_F(b_*)\frac{|\partial\Omega|}{|\Omega|}\\
    &\,-\eps^2\left(\frac{\mathcal{A}'(q(0))}{(\mathcal{A}(q(0)))^{\frac32}} \mathcal{Q}_F(b_*)\boldsymbol{\mathcal{I}}_{W,\Phi}\frac{|\partial\Omega|^2}{|\Omega|^2}+(N-1)\boldsymbol{\mathcal{J}}_{W,\Psi}\frac{\int_{\partial\Omega}\mathcal{H}_{\partial\Omega}(\sigma)\mathrm{d}\sigma}{|\Omega|}+\boldsymbol{\mathrm{o}}_\eps\right),
\end{aligned}   
\end{equation}
as $0<\eps\ll1$. Therefore, \eqref{q-three} follows from \eqref{b-eps} and \eqref{bi*}, which completes the proof of Theorem~\ref{thm-nonlocal}.

\section{\bf Proof of Theorem~\ref{mm-thm} and Corollary~\ref{cor-lm}}\label{sec-thm1.3}
We now present the proof of Theorem~\ref{mm-thm}. 
By \eqref{qAm} and \eqref{330-b1}--\eqref{330-b2}, we obtain
\begin{equation}\label{bigaq}
\begin{aligned}
\mathcal{A}\left(\fint_\Omega q(u)\,\mathrm{d}x \right)
= \mathcal{A}(q(0))
+\varepsilon\frac{|\partial\Omega|}{|\Omega|}  
\mathcal{A}'(q(0))\sqrt{\mathcal{A}(q(0))}\mathcal{Q}_F(b_*)
+O(\varepsilon^2),
\quad \text{as } \varepsilon\downarrow0.
\end{aligned}
\end{equation}
Substituting \eqref{bigaq} into \eqref{moon-1}--\eqref{Xi0} yields \eqref{thm-u(i)}.

It remains to prove~\eqref{thm-u(ii)}.
Recall that $U_\varepsilon$ is defined by \eqref{new-u}, namely,
\[ U_{\eps}(x)= u(x)-W(\frac{\delta(x)}{\eps})-\frac{\mathcal{A}(q(0))-\mathcal{A}(B_\e)}{\left(\mathcal{A}(q(0))\right)^2}\Phi(\frac{\delta(x)}{\e})- \e (N-1) \mathcal{H}_{\Gamma_{\delta(x)}}(\sigma(x))\Psi(\frac{\delta(x)}{\e})\]
and that $U_\eps$ satisfies the equation~\eqref{30U} and the boundary condition~\eqref{u-w-bd1}.
Moreover, $|U_\eps(x)|, |J_\eps (x)| \lesssim \eps^2$ in $\Omega_{d_*}$. (See the proof of Lemma \ref{lem2}.) Accordingly, we introduce the rescaled function
\[
\widetilde{U}_\varepsilon(y):=U_\varepsilon(\varepsilon y),
\qquad y\in \eps^{-1}\Omega.
\]
Then, by \eqref{30U} and~\eqref{u-w-bd1}, $\widetilde{U}_\varepsilon$ satisfies
\[
\Delta_y \widetilde{U}_{\varepsilon}(y)
= \frac{f'(W(\frac{\delta(\varepsilon y)}{\varepsilon}))}{\mathcal{A}(q(0))}
\widetilde{U}_{\varepsilon}(y)
+ J_{\varepsilon}(\varepsilon y),~\text{in}~\eps^{-1}\Omega_{d_*},
\quad 
\widetilde{U}_\varepsilon+\gamma \partial_{\vec{n}}\widetilde{U}_\varepsilon = 0
\quad \text{on } \partial(\eps^{-1}\Omega).
\]
We first consider the case $\gamma=0$.
For any $x_0 \in \partial\Omega$, applying the $\text{L}^p$-estimate with $p \in (1,\infty)$
(see \cite[Theorem~9.11]{GT2001}), we obtain 
\[\|\widetilde{U}_{\eps}\|_{\text{W}^{2,p}(B(y_0,R) \cap (\eps^{-1}\Omega_{d_*}))} \le C (\|\widetilde{U}_\eps\|_{\text{L}^p(B(y_0,2R) \cap (\eps^{-1}\Omega_{d_*}))} 
+ \|J_\eps(\eps \cdot)\|_{\text{L}^p(B(y_0,2R) \cap (\Omega_{d_*}/\eps))}) \lesssim \eps^2\]
for some constant $C$, independent of $x_0 \in \partial\Omega$ and small $\eps$, where $y_0=\frac{x_0}{\eps}$. 
By the Sobolev embedding, we have $|\nabla_y \widetilde{U}_\eps (y)| = O(\eps^2)$ for $y \in \eps^{-1}(\Omega_{d^*}\setminus \Omega_{\eps d^*})$.
Thus, it holds that $|\nabla U_\eps(x)|=O(\eps)$ for $x \in \Omega_{d^*}\setminus \Omega_{\eps d^*}$.

Now we assume $\gamma>0$. First, we recall that $u=u_\eps \in \text{W}^{2,p}(\Omega)$ for any $p \in (1,\infty)$ by \cite[Theorem~6.30]{L2013}. 
Moreover, from the $\text{L}^p$-estimates (see \cite[Theorem 6.27]{L2013}), we get an estimate
\[\|D^2\widetilde{U}_{\eps}\|_{L^{p}(B(y_0,R) \cap (\Omega_{d_*}/\eps))} \le C (\|\widetilde{U}_\eps\|_{\text{L}^p(B(y_0,2R) \cap (\Omega_{d_*}/\eps))} 
+ \|J_\eps(\eps \cdot)\|_{\text{L}^p(B(y_0,2R) \cap (\Omega_{d_*}/\eps))}) \lesssim \eps^2\]
for some constant $C$, independent of $x_0 \in \partial\Omega$ and small $\eps>0$.  Thus, $|\nabla U_\eps (x)|=O(\eps)$ for $x \in \Omega_{\eps d_*}$  
and hence \eqref{thm-u(ii)} also holds for $\gamma>0$.
This completes the proof of Theorem~\ref{mm-thm}.

\section{\bf Appendix: Proof of Lemmas~\ref{lem-W},   \ref{lem-PhiPsi} and \ref{qw-lem}}\label{sec-ap}

 In this section, we present the proof of Lemmas~\ref{lem-W}, \ref{lem-PhiPsi} and \ref{qw-lem} for the sake of completeness.

\subsection{Proof of Lemma~\ref{lem-W}}\label{subap1}

It suffices to consider the case $\gamma>0$. Define the energy functional 
$\mathcal{E}_0: \mathrm{H}^1(\mathbb{R}_{+}) \to \mathbb{R}$ by
\[
\mathcal{E}_0[w]:=  \int_0^{\infty} \left(\frac{| w'|^2}{2}
+\frac{F(w)}{\mathcal{A}(q(0))} \right)\mathrm{d}t 
+\frac{1}{2\gamma} (w(0)-b_0)^2.
\]
Since $\mathcal{A}(q(0))>0$, assumptions \eqref{big-F} and \eqref{assume-f} 
imply that $\mathcal{E}_0$ is nonnegative and strictly convex on 
$\mathrm{H}^1(\mathbb{R}_{+})$. Consequently, by the direct method in the calculus 
of variations, $\mathcal{E}_0$ admits a unique minimizer $W \in \mathrm{H}^1(\mathbb{R}_{+}),$
which is a weak solution of \eqref{ode-W}. Moreover, by standard regularity theory for second-order equations, 
$W$ is smooth and hence a classical solution of \eqref{ode-W}. 
In particular, $W \in \mathrm{L}^\infty(\mathbb{R}_{+})$, 
and the solution is unique.

Note that \eqref{assume-f} together with $b_0>0$ and $\gamma\geq0$ holds. Therefore, by the strong maximum principle, we have $0< W \le b_0$ on $[0,\infty)$. Combining this with \eqref{assume-f} again, we deduce that $W$ is strictly convex and monotonically decreasing on $(0,\infty)$.

We now verify \eqref{w1w}. By multiplying \eqref{ode-W} by $W'$ and integrating, we obtain
\[
W'^2(t)=  \frac{2F(W(t))}{\mathcal{A}(q(0))}.
\]
Here we have used the facts that $|W(t)|+|W'(t)|\to0$ as $t\to\infty$ and that $F(0)=0$. Since $W'\leq0<W$ on $[0,\infty)$, it follows that
\begin{align}\label{w2w}
    W'(t)=-\sqrt{\frac{2F(W(t))}{\mathcal{A}(q(0))}}<0,\quad t\in[0,\infty).
\end{align}
Using \eqref{b0}, \eqref{w2w}, and the boundary condition of $W$ in \eqref{ode-W}, we obtain
\[
W(0)=b_* \quad\text{and}\quad W'(0)=-\sqrt{\frac{2F(b_*)}{\mathcal{A}(q(0))}}.
\]
Here we have used the fact that $b_*$ is the unique solution of \eqref{b0}, since $F$ is strictly increasing on $\mathbb{R}$. This proves \eqref{w1w}.

It remains to verify \eqref{w3w}. By \eqref{w2w} and $W(0)=b_*$, we have $W(t)\in(0,b_*]$ for all $t\in[0,\infty)$. Moreover, by \eqref{big-F} and \eqref{assume-f},
\[
F(W)\ge \left(\frac12\min_{[0,b_*]}f'\right)W^2>0.
\]
Together with \eqref{w2w}, this yields
\[
W'(t)+\left(\min_{[0,b_*]}\sqrt{\frac{f'}{\mathcal{A}(q(0))}}\right)W(t)\leq0,
\quad \forall\,t\geq0.
\]
Combining this inequality with $W(0)=b_*$, we obtain the estimate for $W$ in \eqref{w3w}. Furthermore, by \eqref{big-F} and \eqref{assume-f}, we have
\[
0\leq F(W(t))\leq (\max_{[0,b_*]}f)W(t)=f(b_*)W(t).
\]
Together with \eqref{w2w}, this gives the estimate for $W'$ in \eqref{w3w} and completes the proof of Lemma~\ref{lem-W}.
\subsection{Proof of Lemma~\ref{lem-PhiPsi}}\label{subap2}
The existence and uniqueness of classical solutions $\Phi$ and $\Psi$ follow from standard arguments, since equations \eqref{Phi-new} and \eqref{Psi-new} are linear and the variable coefficient $\frac{f'(W)}{\mathcal{A}(q(0))}$ is positive and uniformly bounded on $[0,\infty)$ by \eqref{assume-f} and Lemma~\ref{lem-W}. Therefore, we omit the details.

We now prove \eqref{iph-1}. By \eqref{Phi-new}, one can verify that
\[
-\frac12 \Phi'^2(t)
= \int_t^\infty \Phi''(s)\Phi'(s)\,\mathrm{d}s
= \int_t^\infty \left(\frac{f'(W(s))}{\mathcal{A}(q(0))}\Phi(s)\Phi'(s)
+ f(W(s))\Phi'(s)\right)\mathrm{d}s.
\]
On the other hand, by \eqref{ode-W} and \eqref{Phi-new}, we have
\begin{equation*}\begin{aligned}
-\frac12 \left(\Phi'(t)- \mathcal{A}(q(0)) W'(t)\right)^2
=& \int_t^\infty (\Phi- \mathcal{A}(q(0))W)''(s)
(\Phi- \mathcal{A}(q(0))W)'(s)\,\mathrm{d}s \\
=& \int_t^\infty \frac{f'(W(s))}{\mathcal{A}(q(0))} \Phi(s)
\left(\Phi'(s)- \mathcal{A}(q(0)) W'(s)\right)\,\mathrm{d}s \\
=& f(W(t))\Phi(t)
+ \int_t^\infty \left(\frac{f'(W(s))}{\mathcal{A}(q(0))}\Phi(s)\Phi'(s)
+ f(W(s))\Phi'(s)\right)\mathrm{d}s.
\end{aligned}\end{equation*}
Combining the above identities, we obtain
\[
\frac12 \Phi'^2(t)
-\frac12 \left(\Phi'(t)- \mathcal{A}(q(0)) W'(t)\right)^2
= f(W(t))\Phi(t),
\]
that is,
\begin{align}\label{1208-1}
\Phi'(t)-\frac{\mathcal{A}(q(0))}{2}W'(t)
= \frac{f(W(t))}{\mathcal{A}(q(0))W'(t)}\Phi(t).
\end{align}
Here we have used the fact that $W'(t)\neq0$ for all $t\in[0,\infty)$. Hence, \eqref{iph-1} follows immediately from \eqref{w1w} and \eqref{1208-1}.

Recall that $W(0)=b_*$. Setting $t=0$ in \eqref{iph-1} and using the boundary condition of $\Phi$ at $t=0$, we can derive \eqref{iph-2}. In particular, \eqref{iph-1} and \eqref{iph-2} imply that $\Phi<0$ and is uniformly bounded on $[0,\infty)$. Indeed, \eqref{iph-2} yields $\Phi(0)<0$, and $\Phi$ is decreasing in a neighborhood of $t=0$. Suppose, by contradiction, that $\Phi$ attains a nonnegative maximum at some interior point $t_M\in(0,\infty)$. Then, by \eqref{iph-1},
\[
0\leq \frac{f(W(t_M))}{\sqrt{2\mathcal{A}(q(0))F(W(t_M))}}\Phi(t_M)
= -\sqrt{\frac{\mathcal{A}(q(0))F(W(t_M))}{2}}<0,
\]
since $0<W(t_M)\leq b_*$ implies $F(W(t_M))>0$. This contradiction shows that $\Phi<0$ on $[0,\infty)$. Consequently, $\Phi$ attains its negative minimum at an interior point. Combining this fact with \eqref{w1w}--\eqref{w3w} and \eqref{iph-1}, we conclude that $\Phi'$ is uniformly bounded on $[0,\infty)$. This completes the proof of (i).

We next prove \eqref{ips-1}. By \eqref{ode-W}, \eqref{Psi-new}, and \eqref{w1w}, we observe that
\begin{align*}
0=& \int_t^\infty \left(W''(s)-\frac{f(W(s))}{\mathcal{A}(q(0))}\right)\Psi'(s)\,\mathrm{d}s\\
=& -W'(t)\Psi'(t)
-\int_t^\infty \left(W'(s)\Psi''(s)
+ \frac{f(W(s))}{\mathcal{A}(q(0))}\Psi'(s)\right)\mathrm{d}s \\
=& -W'(t)\Psi'(t)
-\int_t^\infty \left[\left(\frac{f(W(s))}{\mathcal{A}(q(0))}\Psi(s)\right)'
-\sqrt{\frac{2F(W(s))}{\mathcal{A}(q(0))}}W'(s)\right]\mathrm{d}s \\
=& \sqrt{\frac{2F(W(t))}{\mathcal{A}(q(0))}}\Psi'(t)
+ \frac{f(W(t))}{\mathcal{A}(q(0))}\Psi(t)
- \int_0^{W(t)}\sqrt{\frac{2F(s)}{\mathcal{A}(q(0))}}\,\mathrm{d}s,
\end{align*}
where we have used the facts that $|\Psi(t)|+|\Psi'(t)|\to0$ as $t\to\infty$ and
\[
\int_t^\infty \sqrt{\frac{2F(W(s))}{\mathcal{A}(q(0))}} W'(s)\,\mathrm{d}s
= \int_{W(t)}^0 \sqrt{\frac{2F(s)}{\mathcal{A}(q(0))}}\,\mathrm{d}s.
\]
Consequently,
\[
\Psi'(t)
+ \frac{f(W(t))}{\sqrt{2\mathcal{A}(q(0))F(W(t))}}\Psi(t)
= \sqrt{\frac{\mathcal{A}(q(0))}{2F(W(t))}}
\int_0^{W(t)}\sqrt{\frac{2F(s)}{\mathcal{A}(q(0))}}\,\mathrm{d}s,
\]
which gives \eqref{ips-1}.

Setting $t=0$ in \eqref{ips-1} and using the boundary condition of $\Psi$ at $t=0$, we obtain \eqref{ips-2}. Moreover, since $f(W(t))>0$ and $G(W(t))>0$ on $[0,\infty)$, a similar argument to that in (i) shows that $\Psi>0$ on $[0,\infty)$ and that $\sup_{[0,\infty)}(|\Psi|+|\Psi'|)<\infty$.

It remains to prove \eqref{cw-exp}. Note that $-\Phi>0$ on $[0,\infty)$. Applying \eqref{w1w} to \eqref{iph-1}, we obtain
\begin{align}\label{12-esphi}
(-\Phi)'(t)+M_1(-\Phi(t))
\leq \sqrt{\frac{b_*f(b_*)\mathcal{A}(q(0))}{2}}
\mathrm{e}^{-\frac{M_0}{2}t},
\quad t\geq0,
\end{align}
where
\[
M_1:=\inf_{(0,b_*]}\frac{f}{\sqrt{2\mathcal{A}(q(0))F}}>0,
\]
since $\lim_{t\to0^+}\frac{f(t)}{\sqrt{2\mathcal{A}(q(0))F(t)}}
=\sqrt{\frac{f'(0)}{\mathcal{A}(q(0))}}>0$.
From \eqref{12-esphi}, we have
\[
(-\Phi(t)\mathrm{e}^{M_1t})'
\leq \sqrt{\frac{b_*f(b_*)\mathcal{A}(q(0))}{2}}
\mathrm{e}^{(M_1-\frac{M_0}{2})t}.
\]
Assuming $M_1-\frac{M_0}{2}\neq0$, this yields
\begin{align}\label{adepphi}
0<-\Phi(t)
\leq \widehat{C}\mathrm{e}^{-\min\{M_1,\frac{M_0}{2}\}t},
\quad t\geq0,
\end{align}
with
\[
\widehat{C}
=|\Phi(0)|
+\frac{\sqrt{2b_*f(b_*)\mathcal{A}(q(0))}}{|2M_1-M_0|}.
\]
Together with \eqref{iph-1}, this implies
$|\Phi'(t)|\leq \widehat{C'}\mathrm{e}^{-\min\{M_1,\frac{M_0}{2}\}t}$
for some constant $\widehat{C'}>\widehat{C}$. Applying the same argument to \eqref{ips-1}--\eqref{ips-2}, we obtain
\[
\Psi(t)+|\Psi'(t)|
\leq \widehat{C''}\mathrm{e}^{-\min\{M_1,\frac{M_0}{2}\}t}
\]
for some positive constant $\widehat{C''}$. Combining these estimates yields \eqref{cw-exp} and completes the proof of Lemma~\ref{lem-PhiPsi}.
\begin{remark}\label{rk-ph}
One can solve linear equations \eqref{iph-1}--\eqref{iph-2} and \eqref{ips-1}--\eqref{ips-2}  with simple calculations as follows:
\begin{align}
   \Phi(t)=&\, -\frac{\gamma \mathcal{A}(q(0)) F(b_*)\mathrm{e}^{-\mathfrak{X}(t)}}{\gamma f(b_*)+\sqrt{2\mathcal{A}(q(0)) F(b_*)}}-\int_0^t\sqrt{\frac{\mathcal{A}(q(0))F(W(r))}{2}}\mathrm{e}^{\mathfrak{X}(r)-\mathfrak{X}(t)}~\mathrm{d}r,\label{ish}\\
   \Psi(t)= &\,\frac{\gamma\mathcal{A}(q(0))G(b_*)\mathrm{e}^{-\mathfrak{X}(t)}}{\gamma f(b_*)+ \sqrt{2\mathcal{A}(q(0))F(b_*)}}+\int_0^t\sqrt{\frac{\mathcal{A}(q(0))}{2F(W(r))}}G(W(r))\mathrm{e}^{\mathfrak{X}(r)-\mathfrak{X}(t)}~\mathrm{d}r,
   \label{ihs}
\end{align}
where $\mathfrak{X}(r)=\int_0^r\frac{f(W(s))\,\mathrm{d}s}{\sqrt{2\mathcal{A}(q(0))F(W(s))}}$.
\end{remark}
\subsection{Proof of Lemma~\ref{qw-lem}}\label{subap3}
By \eqref{Q-F} and \eqref{w1w}, we compute
\begin{equation*}
\begin{aligned}
\int_0^{\frac{d_{\eps}}{\eps}} \left(q(W(t))- q(0) \right)\,\mathrm{d}t
=& -\int_0^{\frac{d_{\eps}}{\eps}} \left(q(W(t))-q(0) \right)
\sqrt{\frac{\mathcal{A}(q(0))}{2F(W(t))}}W'(t)\,\mathrm{d}t \\
=& \int_{W(\frac{d_{\eps}}{\eps})}^{b_*}
\left(q(s)-q(0)\right)
\sqrt{\frac{\mathcal{A}(q(0))}{2F(s)}}\,\mathrm{d}s \\
=& \sqrt{\mathcal{A}(q(0))}
\left(\mathcal{Q}_F(b_*)
-\int_{0}^{W(\frac{d_{\eps}}{\eps})}
\frac{q(s)-q(0)}{\sqrt{2F(s)}}\,\mathrm{d}s \right) \\
=& \sqrt{\mathcal{A}(q(0))}\,\mathcal{Q}_F(b_*)
+O\!\left(\mathrm{e}^{-M_0\frac{d_{\eps}}{\eps}}\right),
\end{aligned}
\end{equation*}
which gives \eqref{0qW}. Here we have used the fact that $W(t)\in(0,b_*]$ and that
\[
\sup_{s\in(0,b_*]}
\left|\frac{q(s)-q(0)}{\sqrt{2F(s)}}\right|<\infty,
\]
since by \eqref{big-F} and \eqref{assume-f} one has
\[
\frac{q(s)-q(0)}{\sqrt{2F(s)}}\xrightarrow{s\to0^+}
\frac{q'(0)}{\sqrt{f'(0)}}.
\]
This also implies the uniform boundedness of $\mathcal{Q}_F'(W(t))$ on $[0,\infty)$. Consequently,
\[
\left|\int_{0}^{W(\frac{d_{\eps}}{\eps})}
\frac{q(s)-q(0)}{\sqrt{2F(s)}}\,\mathrm{d}s\right|
\lesssim |W(\tfrac{d_{\eps}}{\eps})|.
\]

Next, we compute
\begin{equation}\label{1twq}
\begin{aligned}
\int_0^{\frac{d_{\eps}}{\eps}} t\left(q(W(t))-q(0)\right)\,\mathrm{d}t
=& -\sqrt{\mathcal{A}(q(0))}
\int_0^{\frac{d_{\eps}}{\eps}}
\frac{t\left(q(W(t))-q(0)\right)}{\sqrt{2F(W(t))}}W'(t)\,\mathrm{d}t \\
=& -\sqrt{\mathcal{A}(q(0))}
\int_0^{\frac{d_{\eps}}{\eps}}
t\left(\mathcal{Q}_F(W(t))\right)' \mathrm{d}t \\
=& \sqrt{\mathcal{A}(q(0))}
\int_0^\infty \mathcal{Q}_F(W(t))\,\mathrm{d}t
+O\!\left(\frac{d_{\eps}}{\eps}\mathrm{e}^{-M_0\frac{d_{\eps}}{\eps}}\right),
\end{aligned}
\end{equation}
and
\begin{equation}\label{2twq}
\begin{aligned}
&\int_0^{\frac{d_{\eps}}{\eps}} t^2\left(q(W(t))- q(0) \right)\,\mathrm{d}t\\
=& -\sqrt{\mathcal{A}(q(0))}
\int_0^{\frac{d_{\eps}}{\eps}} t^2
\left(\mathcal{Q}_F(W(t))\right)' \mathrm{d}t \\
=& -\sqrt{\mathcal{A}(q(0))}
\Bigg(
\frac{d_{\eps}^2}{\eps^2}\mathcal{Q}_F\!\left(W(\tfrac{d_{\eps}}{\eps})\right)
+2\int_0^{\frac{d_{\eps}}{\eps}}
t\mathcal{Q}_F(W(t))
\sqrt{\frac{\mathcal{A}(q(0))}{2F(W(t))}}W'(t)\,\mathrm{d}t
\Bigg) \\
=&~ O\left(\frac{d_{\eps}^2}{\eps^2}
\mathrm{e}^{-M_0\frac{d_{\eps}}{\eps}}\right)
-2\mathcal{A}(q(0))
\int_0^{\frac{d_{\eps}}{\eps}}
t\left(\widetilde{\mathcal{Q}}_F(W(t))\right)' \mathrm{d}t \\
=& ~O\left(\frac{d_{\eps}^2}{\eps^2}
\mathrm{e}^{-M_0\frac{d_{\eps}}{\eps}}\right)
+2\mathcal{A}(q(0))
\int_0^{\infty}\widetilde{\mathcal{Q}}_F(W(t))\,\mathrm{d}t.
\end{aligned}
\end{equation}
These yield \eqref{1qW} and \eqref{2qW}. Here we have used \eqref{Q-F} and \eqref{w1w} to verify that
\[
|\mathcal{Q}_F(W(t))|
\leq b_*\!\left(\sup_{(0,b_*]} |\mathcal{Q}_F'|\right)
\mathrm{e}^{-M_0t},
\quad
|\widetilde{\mathcal{Q}}_F(W(t))|
\leq b_*\!\left(\sup_{(0,b_*]}
\frac{|\mathcal{Q}_F|}{\sqrt{2F}}\right)
\mathrm{e}^{-M_0t}.
\]
The remainder estimates in \eqref{1twq} and \eqref{2twq} then follow from the identities
\[
\int_0^{\frac{d_{\eps}}{\eps}} t\left(\mathcal{Q}_F(W(t))\right)' \mathrm{d}t
= \frac{d_{\eps}}{\eps}\mathcal{Q}_F\!\left(W(\tfrac{d_{\eps}}{\eps})\right)
-\int_0^{\frac{d_{\eps}}{\eps}} \mathcal{Q}_F(W(t))\,\mathrm{d}t,
\]
and
\[
\int_0^{\frac{d_{\eps}}{\eps}} t\left(\widetilde{\mathcal{Q}}_F(W(t))\right)' \mathrm{d}t
= O\!\left(\frac{d_{\eps}}{\eps}\mathrm{e}^{-M_0\frac{d_{\eps}}{\eps}}\right)
-\int_0^{\infty}\widetilde{\mathcal{Q}}_F(W(t))\,\mathrm{d}t.
\]
Note that $\frac{|\mathcal{Q}_F(s)|}{\sqrt{2F(s)}}\to\frac{|q'(0)|}{f'(0)}$ as $s\to0^+$.

It remains to prove part (ii). By \eqref{w3w} and \eqref{cw-exp}, we have
\begin{align*}
\left|\int_{\frac{d_{\eps}}{\eps}}^{\infty}
t^k q'(W(t))\Phi(t)\,\mathrm{d}t\right|
\leq \widetilde{C}
\left(\sup_{(0,b_*]}|q'|\right)
\int_{\frac{d_{\eps}}{\eps}}^{\infty}
t^k\mathrm{e}^{-\widetilde{M}t}\,\mathrm{d}t
= O\!\left(\frac{d_{\eps}^k}{\eps^k}
\mathrm{e}^{-\widetilde{M}\frac{d_{\eps}}{\eps}}\right),
\end{align*}
which proves \eqref{kqh}. The estimate \eqref{kqs} follows from the same argument, and the proof of Lemma~\ref{qw-lem} is complete.

\subsection*{\bf Acknowledgments.} 
The research of C.-C. Lee was partially supported by the grant 114-2115-M-007-013-MY2 of the Ministry of Science and Technology of Taiwan.  
S.H. Moon was supported by the National Research Foundation of Korea grant funded by the Ministry of Science and ICT (No. RS-2022-NR072398). W. Yang is supported by National Key R\&D Program of China 2022YFA1006800, NSFC No. 12271369 and 12531010, FDCT No. 0070/2024/RIA1, Start-up Research Grant No. SRG2023-00067-FST, Multi-Year Research Grant No. MYRG-GRG2024-00082-FST-UMDF, MYRG-GRG2025-00051-FST and UMDF No. TISF/2025/006/FST.

\end{document}